\documentclass[11pt]{article}
\newcommand{\blind}{1}
\RequirePackage{graphicx}
\usepackage{amsfonts}
\usepackage{bbm}
\usepackage{comment}
\usepackage{natbib}
\usepackage{mathtools}
\usepackage[left=1in,right=1in,top=1in,bottom=1in]{geometry}
\usepackage{pgfplots}
\pgfplotsset{compat=1.16} 
\usepackage[all]{nowidow}
\usepackage[utf8]{inputenc}
\usepackage{tikz}
\usepackage{multicol}
\usepackage{algpseudocode,algorithm,algorithmicx}

\usepackage{scalerel}
\usepackage[normalem]{ulem}

\usepackage[english]{babel}
\usepackage{caption}

\usepackage{mathdots}
\usepackage{yhmath}
\usepackage{cancel}
\usepackage{color}
\usepackage{array}
\usepackage{multirow}
\usepackage{enumerate}
\usepackage{gensymb}
\usepackage{tabularx}
\usepackage{booktabs}
\usetikzlibrary{fadings}
\usetikzlibrary{patterns}
\usetikzlibrary{shadows.blur}

\usepackage{amssymb}
\usepackage{amsmath}    
\usepackage{dirtytalk}
\usepackage{amsthm}

\newcommand{\R}{\mathbb{R}}

\newcommand{\one}{\mathbf 1}

\newcommand{\I}{\mathcal{I}}

\newcommand{\cI}{\mathcal{I}}

\newcommand{\cS}{\mathcal{S}}

\newcommand{\bP}{\mathbb{P}}
\newcommand{\bR}{\mathbb{R}}

\DeclareMathOperator*{\argmin}{arg\,min}

\theoremstyle{plain}

\newtheorem{theorem}{Theorem}[section]
\newtheorem{lemma}[theorem]{Lemma}
\newtheorem{proposition}{Proposition}
\theoremstyle{definition}
\newtheorem{definition}[theorem]{Definition}
\newtheorem{assumption}{Assumption}

\newtheorem{remark}{Remark}
\theoremstyle{remark}

\usepackage[nodisplayskipstretch]{setspace}

\usepackage[colorlinks,citecolor=blue,urlcolor=blue]{hyperref}
\begin{document}
\date{}

\def\spacingset#1{\renewcommand{\baselinestretch}%
{#1}\small\normalsize} \spacingset{1}


\if1\blind
{
  \title{An Exact Pointwise Characterization for Total Variation Denoising in Quantile Regression}
  \author{Deep Ghoshal and Sabyasachi Chatterjee \\
    Department of Statistics, University of Illinois at Urbana-Champaign
  }
  \maketitle
}
\fi

\bigskip


\begin{abstract}
Total variation denoising (TVD) is a classical method for denoising and curve fitting, yet an explicit pointwise description of its fitted values has only recently been established in the mean regression setting by \cite{chatterjee2026minmaxtrendfilteringgeneralizations}. This raises the question of whether a similar representation holds for quantile regression.

 We answer this question affirmatively by deriving an exact minmax/maxmin representation for the quantile TVD estimator, providing a complete pointwise characterization of its solution set. Given that the quantile TVD estimator is generally non-unique, the existence of such a representation is perhaps surprising. We show that the set of admissible fitted values at any location forms a compact interval, whose endpoints are characterized exactly by minmax/maxmin functionals of local order statistics over nested intervals.

 We next develop several structural properties of the quantile TVD solution set. First, the solution set is closed under coordinatewise maximum and minimum, guaranteeing the existence of extremal elements—upper and lower envelope solutions. Second, this reveals that quantile TVD is intrinsically non-crossing across quantile levels when a common tuning parameter is used. We prove this is driven by submodularity of the total variation penalty, and show that any penalized quantile regression estimator with a submodular penalty enjoys this property.

 From an estimation error perspective, our representation enables a refined pointwise analysis via a transparent local bias-variance decomposition, facilitating new pointwise risk bounds and near-optimal rates for locally H\"{o}lder smooth functions. Our results hold under heavy-tailed noise (e.g., Cauchy) and substantially extend existing guarantees beyond locally constant signals. Altogether, these results advance the theory of quantile TV regression via exact pointwise min-max representations. 
\end{abstract}

\noindent%
{\it Keywords:}
Quantile regression; Total variation denoising; Minmax representation; Order statistics; Quantile crossing; Submodularity; Pointwise risk bound; H\"{o}lder smoothness.

\spacingset{1}
\section{Introduction}\label{sec:intro}
\subsection{Pointwise Characterization of Quantile Total Variation Denoising}\label{sec:intro_qtltvd}
In this article, we revisit the univariate quantile total variation denoising (TVD) estimator in the setting of nonparametric quantile regression. Since the seminal work of \cite{koenker1978regression} on parametric quantile regression, the field has witnessed extensive development in both statistics and economics. Owing to its inherent robustness, quantile regression has become a powerful tool for modeling and prediction. For foundational contributions to nonparametric quantile regression, see, for example, \cite{utreras1981computing, cox1983asymptotics, koenker1994quantile, chaudhuri1991nonparametric}.

Total variation regularization has a long and rich history in signal processing and statistics, beginning with the seminal work of~\cite{rudin1992nonlinear} on image denoising and its statistical counterpart in the fused lasso~\cite{tibshirani2005sparsity}. A continuous analogue of the TVD estimator—penalizing the $L_1$ norm of the derivative—was introduced earlier in the statistics literature by \cite{mammen1997locally} under the name \emph{locally adaptive regression splines}. In this work, we focus on total variation regularization in the context of univariate quantile regression.

The quantile TVD estimator, for any vector $y \in \R^n$, is defined as any element of the solution set
\begin{equation}\label{eqn:defn_qtl_TVD}
\hat{\theta}^{\tau}\in\cS^{\tau}:=\argmin_{\theta \in \bR^n}\left\{\sum_{i=1}^{n}\rho_{\tau}(y_i-\theta_i)+\lambda\,TV(\theta)\right\},
\end{equation}
where $\rho_{\tau}(x)=\max\{\tau x, (\tau-1)x\}$ is the standard convex piecewise linear quantile loss, $\lambda>0$ is a tuning parameter, and $TV(\theta):=\sum_{i=1}^{n-1}|\theta_{i+1}-\theta_i|$ denotes the total variation of $\theta$. Here, $\argmin$ denotes the (possibly non-singleton) set of minimizers.

A key feature of the TV penalty is that it promotes sparsity in first-order differences, thereby producing piecewise constant estimates. This property makes univariate TVD particularly well-suited for detecting structural breaks and denoising signals with piecewise constant structure.

The statistical properties of the mean regression version of TVD—obtained by replacing the quantile loss $\rho_{\tau}$ with the squared loss $x \mapsto x^2$—have been extensively studied; see, for example, \cite{Tibshirani2013AdaptivePP, tibshirani2022divided, guntuboyina2020adaptive, dalalyan2017prediction, harchaoui2010multiple, lin2017sharp, ortelli2018total, ortelli2021prediction, sadhanala2019additive, sadhanala2024multivariate, chatterjee2026minmaxtrendfilteringgeneralizations}. In contrast, the quantile version of TVD has received comparatively less attention in the literature.

In the setting of quantile regression, a class of related estimators, known as \emph{quantile smoothing splines}, was introduced by \cite{koenker1994quantile}. These estimators are curve-based and are defined as
\[
\mathop{\mathrm{minimize}}_{f \in \mathcal{F}} \left\{ \sum_{i=1}^n \rho_\tau \bigl( y_i - f(x_i) \bigr) + \lambda \left( \int_0^1 |f''(x)|^p \, dx \right)^{1/p} \right\},
\]
where $0 < x_1 < \dots < x_n < 1$, $\lambda > 0$ is a tuning parameter, $p \ge 1$, and $\mathcal{F}$ is a suitable function class. 

If one replaces the second derivative by the first derivative and sets $p=1$, the resulting estimator can be viewed as a continuous analogue of the quantile TVD estimator defined in~\eqref{eqn:defn_qtl_TVD}. In fact, when $p=1$, the corresponding quantile smoothing spline coincides with the quantile version of locally adaptive regression splines of order $2$, introduced by \cite{mammen1997locally}.

To the best of our knowledge, the quantile TVD estimator was first explicitly introduced by \cite{li2007analysis} in the context of detecting DNA copy number variations in genomic data. Since then, only a limited number of works have studied quantile TVD; see, for example, \cite{madrid2022risk, Brantley2020, feng2024deep, zhang2025quantile, madrid2024quantile}.


Despite the long history of the univariate TVD estimator in the mean regression setting, an explicit pointwise description of its fitted values remained unavailable until recently. In a recent work, \cite{chatterjee2026minmaxtrendfilteringgeneralizations} derived an exact pointwise representation of the estimator, showing that each fitted value can be expressed as a min--max/max--min of simple functions of local averages over nested intervals containing the target point. This representation has enabled the analysis of pointwise risk for the TVD estimator, which was previously inaccessible using existing techniques.

A natural question is whether a similar representation holds for the quantile TVD estimator. However, a key distinction in the quantile setting is that the estimator is generally non-unique, as the objective function in~\eqref{eqn:defn_qtl_TVD} may admit multiple minimizers. In view of this intrinsic non-uniqueness, it is not \emph{a priori} clear in what sense a comparable pointwise characterization could hold.

The main contribution of this article is to resolve this question in the affirmative. We establish a novel, exact pointwise characterization of the quantile TVD estimator. Specifically, we show that the set of feasible fitted values at any given location forms a compact interval in $\mathbb{R}$, and we characterize the endpoints of this interval explicitly via min--max/max--min functionals of local order statistics over nested intervals containing the target point.

 In order to state the central result of this paper, we first set up the following convention about order statistics. For two integers $a\leq b$, let us denote the discrete interval $\{a,\ldots,b\}$ by $[a:b]$ and for any integer $n$, let us denote the set $\{1,\ldots,n\}$ by $[n]$.  

\begin{definition}\label{defn:Y_I,k}
    For any discrete interval $I \subseteq [1:n]$ and any integer $k$, let us define $y_{I,(k)}$ as
    \[y_{I,(k)}:=\begin{cases}
     k\text{-th smallest element of }y_I & \text{ if } 1\leq k \leq |I|;\\
     \infty & \text{ if } k\geq |I|+1;\\
     -\infty & \text{ if } k\leq 0.
 \end{cases}\]
\end{definition}

We now present the main contribution of this paper in the following theorem.

\begin{theorem}\label{thm:minmax_bound}
    Fix any $\tau\in (0,1)$, $\lambda\geq 0$ and a location $i\in [n]$. Let us define $\cS^{\tau}_i$ to be the set of all feasible values of the estimator defined in \eqref{eqn:defn_qtl_TVD} at the location $i$, i.e., 
    \[\mathcal{S}^{\tau}_i:=\{\theta_i\;:\theta \in \cS^{\tau}\}.\]
    Let $\I$ denote the set of all discrete sub-intervals of $[1:n]$. 
    Then, the following hold:
    \begin{enumerate}
        \item [(1)] (Exact Pointwise Characterization) \(\cS^{\tau}_i\) is a compact interval in $\bR$ given by \([L_i^{\tau},U_i^{\tau}]\), where
        \begin{equation} \label{eqn:minmax_bound_qtl}
  L^{\tau}_i:=\max_{J\in \I: i \in J}\min_{I\in \I: I\subseteq J,\, i \in I}\,y_{I,(\lceil l^{\tau}_{I,J}\rceil)},\qquad U^{\tau}_i:= \min_{J\in \I: i \in J}\max_{I\in \I: I\subseteq J,\, i \in I}\, y_{I,(\lfloor u^{\tau}_{I,J}\rfloor+1)},
 \end{equation}
  where the quantities $u^{\tau}_{I,J}$ and $l^{\tau}_{I,J}$ are defined in Definition~\ref{defn:u_ij_l_ij}, and $y_{I,(k)}$ is defined in Definition~\ref{defn:Y_I,k}.
 \item [(2)] (Lattice Property) The solution set $\mathcal{S}^{\tau}$ forms a lattice, i.e., it is closed under coordinatewise maximum and minimum. Consequently, there exists a solution of the objective function that attains the value of the upper (or lower, respectively) envelope $U^{\tau}$ (or $L^{\tau}$, respectively) at all locations simultaneously, i.e., 
 $\exists\;\; \hat{\theta}^{Upper}(\text{ or }\hat{\theta}^{Lower}\text{, respectively) }\in \cS^{\tau}\text{ such that }$
 $$\hat{\theta}^{Upper}_i(\text{ or }\hat{\theta}^{Lower}\text{, respectively) }=U^{\tau}_i(\text{ or }L^{\tau}_i,\text{ respectively) },\;\;\forall\;i\in [n].$$
 \item [(3)] (Non-Crossing Property) Let $0<\tau_1< \tau_2<1$. If $\hat{\theta}^{\tau_1}\in\cS^{\tau_1}$ and $\hat{\theta}^{\tau_2}\in\cS^{\tau_2}$, then for any $i\in [n]$, it holds that $\hat{\theta}^{\tau_1}_i\leq \hat{\theta}^{\tau_2}_i$.
    \end{enumerate}   
\end{theorem}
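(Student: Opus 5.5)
Our plan is to prove part (2) first, since the lattice property is the structural fact on which (1) and (3) rest. Write $F(\theta)=\sum_i\rho_\tau(y_i-\theta_i)+\lambda TV(\theta)$. The loss is separable, so it is modular: $\sum_i\rho_\tau(y_i-\theta_i)+\sum_i\rho_\tau(y_i-\theta'_i)$ equals the same sum evaluated at $\theta\vee\theta'$ and $\theta\wedge\theta'$. Each term $|\theta_{i+1}-\theta_i|$ is a convex function of a difference of two coordinates, and such functions are submodular on $\R^n$. A two-coordinate check gives $|b\vee b'-a\vee a'|+|b\wedge b'-a\wedge a'|\le |b-a|+|b'-a'|$. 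Summing these facts yields
\[
F(\theta\vee\theta')+F(\theta\wedge\theta')\le F(\theta)+F(\theta').
\]
If $\theta,\theta'\in\cS^\tau$, both terms on the left are at least the minimum value. Hence both are minimizers, and $\cS^\tau$ is a lattice.

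$\cS^\tau$ is a nonempty, closed, convex set. It is bounded because the minimizer values are confined to $[\min y,\max y]$: clipping any $\theta$ to this range does not increase either term. So $\cS^\tau$ is compact. Taking coordinatewise sup over $\cS^\tau$, the supremum at each coordinate is attained, and finite joins of the attaining elements remain in the set. This gives $\hat\theta^{Upper}$ with $\hat\theta^{Upper}_i=\max\cS^\tau_i$, and similarly $\hat\theta^{Lower}$. By convexity and compactness, $\cS^\tau_i$ is a compact interval.

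For part (3), we use the same modularity argument across two levels. Let $F_\tau$ denote the objective at level $\tau$. The map $(\tau,t)\mapsto -\rho_\tau(y-t)$ has increasing differences (supermodular), since $\rho_{\tau}(x)=\tau x - x\mathbf 1\{x<0\}$ has $\partial_\tau=x$, which is decreasing in $t$. Thus for $\tau_1<\tau_2$,
\[
F_{\tau_1}(\theta^1\wedge\theta^2)+F_{\tau_2}(\theta^1\vee\theta^2)\le F_{\tau_1}(\theta^1)+F_{\tau_2}(\theta^2),
\]
so $\theta^1\wedge\theta^2\in\cS^{\tau_1}$ and $\theta^1\vee\theta^2\in\cS^{\tau_2}$. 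This alone gives only a weak ordering, because $\cS^\tau$ is not a singleton. The strict statement needs more: every element of $\cS^{\tau_1}$ must lie below every element of $\cS^{\tau_2}$. Our plan is to show $U^{\tau_1}_i\le L^{\tau_2}_i$ directly from the formula in (1). We would try to check that $\lfloor u^{\tau_1}_{I,J}\rfloor+1\le\lceil l^{\tau_2}_{I,J}\rceil$ for all $I,J$, using Definition~\ref{defn:u_ij_l_ij}, which presumably gives $u,l$ linear in $\tau$ plus $\pm\lambda$ boundary terms. We would then compare the minmax and maxmin expressions through a crossing argument: pick a common pair $(I,J)$ witnessing both optima. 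Failing this, we would use the KKT strict inequality in $\tau$.

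For part (1), we follow the strategy of \cite{chatterjee2026minmaxtrendfilteringgeneralizations} via the subgradient (KKT) conditions. $\theta\in\cS^\tau$ iff there exist $z_i\in\partial\rho_\tau(y_i-\theta_i)$ and $s_j\in[-1,1]$ with $s_j=\mathrm{sign}(\theta_{j+1}-\theta_j)$ when nonzero, and
\[
\sum_{k\in I}z_k=\lambda(\pm s\text{ at the endpoints of }I)
\]
for every $I=[a:b]$, with the convention $s_0=s_n=0$.

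For the upper bound $U_i$, we fix $J\ni i$ and consider the constant piece of $\hat\theta^{Upper}$ containing $i$. An interval $I\subseteq J$ on which $\theta\ge t$, with $t$ larger than the stated order statistic, violates the KKT count: more than $\lfloor u_{I,J}\rfloor$ residuals would be strictly negative, so $\sum_I z$ would fall below the allowed $-\lambda(\cdot)$ bound. This gives $\theta_i\le\max_I y_{I,(\lfloor u\rfloor+1)}$, where $J$ is chosen as the maximal interval on which $\theta$ is at least its value at $i$. Hence $\theta_i\le U_i$ requires showing $\theta_i\le\max_{I\subseteq J}$ for every $J$. As in the mean case, we would take $J$ arbitrary and let $I$ be the level set $\{\theta\ge\theta_i\}\cap J$ component containing $i$.

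Conversely, achievability of $U_i$ comes from constructing a perturbation of $\hat\theta^{Upper}$. We raise the level set containing $i$ until a KKT constraint binds; the binding constraint is exactly the minimizing $J$ at which the max over $I$ equals the candidate value.

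The main obstacle will be this two-sided matching of integer rounding ($\lceil\cdot\rceil$ versus $\lfloor\cdot\rfloor+1$) with ties in $y$ and the subgradient freedom at residual zero. We would handle it by working with $\hat\theta^{Upper}$ and $\hat\theta^{Lower}$ separately and using the lattice property to localize.
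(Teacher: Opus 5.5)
Your part (2) and the upper-bound half of part (1) (arbitrary $J$, with $I$ the component of $\{\hat\theta\ge\hat\theta_i\}\cap J$ containing $i$, plus the $C_{I,J}$ casework for $z_{c-1}-z_d$) are sound and match the paper. Part (3), however, is left open because you missed that the two-level inequality you derived is \emph{strict} whenever crossing occurs. With $Q_\tau(\theta)=\sum_j\rho_\tau(y_j-\theta_j)$, separability and $\rho_{\tau_2}(x)-\rho_{\tau_1}(x)=(\tau_2-\tau_1)x$ give
\[
Q_{\tau_1}(\theta^1)+Q_{\tau_2}(\theta^2)-Q_{\tau_1}(\theta^1\wedge\theta^2)-Q_{\tau_2}(\theta^1\vee\theta^2)=(\tau_2-\tau_1)\sum_{j=1}^n(\theta^1_j-\theta^2_j)_+ .
\]
So if $\theta^1_j>\theta^2_j$ for some $j$, adding submodularity of $TV$ yields $F_{\tau_1}(\theta^1\wedge\theta^2)+F_{\tau_2}(\theta^1\vee\theta^2)<F_{\tau_1}(\theta^1)+F_{\tau_2}(\theta^2)$. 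This is impossible, since each term on the left is at least the corresponding minimum. That is the paper's proof, and it covers every pair of solutions without using (1).

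The route you propose instead fails. The termwise inequality $\lfloor u^{\tau_1}_{I,J}\rfloor+1\le\lceil l^{\tau_2}_{I,J}\rceil$ is false in general: for interior $J$ and $I=J$ one has $C_{J,J}=-1$, so $u^{\tau_1}_{J,J}=\tau_1|J|+2\lambda$ and $l^{\tau_2}_{J,J}=\tau_2|J|-2\lambda$, and the index inequality fails once $4\lambda\ge(\tau_2-\tau_1)|J|+1$. More fundamentally, $U^{\tau_1}_i\le L^{\tau_2}_i$ bounds a min--max \emph{above} by a max--min. That needs $J_1,J_2$ such that every entry of one inner list lies below every entry of the other. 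A common-pair argument only proves the opposite direction, as in the paper's proof that $L_i\le U_i$.

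The more serious gap is the achievability half of (1), which is the heart of the theorem. Take $\hat\theta^{\max}$ maximizing the $i$-th coordinate over $\cS^\tau$ (e.g.\ your $\hat\theta^{Upper}$). Let $\hat J=[a:b]$ be the largest interval containing $i$ on which $\hat\theta^{\max}\le\hat\theta^{\max}_i$, so $z_{a-1}=\lambda\one\{a>1\}$ and $z_b=-\lambda\one\{b<n\}$. The interval identity then gives only $\hat\theta^{\max}_i\ge y_{I,(\lceil u_{I,\hat J}\rceil)}$ for $I\subseteq\hat J$ containing $i$. This is one order statistic short of $y_{I,(\lfloor u_{I,\hat J}\rfloor+1)}$ when $u_{I,\hat J}\in\mathbb Z$, and the case $u_{I,\hat J}\ge|I|$ (target $+\infty$) must be excluded separately.

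Your sketch does not close this. ``Raise the level set containing $i$ until a KKT constraint binds'' has no content, because $\hat\theta^{Upper}$ cannot be raised at all inside $\cS^\tau$. It also does not identify the outer interval, and it perturbs only one set, whereas the bound is needed for \emph{every} inner $I$. The lattice property plays no role here.

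The missing idea is a separate one-sided perturbation for each inner interval. For $I=[c:d]\subseteq\hat J$ with $i\in I$, the function $\phi(t)=F(\hat\theta^{\max}+t\one_I)$ is affine on some $[0,\eta_1]$: no residual in $I$ changes sign, and only the two boundary edges of $I$ move in $TV$. Suppose $\hat\theta^{\max}_i<y_{I,(K+1)}$ with $K=u_{I,\hat J}\in\mathbb Z$. Then $\#\{j\in I:\hat\theta^{\max}_j\ge y_j\}\le K$, and maximality of $\hat J$ bounds the $TV$ slope by $2\lambda C_{I,\hat J}$. Hence $\phi'_+(0)\le-\tau|I|+K+2\lambda C_{I,\hat J}=0$. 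Affinity makes $\phi$ constant on $[0,\eta_1]$, which produces a minimizer with a larger $i$-th coordinate, a contradiction.

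The same perturbation with $I=\hat J$ rules out $u_{\hat J,\hat J}\ge|\hat J|$, the only case where $u_{I,\hat J}\ge|I|$ can occur. Only then does $\hat\theta^{\max}_i\ge\max_{I\subseteq\hat J,\,i\in I}y_{I,(\lfloor u_{I,\hat J}\rfloor+1)}\ge U_i$ follow.
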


\begin{definition}\label{defn:u_ij_l_ij}
For $I,J\in \I$ such that $I\subseteq J$, we define
    \[u^{\tau}_{I,J}:=\tau|I|-2\lambda C_{I,J};\]
\[l^{\tau}_{I,J}:=\tau|I|+2\lambda C_{I,J},\]
where $C_{I,J}$ is as defined below in Definition~\ref{defn:C_{IJ}}.
\end{definition}

\begin{definition}\label{defn:C_{IJ}}
    Fix a discrete interval $J \subseteq[n]$ and a subinterval $I \subseteq J$. Write $J= \left[j_1: j_2\right]$ and $I=[s: t]$. Throughout, $I \subset J$ denotes that $I$ lies strictly in the interior of $J$, that is, $I$ does not contain either boundary point of $J$. Depending on whether $J$ contains none, one or both global boundary points $\{1, n\}$, $C_{I,J}$ is defined as follows:
  \begin{enumerate}
    \item If $1 < j_1 \leq j_2 < n$, then
    \[
    C_{I, J} = \begin{cases} 
    1, & \text{if } I \subset J \\ 
    -1, & \text{if } I = J \\ 
    0, & \text{otherwise}.
    \end{cases}
    \]

    \item If $1 = j_1 \leq j_2 < n$, then
    \[
    C_{I, J} = \begin{cases} 
    1, & \text{if } I \subset J \\ 
    -1/2, & \text{if } I = J \\ 
    1/2, & \text{if } 1 = s \leq t < j_2 \\ 
    0, & \text{if } 1 < s \leq t = j_2. 
    \end{cases}
    \]

    \item If $1 < j_1 \leq j_2 = n$, then
    \[
    C_{I, J} = \begin{cases} 
    1, & \text{if } I \subset J \\ 
    -1/2, & \text{if } I = J \\ 
    1/2, & \text{if } j_1 < s \leq t = n \\ 
    0, & \text{if } j_1 = s \leq t < n. 
    \end{cases}
    \]

    \item If $J = [1 : n]$, then
    \[
    C_{I, J} = \begin{cases} 
    1, & \text{if } I \subset J \\ 
    0, & \text{if } I = J \\ 
    1/2, & \text{otherwise}. 
    \end{cases}
    \]
\end{enumerate}
\end{definition}

Let us explain the meaning of Theorem~\ref{thm:minmax_bound}. The first part of the theorem conveys a two fold message. Firstly, it says that for any minimizer $\hat{\theta}^{\tau}$ of the objective function in \eqref{eqn:defn_qtl_TVD}, the fitted value at any given location $i\in [n]$ satisfies $L^{\tau}_i \leq \hat{\theta}^{\tau}_i \leq  U^{\tau}_i.$ Secondly, for any value $v \in [L^{\tau}_i,U^{\tau}_i]$, there exists a minimizer $\hat{\theta}^{\tau}\in \cS^{\tau}$ such that $\hat{\theta}^{\tau}_i=v$. Equivalently, the set $\mathcal{S}^{\tau}_i$ is exactly equal to the interval $[L^{\tau}_i,U^{\tau}_i].$ This characterization in \eqref{eqn:minmax_bound_qtl} holds for any data vector $y\in \bR^n$, any location $i \in [n]$, any tuning parameter $\lambda\geq 0$ and any quantile level $\tau \in (0,1).$

The main point is that we obtain an explicit characterization of the upper (and similarly, the lower) envelope $U_i^{\tau}$ as a min--max functional of local order statistics. Concretely, for a fixed location $i$, we consider an outer interval $J$ containing $i$ and an inner sub-interval $I \subseteq J$ that also contains $i$. Each such pair of nested intervals $(J,I)$ yields a local estimate given by
\[
y_{I,(\lfloor u^{\tau}_{I,J}\rfloor+1)},
\]
the $(\lfloor u^{\tau}_{I,J}\rfloor+1)$-th order statistic of the observations restricted to the interval $I$. The upper envelope is then obtained by taking the maximum over all admissible inner intervals $I \subseteq J$ followed by the minimum over all outer intervals $J$ containing $i$. The lower envelope is defined analogously via a corresponding max--min operation.

Intuitively, since the goal is to estimate the $\tau$-th quantile, it is natural to consider empirical $\tau$-quantiles over local intervals $I$. However, our theorem reveals that the quantile TVD estimator instead operates with \emph{adjusted} local quantile levels. These adjustments are governed by the constants $C_{I,J}$; more precisely, the relevant local estimates are given by order statistics at levels $\tau|I| \pm 2\lambda C_{I,J}$ (up to integer rounding) within the interval $I$.

The constants $C_{I,J} \in \{\pm 1, \pm \tfrac{1}{2}, 0\}$ take only a few discrete values and are exactly the same as those appearing in the mean TVD representation~\cite{chatterjee2026minmaxtrendfilteringgeneralizations}. This is because these \emph{adjustment constants} arise fundamentally from the TV penalty itself. As is evident from the proof, their form is dictated entirely by the structure of the penalty, indicating that $C_{I,J}$ are intrinsically tied to the TV regularization.

As noted earlier, part~(1) of Theorem~\ref{thm:minmax_bound} implies that for any fixed location $i \in [n]$, there exists a solution $\hat{\theta}^{\max}$ (respectively, $\hat{\theta}^{\min}$) in $\cS^{\tau}$ such that $\hat{\theta}^{\max}_i = U_i$ (respectively, $\hat{\theta}^{\min}_i = L_i$). This naturally raises the question of whether one can find a \emph{single} solution $\hat{\theta} \in \cS^{\tau}$ that simultaneously attains these extremal values across all coordinates, i.e., $\hat{\theta}_i = U_i$ (or $L_i$) for every $i \in [n]$.

The second part of Theorem~\ref{thm:minmax_bound} answers this question in the affirmative. It shows that the solution set $\cS^{\tau}$ is closed under coordinatewise maximum and minimum (see Lemma~\ref{lemma:minmax_preserves_qtltvd_tvdonly} for a precise statement). As a consequence, $\cS^{\tau}$ forms a lattice, which in turn guarantees the existence of \emph{maximal} and \emph{minimal} elements. In particular, there exists a solution in $\cS^{\tau}$ that simultaneously attains the upper envelope $U_i$ at all locations $i \in [n]$, and similarly, a solution that attains the lower envelope $L_i$ for all $i \in [n]$.

Although the existence of maximal and minimal elements in $\mathcal{S}^{\tau}$ might suggest that the solution set fills out the entire rectangle $\prod_{i=1}^n [L_i, U_i]$, this intuition is incorrect, as can be seen from counterexamples even when $n = 3$.

The third part of the theorem reveals a striking property: solutions corresponding to different quantile levels cannot cross. We refer to Section~\ref{sec:intro_noncrossing} for a more detailed discussion of this phenomenon.

Figure~\ref{fig:qtltvd_envelope} illustrates an example of quantile regression for $\tau_1 = 0.25$ and $\tau_2=0.75$ using quantile TVD on a synthetic dataset of size $20$. 
\begin{figure}
\centering
\includegraphics[scale=0.7]{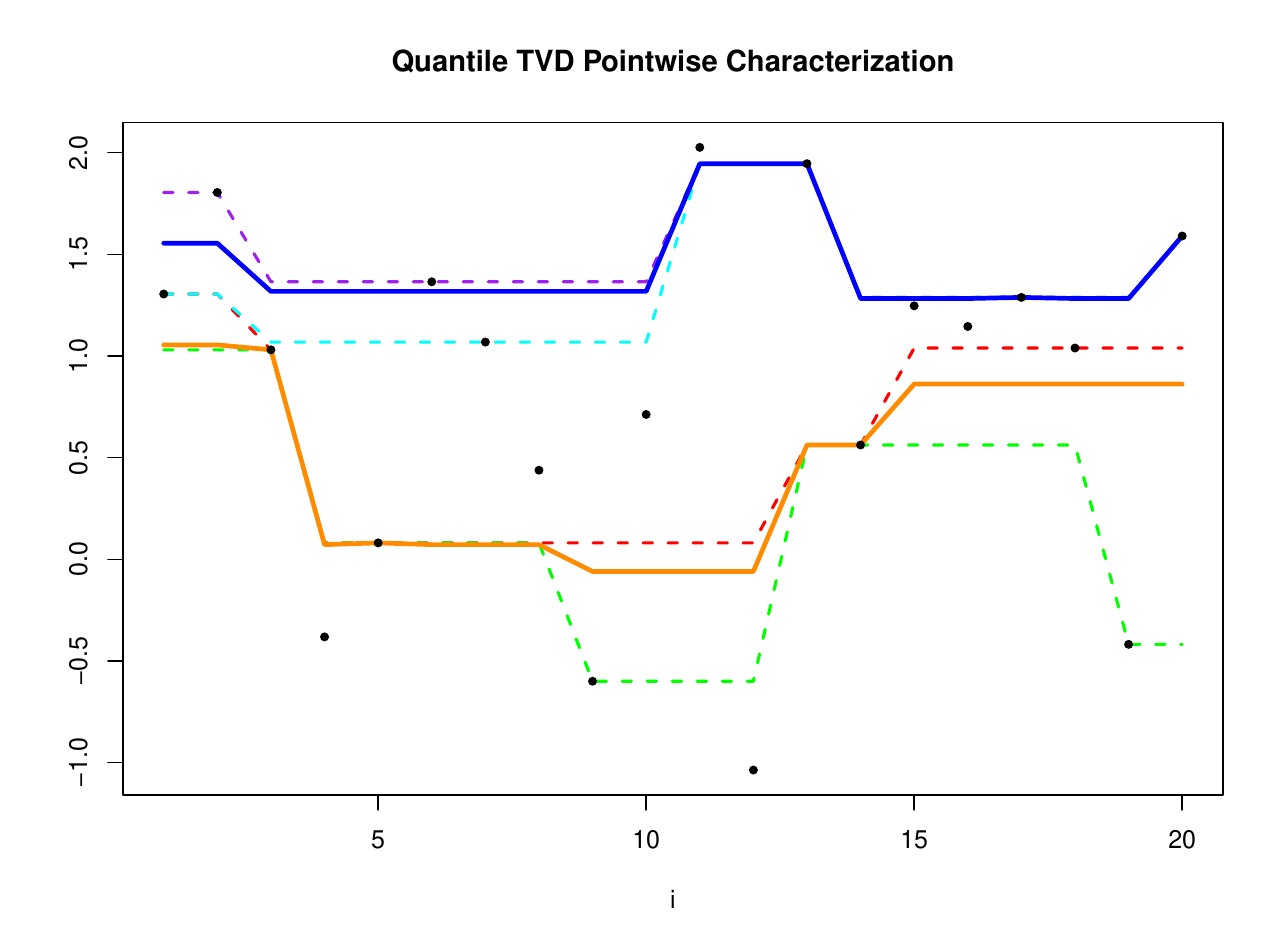}
\caption{Quantile TVD estimators independently estimated for $\tau_1=0.25$ (solid orange curve) and $\tau_2=0.75$ (solid blue curve) with $\lambda = 0.5$ , computed via an iterative algorithm. For $\tau_1=0.25$, the dashed red and green curves represent the boundary solutions $\hat{\theta}^{Upper}_i = U^{\tau_1}_i$ and $\hat{\theta}^{Lower}_i = L^{\tau_1}_i$, respectively, for all $i$. For $\tau_2=0.75$, these boundary solutions are represented by the dashed purple and cyan lines, respectively.}

\label{fig:qtltvd_envelope}
\end{figure}
\medskip

The fact that the set of feasible values $\cS^{\tau}_i$ of the estimator at a location $i$ forms a compact interval follows from a basic real analysis argument. The key technical contribution of this work lies in the explicit characterization of the endpoints of this interval. As a first step, we show that for any minimizer $\hat{\theta}^{\tau}$ of the objective function, the value $\hat{\theta}^{\tau}_i$ necessarily lies in the interval $[L_i, U_i]$. 

This part of the proof follows the general template of \cite{chatterjee2026minmaxtrendfilteringgeneralizations}, relying on the optimality conditions of the associated convex optimization problem and the development of a so-called \emph{interval identity} (Lemma~\ref{lem:qtvd-interval}).

However, this alone does not suffice to conclude that $L_i$ and $U_i$ are the exact endpoints of $\cS_i$. It remains to show that these bounds are actually attained. To this end, we consider extremal solutions $\hat{\theta}^{\tau,\max}$ and $\hat{\theta}^{\tau,\min}$, whose values at the target location $i$ are, respectively, the largest and smallest possible among all minimizers. By exploiting this maximality (or minimality) and developing a suitable perturbation argument, we establish that $\hat{\theta}^{\tau,\max}_i = U_i$ and $\hat{\theta}^{\tau,\min}_i = L_i$.

This step constitutes a key technical innovation of the paper. It is both necessary and nontrivial in the quantile setting, in contrast to the mean case treated in \cite{chatterjee2026minmaxtrendfilteringgeneralizations}, where uniqueness of the estimator obviates the need for such an argument. The full details of the proof are provided in Section~\ref{sec:minmax}.

\subsection{Monotonicity of Quantile TVD Estimates}\label{sec:intro_noncrossing}
 A well-known phenomenon in quantile regression is that when quantile curves corresponding to different levels $\tau_1 < \tau_2$ are estimated independently, the resulting estimates may cross; see Figure~\ref{fig:qtlTF_Crossing}. Such behavior violates the natural monotonicity requirement across quantile levels and is commonly referred to as the \emph{quantile crossing problem}. This issue was first noted in \cite{bassett1982empirical}, and has since been extensively studied; see, for example, \cite{he1997quantile, chernozhukov2010quantile, JMLR:v7:takeuchi06a, bondell2010noncrossing}.

A common approach to address this issue is to explicitly impose non-crossing constraints within the optimization problem or to apply post-processing procedures, such as rearrangement or sorting; see, for instance, \cite{JMLR:v7:takeuchi06a, bondell2010noncrossing}. In the context of quantile TVD, a similar strategy is often adopted in practice, where non-crossing is enforced as a hard constraint; see the \texttt{quantgen} R package~\cite{quantgen-package}.
 
Figure~\ref{fig:qtltvd_envelope} empirically demonstrates that the minimal solution for a higher quantile level never intersects the maximal solution for a lower quantile level. Given the exact pointwise characterization developed in part $(1)$ of Theorem~\ref{thm:minmax_bound}, this translates directly to the estimated quantile curves being non-crossing. Upon observing the same phenomenon across numerous other simulation studies on quantile TVD, we naturally wondered whether this phenomenon extends to more general penalties, or is it specific to the TV penalty? For instance, one may ask whether higher-order extensions such as quantile trend filtering (see \cite{madrid2022risk}) also automatically enforce non-crossing.

Our empirical investigations suggest that this is not the case. In particular, second-order quantile trend filtering can produce crossing solutions, as illustrated in Figure~\ref{fig:qtlTF_Crossing}.

\begin{figure}[!h]
\centering
\includegraphics[scale=0.6]{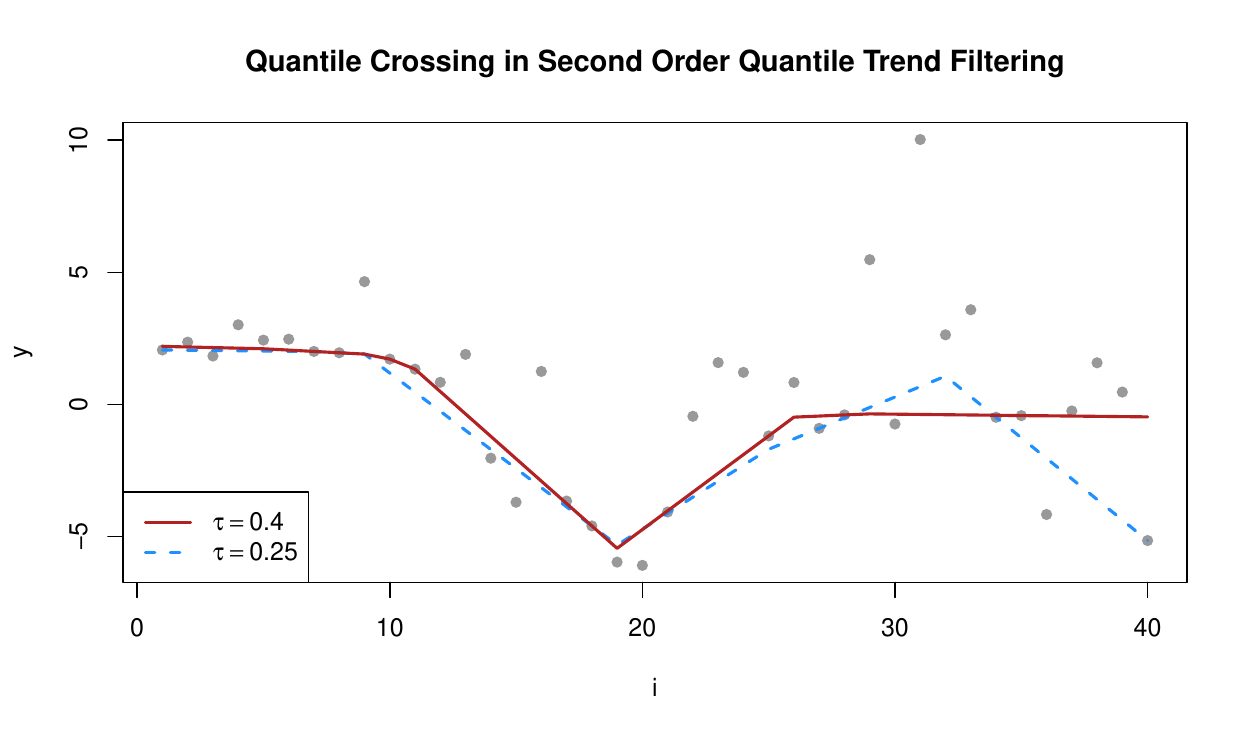}
\caption{Quantile crossing in second-order quantile trend filtering.}
\label{fig:qtlTF_Crossing}
\end{figure}

Motivated by these observations, we also wondered whether it was indeed possible to mathematically establish non-crossing of quantile TVD. Theorem~\ref{thm:minmax_bound} answers this question in the affirmative. Part~(3) shows that the estimator is intrinsically non-crossing: for any two quantile levels $\tau_1 < \tau_2$, the corresponding quantile TVD estimates cannot cross when a common tuning parameter is used. To the best of our knowledge, this property has not been previously identified and distinguishes quantile TVD from many other quantile regression methods.

An immediate implication of this result is that no additional constraints or post-processing steps are required to enforce non-crossing when estimating multiple conditional quantile curves simultaneously using quantile TVD with a common tuning parameter.

In further pursuit of understanding the exact structural property of the TVD estimator that drives this non-crossing phenomenon, we discovered that the key mechanism is the \emph{submodularity} of the TV penalty. This is indeed the key technical ingredient in the proof of Theorem~\ref{thm:minmax_bound}, part $(3)$. Building on this insight, we establish a general result showing that \textit{any penalized quantile regression estimator with a submodular penalty is intrinsically non-crossing}. To the best of our knowledge, such a result has not been previously identified and may be of independent interest. We defer the formal definition of submodularity and the proof of this result to Section~\ref{sec:monotonicity}.

\subsection{Local Risk Analysis of Quantile TVD}\label{sec:intro_localrisk}

Another key contribution of this work is the use of the minmax/maxmin characterization to derive pointwise risk bounds for the quantile TVD estimator. We analyze the estimator under the quantile sequence model
\begin{equation}\label{eqn:qtl_seq_model}
    y_i = \theta_i^* + \epsilon_i, \qquad i=1,\dots,n,
\end{equation}
where the errors $\epsilon_i$ are independent with $\tau$-th quantile equal to zero, and $\theta_i^*$ denotes the $\tau$-th quantile of the signal at location $i$. This model naturally arises in nonparametric quantile regression with fixed design points; for simplicity, we assume an equally spaced grid on $[0,1]$. Letting $\theta_i^* = f^*(i/n)$ for an unknown function $f^*$, the goal is to estimate $f^*$ under suitable structural assumptions using total variation regularization.

The minmax/maxmin representation gives rise to a local, nonstandard, and multiscale bias--variance decomposition of the pointwise estimation error. Leveraging this decomposition, we derive pointwise risk bounds for the quantile TVD estimator under the model~\eqref{eqn:qtl_seq_model}. These bounds hold for arbitrary signals and simultaneously for all locations $i \in [n]$ with polynomially high probability.

Notably, in contrast to the mean regression setting, our bounds remain valid under heavy-tailed noise distributions, including the Cauchy distribution. As a consequence, we obtain local rates of convergence when the true quantile function is locally H\"older smooth. The resulting bounds exhibit an explicit dependence on the tuning parameter $\lambda$, thereby elucidating both how the risk varies with $\lambda$ and how the optimal choice of $\lambda$ depends on the local smoothness of $f^*$.

To the best of our knowledge, the only comparable analyses of quantile TVD are \cite{madrid2022risk} and \cite{zhang2023element}. The former studies global risk under Huber loss, while the latter derives pointwise risk bounds restricted to piecewise constant signals. In contrast, our results establish pointwise risk bounds for general H\"older smooth signals, thereby substantially extending the existing theory. We present the detailed bias--variance decomposition and the resulting pointwise risk bounds in Section~\ref{sec:ptwise_error}.

\subsection{Outline of the Paper}
In Section~\ref{sec:minmax}, we present a detailed proof of Theorem~\ref{thm:minmax_bound}. As this theorem constitutes the central result of the paper, its proof forms the core technical contribution and is therefore presented first. 

In Section~\ref{sec:monotonicity}, we develop the connection to submodularity. We begin by reviewing basic properties of submodular functions and then prove that any penalized quantile regression estimator with a submodular penalty is intrinsically non-crossing across quantile levels, provided a common tuning parameter is used. We also discuss additional examples of submodular penalties beyond total variation.

In Section~\ref{sec:ptwise_error}, we show how the min--max/max--min characterization naturally yields a local bias--variance decomposition of the pointwise estimation error, and we use this decomposition to derive new pointwise risk bounds for quantile TVD under general H\"older smoothness assumptions.

Finally, in Section~\ref{sec:conclusion}, we summarize our contributions.

\section{Proof of Theorem~\ref{thm:minmax_bound}}\label{sec:minmax}
\subsection{Proof Outline}
While the proof of Theorem~\ref{thm:minmax_bound} constitutes the central technical contribution of this work, it is lengthy and involved. We therefore begin with a brief outline of the main ideas, highlighting several key technical components. For notational simplicity, we suppress the superscript $\tau$ whenever it is clear from the context.
\begin{enumerate}
    \item Fix $i\in [n]$. Using a basic real analysis argument, we first identify that the set $\cS_i$ is a compact interval in $\bR$.
    \item Then, we utilize the optimality conditions related to the convex optimization problem in \eqref{eqn:defn_qtl_TVD} and identify an \textit{interval averaging identity} concerning corresponding dual variables, see Lemma~\ref{lem:qtvd-interval}.
    \item For \textit{any} $\hat{\theta}\in \cS$ and \textit{any} $J = [a:b]$ containing $i$, if one considers $I=[c:d]\subseteq J$ such that $I$ contains $i$ and it is the \textit{largest subinterval of }$J$ \textit{such that } $\hat{\theta}_i$ is a local minimum, then using the interval identity in Lemma~\ref{lem:qtvd-interval} we show that
\[
\hat{\theta}_i\leq y_{I,(\lfloor u_{I,J}\rfloor+1)}\leq \max_{I\in \I: I\subseteq J,\, i \in I}\, y_{I,(\lfloor u_{I,J}\rfloor+1)}.
\]
Since the choice of $J$ was arbitrary, we conclude $\hat{\theta}_i\leq U_i$. We then argue via an anti-symmetry argument that this automatically implies $L_i\leq \hat{\theta}_i$. This proves that $\cS_i\subseteq [L_i,U_i].$

\item We next consider an element $\hat{\theta}^{\max}\in \cS$ such that $\hat{\theta}^{\max}_i$ attains the maximum possible value at location $i$. We then consider the interval $\hat{J}=[a:b]\subseteq [1:n]$ containing $i$, defined as the \textit{largest subinterval on which} $\hat{\theta}_i$ \textit{is a local maximum}. Now, for any interval $I\subseteq \hat{J}$ containing $i$, we show that if one constructs a new vector by applying a constant positive perturbation to $\hat{\theta}^{\max}$ at each location in $I$, then the corresponding objective function varies affinely with the magnitude of the perturbation. Exploiting this affine structure via a directional derivative argument, we further show that if $\hat{\theta}^{\max}_i<U_i$, then one can increase $\hat{\theta}^{\max}$ on $I$ by a small positive amount without increasing the value of the objective function. Since $I$ contains $i$, this contradicts the maximality of $\hat{\theta}^{\max}_i$ at location $i$. Thus, we conclude that $\hat{\theta}^{\max}_i=U_i$. By a similar argument, we conclude that $\hat{\theta}^{\min}_i=L_i$, where $\hat{\theta}^{\min}\in \cS$ is such that $\hat{\theta}^{\min}_i$ attains the minimum possible value at location $i$. This perturbation argument is one of the primary technical innovations in the proof, and was not needed in the mean regression setting of \cite{chatterjee2026minmaxtrendfilteringgeneralizations}; see Proposition~\ref{lem:perturb-upper}, Lemma~\ref{lem:J_hat_in_tilde_I}, and their proofs. It therefore follows that $\cS_i=[L_i,U_i]$.
     \item Using the submodularity of the TV penalty together with the piecewise linear geometry of the quantile loss, we first establish that the solution set $\cS$ is closed under coordinatewise maximum and minimum. Then, by considering $n$ (possibly distinct) elements of $\cS$, each attaining the maximum (or minimum) possible value at a given coordinate, and taking their coordinatewise maximum (or minimum, respectively), we establish the existence of $\hat{\theta}^{Upper}$ (or $\hat{\theta}^{Lower}$, respectively).

     \item If there indeed existed crossing quantile TVD estimators for two different quantile levels $\tau_1<\tau_2$ with a common tuning parameter, then one can take coordinatewise maximum and minimum of those two estimators and using submodularity, we show that at least one of these two newly constructed estimators would yield strictly smaller value of the corresponding objective function in comparison to the initial crossing quantile TVD solutions. This violates the optimality of the latter and thus proves that crossing quantile TVD estimators with same tuning parameter can not exist.    
\end{enumerate}

 \begin{remark}
        The statement of Theorem~\ref{thm:minmax_bound} continues to hold even for quantile levels $\tau=0,1$ in a certain sense. Note that for $\tau=0$, it follows from Lemma~\ref{lem:minmax_finite} that $L_i=-\infty$ and $U_i=\min_{i\in [n]}y_i$. However, it is not difficult to see $\hat{\theta}$ is a quantile TVD estimator for $\tau=0$ if and only if $\hat{\theta}=(c,\ldots,c)^{\top}\in \bR^n$, where $c\leq \min_{i\in [n]}y_i$. Thus, Theorem~\ref{thm:minmax_bound} holds true for $\tau=0$ if we allow a convention that a constant vector with all entries $-\infty$ is a potential solution. A similar argument can be provided for $\tau=1$.
    \end{remark}

\subsection{Proof of Part $(1)$}\label{sec:minmax_1}
\noindent  We first note that when $\tau \in (0,1)$, the envelope bounds $L_i$ and $U_i$ are real numbers by Lemma~\ref{lem:minmax_finite}, thus $[L_i,U_i]$ is indeed a compact interval in $\bR$. We now divide the proof of part $(1)$ of Theorem~\ref{thm:minmax_bound} into the following three steps.
\begin{list}{\textbf{Step \arabic{enumi}:}}{
    \usecounter{enumi}
    \setlength{\leftmargin}{4em}   
    \setlength{\labelwidth}{3.5em} 
    \setlength{\labelsep}{0.5em}   
}
\item \label{itm:step_1} The set $\cS_i$ is a compact interval in $\bR$.
    \item \label{itm:step_2} If $\theta_i \in \cS_i$, then $L_i \leq \theta_i \leq U_i$.
    \item \label{itm:step_3} There exists a solution $\hat{\theta}^{\max}$ (or $\hat{\theta}^{\min}$, respectively) such that $\hat{\theta}^{\max}_i \geq U_i$ (or $\hat{\theta}^{\min}_i \leq L_i$, respectively). 
\end{list}

Step~\ref{itm:step_2} implies that $\cS_i\subseteq [L_i,U_i]$. Moreover, under the light of Step~\ref{itm:step_2}, Step~\ref{itm:step_3} implies that $\hat{\theta}^{\max}_i= U_i$ (or $\hat{\theta}^{\min}_i= L_i$, respectively). Since $\cS_i$ is an interval by Step~\ref{itm:step_1}, this means for any value $\theta$ in the interval $[L_i,U_i]$, there is a corresponding quantile TVD estimate whose point estimate at the $i$-th location is $\theta$, i.e., $[L_i,U_i]\subseteq \cS_i$.

\begin{proof}[\underline{Proof of Step~\ref{itm:step_1}:}]
For the sake of completeness, let us first argue that the set $\cS$ is non-empty. We define $F$ to be the objective function of interest, i.e., 
\begin{equation}\label{eqn:objective_fn}
    F^{\tau}(\theta):=\sum_{i=1}^{n}\rho_{\tau}(y_i-\theta_i)+\lambda\,TV(\theta),\qquad \theta \in \bR^n.
\end{equation}
We will write $F$ instead whenever the context is clear. $F$ is non-negative, continuous and convex. Therefore,  $0\leq I:=\inf_{\theta \in \bR^n} F(\theta)< \infty$. Moreover, observe that $F(\theta)\geq\sum_{i=1}^{n}\rho_{\tau}(y_i-\theta_i)$ and the RHS goes to infinity whenever $\left\|\theta\right\|\to \infty$. Hence, there is a compact rectangle $\mathcal{R}\in \bR^n$ such that $F(\theta)\geq I+1$ whenever $\theta \notin \mathcal{R}$. Moreover, $F$ being continuous in the compact set $\mathcal{R}$, the global infimum of $F$ is attained at some point in $\mathcal{R}$. Thus, $\cS$ is non-empty and bounded.
Now, note that convexity of $F$ implies that for any $a,b\in \cS$ and $\gamma\in [0,1]$,
\[I\leq F(\gamma\,a+(1-\gamma)\,b)\leq \gamma F(a)+(1-\gamma)F(b)=I.\]
The above implies that both $\cS$ is a convex set and its projection $\cS_i$ is also a convex set. Since $\cS_i\subseteq \bR$, convexity implies $\cS_i$ must be an interval. It remains to show the interval is closed. However, continuity of $F$ implies that its pre-image of the closed $\{I\}$, which is actually $\cS$, is also a closed set. Thus, Step~\ref{itm:step_1} is proved.
\end{proof}

\begin{proof}[\underline{Proof of Step~\ref{itm:step_2}:}]
The key technical ingredient of the proof of Step~\ref{itm:step_2} is the following subgradient characterization of any solution to the objective function in \eqref{eqn:defn_qtl_TVD}.
\begin{lemma}\label{lem:qtvd-interval}
$\hat{\theta}\in \cS$ if and only if there exist vectors $g=(g_1,\dots,g_n)\in\R^n$ and $z=(z_0,\dots,z_n)\in\R^{n+1}$ such that
\begin{enumerate}
\item $z_0=z_n=0$, and for each $k=1,\dots,n-1$,
\[
z_k\in
\begin{cases}
\{\lambda\}, & \hat\theta_k>\hat\theta_{k+1},\\
[-\lambda,\lambda], & \hat\theta_k=\hat\theta_{k+1},\\
\{-\lambda\}, & \hat\theta_k<\hat\theta_{k+1};
\end{cases}
\]
\item for each $j$,
\[
g_j \in
\partial_{\theta_j}\rho_\tau(y_j-\hat\theta_j)=
\begin{cases}
\{-\tau\}, & \hat\theta_j<y_j,\\
[-\tau,1-\tau], & \hat\theta_j=y_j,\\
\{1-\tau\}, & \hat\theta_j>y_j;
\end{cases}
\]
\item for every interval $I=[a:b]\subseteq[n]$,
\begin{equation}\label{eq:interval-id-qtvd}
\sum_{j=a}^b g_j = z_{a-1}-z_b,
\end{equation}
\end{enumerate}
where $\partial_{\theta_j} \rho_{\tau}(y_j-\hat{\theta}_j)$ is the subgradient of the function $\rho_{\tau}(y_j-\theta_j)$ with respect to $\theta_j$, evaluated at $\hat{\theta}_j$.
\end{lemma}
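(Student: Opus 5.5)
The plan is to read the lemma as the KKT (first-order) optimality condition for the convex, finite-valued objective $F$ in \eqref{eqn:objective_fn}. Since $F$ is convex and finite on all of $\bR^n$, $\hat\theta\in\cS$ if and only if $0\in\partial F(\hat\theta)$. Both $\theta\mapsto\sum_j\rho_\tau(y_j-\theta_j)$ and $\theta\mapsto\lambda TV(\theta)$ are finite convex functions on $\bR^n$, so the subdifferential sum rule applies with no qualification condition:
\[
\partial F(\hat\theta)=\partial\Big(\sum_j\rho_\tau(y_j-\cdot_j)\Big)(\hat\theta)+\lambda\,\partial TV(\hat\theta).
\]
The first summand is separable, so its subdifferential is the product $\prod_j\partial_{\theta_j}\rho_\tau(y_j-\hat\theta_j)$. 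A direct computation gives exactly the sets in condition 2. For instance, when $\hat\theta_j<y_j$ we have $\rho_\tau(y_j-\theta_j)=\tau(y_j-\theta_j)$ locally, with derivative $-\tau$. At a kink the subdifferential is the convex hull of the two one-sided derivatives, namely $[-\tau,1-\tau]$.

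For the penalty, write $TV(\theta)=\|D\theta\|_1$, where $D\in\bR^{(n-1)\times n}$ is the difference operator $(D\theta)_k=\theta_{k+1}-\theta_k$. By the chain rule for a convex function composed with a linear map (again valid because $\|\cdot\|_1$ is finite everywhere), $\lambda\,\partial TV(\hat\theta)=\{D^\top w: w_k\in\lambda\,\partial|\cdot|((D\hat\theta)_k)\}$. The sign convention in condition 1 corresponds to $z_k=-w_k$: if $\hat\theta_k>\hat\theta_{k+1}$, then $(D\hat\theta)_k<0$, so $w_k=-\lambda$ and $z_k=\lambda$, and the other cases follow the same way. Set $z_0=z_n=0$. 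Then $(D^\top w)_j=w_{j-1}-w_j=z_j-z_{j-1}$ for every $j\in[n]$, where the endpoint conventions $z_0=z_n=0$ correctly account for the single-difference rows at $j=1$ and $j=n$.

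The optimality condition $0\in\partial F(\hat\theta)$ therefore reads: there exist $g$ satisfying condition 2 and $z$ satisfying condition 1 with
\[
g_j+z_j-z_{j-1}=0,\quad\text{that is,}\quad g_j=z_{j-1}-z_j\quad\text{for all } j\in[n].
\]
Summing over $j=a,\dots,b$ telescopes to \eqref{eq:interval-id-qtvd}. Conversely, taking $a=b=j$ in \eqref{eq:interval-id-qtvd} recovers the pointwise identity. So condition 3 is equivalent to the coordinatewise stationarity equations, and both directions of the ``if and only if'' follow.

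There is no real obstacle here. The only step requiring care is bookkeeping: matching the sign convention of $z$ with the subgradient of $|\theta_{k+1}-\theta_k|$, and checking that the boundary coordinates $j=1,n$ come out right with $z_0=z_n=0$. Everything else is the standard sum and chain rule for subdifferentials of finite convex functions.
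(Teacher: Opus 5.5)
Your proposal is correct and follows essentially the same route as the paper: write $F=G+\lambda\|D\theta\|_1$, apply $0\in\partial F(\hat\theta)$ with the sum and chain rules, set $z_k=-\lambda s_k$ (your $z_k=-w_k$) with $z_0=z_n=0$ to obtain $g_j=z_{j-1}-z_j$, and telescope. Two small improvements over the paper: you note explicitly that singleton intervals in condition 3 recover the coordinatewise stationarity equations, and your parametrization $w_k\in\lambda\,\partial|\cdot|\bigl((D\hat\theta)_k\bigr)$ avoids dividing by $\lambda$ when $\lambda=0$; both make the converse direction slightly more transparent.
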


\begin{proof}[Proof of Lemma~\ref{lem:qtvd-interval}]
Let $D:\R^n\to\R^{n-1}$ be the matrix such that $(D\theta)_k=\theta_{k+1}-\theta_k$. We write the objective function in \eqref{eqn:defn_qtl_TVD} as
$F(\theta)=G(\theta)+\lambda\|D\theta\|_1$ with
$G(\theta)=\sum_{j=1}^n\rho_\tau(y_j-\theta_j)$. Since $F$ is convex, KKT condition implies that $\hat{\theta}\in \cS$ if and only if $0\in\partial F(\hat\theta)$. The latter is equivalent to saying that there exist
$g\in\partial G(\hat\theta)$ and $s\in\partial\|D\hat\theta\|_1$ with $0=g+\lambda D^\top s$.
Componentwise,
\[
s_k\in
\begin{cases}
\{+1\}, & \hat\theta_{k+1}>\hat\theta_k,\\
[-1,1], & \hat\theta_{k+1}=\hat\theta_k,\\
\{-1\}, & \hat\theta_{k+1}<\hat\theta_k.
\end{cases}
\]
Define
\[
z_0=0,\qquad z_k=-\lambda s_k\ (k=1,\dots,n-1),\qquad z_n=0.
\]
Then the stated bounds on $z_k$ hold. Moreover a direct computation gives
\[
\lambda D^\top s=(z_1-z_0,\ z_2-z_1,\ \dots,\ z_n-z_{n-1}),
\]
so $g_j=z_{j-1}-z_j$ for each $j$, and summing over $j=a,\dots,b$ telescopes to \eqref{eq:interval-id-qtvd}.
\end{proof}
 Now, pick an arbitrary $\theta\in \cS_i$ and a corresponding $\hat{\theta}\in \cS$ such that $\hat{\theta}_i=\theta$. We would first prove the upper bound in Step~\ref{itm:step_2} and then argue why that automatically implies the lower bound. In order to establish the upper bound, it suffices to show that for any $J\in \cI$ containing $i$, it holds that
 \[\hat{\theta}_i\leq \max_{I\in \I: I\subseteq J,\, i \in I}\, y_{I,(\lfloor u_{I,J}\rfloor+1)}.\]
 To this end, fix an outer interval $J=[a:b]$ containing $i$.
Let $I=[c:d]\subseteq J$ be the \emph{largest} subinterval of $J$ containing $i$ such that
\[
\hat\theta_u\ge \hat\theta_i\qquad \forall u\in I.
\]
Then $\hat\theta_j\ge \hat\theta_i$ for all $j\in I$, hence
\[
\#\{j\in I: y_j<\hat\theta_i\} \le \#\{j\in I:\hat\theta_j>y_j\}.
\]
Now, note that it follows from part $(ii)$ of Lemma~\ref{lem:qtvd-interval} that
\begin{equation}\label{eq:gj-onesided}
g_j \ge \one\{\hat\theta_j>y_j\}-\tau,
\qquad
g_j \le \one\{\hat\theta_j\ge y_j\}-\tau.
\end{equation}
Summing the first inequality in \eqref{eq:gj-onesided} over $j\in I$ gives
\[
\sum_{j=c}^d g_j \ge \#\{j\in I:\hat\theta_j>y_j\}-\tau|I|.
\]
Combining with part $(iii)$ of Lemma~\ref{lem:qtvd-interval} yields
\begin{equation*}
0\le \#\{j\in I:\hat\theta_j>y_j\}\le \tau|I|+(z_{c-1}-z_d).
\end{equation*}
Since $\hat{\theta}_j\geq \hat{\theta}_i$ for any $j \in I$, it follows that
\begin{equation}\label{eq:count-upper-thm1}
    0\le \#\{j\in I:\hat\theta_i>y_j\}\le \lfloor \tau|I|+(z_{c-1}-z_d)\rfloor,
\end{equation}
where the second inequality holds because \(\#\{j\in I:\hat\theta_i>y_j\}\) is an integer. Through the following  lemma, we first argue why~\eqref{eq:count-upper-thm1} implies an upper bound on $\hat{\theta}_i$ in terms of order statistics of the data restricted to the interval $I$.
\begin{lemma}\label{lem:orderstat}
Let $I\subseteq[n]$ and $t\in\R$. Then, for any integer $m$,
\begin{enumerate}
\item if $\#\{j\in I: t>y_j\}\le m$, then $t\le y_{I,(m+1)}$;
\item if $\#\{j\in I: t\geq y_j\}\ge m$, then $t\ge y_{I,(m)}$,
\end{enumerate}
where $y_{I,(k)}$ is defined in Definition~\ref{defn:Y_I,k}.
\end{lemma}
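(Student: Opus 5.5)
The plan is a direct counting argument on the sorted values of $y_I$, handled separately in the two parts. Edge cases where $m$ falls outside $[1:|I|]$ are absorbed by the $\pm\infty$ convention of Definition~\ref{defn:Y_I,k}.

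For part (1), assume $\#\{j\in I: t>y_j\}\le m$. If $m\ge |I|$, then $y_{I,(m+1)}=\infty$ and there is nothing to prove. If $m<0$, the hypothesis cannot hold, since a count is nonnegative, so the statement is vacuous. Now suppose $0\le m\le |I|-1$, so that $y_{I,(m+1)}$ is a genuine order statistic. Argue by contradiction and assume $t>y_{I,(m+1)}$. The $m+1$ smallest elements of $y_I$, counted with multiplicity through their indices, are all at most $y_{I,(m+1)}<t$. Hence at least $m+1$ indices $j\in I$ satisfy $t>y_j$, which contradicts the hypothesis. Therefore $t\le y_{I,(m+1)}$.

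For part (2), assume $\#\{j\in I: t\ge y_j\}\ge m$. If $m\le 0$, then $y_{I,(m)}=-\infty$ and the claim is trivial. If $m>|I|$, the hypothesis is impossible, so the statement is vacuous. For $1\le m\le |I|$, suppose instead that $t<y_{I,(m)}$. Then every $j$ with $y_j\le t$ has $y_j<y_{I,(m)}$. Any such $y_j$ must be among the first $m-1$ order statistics, because all elements ranked $m$-th or later are at least $y_{I,(m)}$. So at most $m-1$ indices satisfy $t\ge y_j$, a contradiction. Therefore $t\ge y_{I,(m)}$.

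The only point requiring care is the counting with ties. Phrasing the argument in terms of indices ranked by a fixed sorted order of $y_I$, with ties broken arbitrarily, makes the ``at least $m+1$'' and ``at most $m-1$'' counts rigorous. Together with the case analysis for $m$ outside the range, this is the main, though mild, obstacle.
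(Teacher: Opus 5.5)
Your proposal is correct and is essentially the paper's argument. You dispose of out-of-range $m$ via the $\pm\infty$ convention, then argue by contradiction that $t>y_{I,(m+1)}$ forces at least $m+1$ indices with $y_j<t$, and that $t<y_{I,(m)}$ forces at most $m-1$ indices with $y_j\le t$. Your edge-case handling is slightly more precise than the paper's: for $m<0$ in part (1) and $m>|I|$ in part (2), the claim holds because the hypothesis is impossible, not because the conclusion is automatically true.
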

Thus, under the light of Lemma~\ref{lem:orderstat}, \eqref{eq:count-upper-thm1} implies
\begin{equation}\label{eq:count-upper-thm1-part2}
    \hat{\theta}_i\leq y_{I,\left(\lfloor \tau|I|+(z_{c-1}-z_d)\rfloor+1\right)}.
\end{equation}
An upper bound on $z_{c-1}-z_d$ is now required. Note that a naive bound would be $2\lambda$ as $|z_k|\leq \lambda$ for all $k$. However, it turns out that this bound could be loose for certain configurations of $I$ and $J$. For example, if $J=[1:n]$ and $I=[c:d]$ where $1<c\leq d=n$, then maximality of $I$ forces $z_{c-1}=-\lambda$ (see Lemma~\ref{lem:qtvd-interval}) and hence $z_{c-1}-z_d=z_{c-1}=-\lambda$. This is exactly where the constants $C_{I,J}$ enter the proof. The following lemma provides upper bound on the quantity $z_{c-1}-z_d$ for the specific choice of $I$. 
\begin{lemma}\label{lemma:z_upper_bd}
    Consider any location $i\in [n]$ and fix an arbitrary minimizer $\hat\theta\in\cS$ and an outer interval $J=[a:b]$ containing $i$. Let $I=[c:d]\subseteq J$ be the largest subinterval of $J$ containing $i$ such that
\[
\hat\theta_u\ge \hat\theta_i\qquad \forall u\in I.
\]
Then, it holds that
\begin{equation}\label{eq:z-upper-key-thm1}
z_{c-1}-z_d \le -2\lambda\,C_{I,J}.
\end{equation}
where $z_0,z_1,\cdots,z_n$ are the corresponding dual variables defined in Lemma~\ref{lem:qtvd-interval}.
\end{lemma}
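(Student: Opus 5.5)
The plan is a direct case analysis. The bound~\eqref{eq:z-upper-key-thm1} involves only the two dual variables $z_{c-1}$ and $z_d$. Each is pinned down, or at least bounded, by whether the endpoints of $I$ coincide with those of $J$ and whether those endpoints are global boundary points. I will first derive one-sided bounds on $z_{c-1}$ and $z_d$ separately, and then add them in each case of Definition~\ref{defn:C_{IJ}}.

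\emph{Left endpoint.} If $c>a$, then $c-1\in J$. Maximality of $I$ means $I$ cannot be extended to $c-1$, so $\hat\theta_{c-1}<\hat\theta_i\le \hat\theta_c$. Hence $\hat\theta_{c-1}<\hat\theta_c$, and part (1) of Lemma~\ref{lem:qtvd-interval} forces $z_{c-1}=-\lambda$. If $c=a$ and $a=1$, then $z_{c-1}=z_0=0$. If $c=a>1$, we only have the trivial bound $z_{c-1}\le\lambda$.

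\emph{Right endpoint.} Symmetrically, if $d<b$, then $\hat\theta_{d+1}<\hat\theta_i\le\hat\theta_d$, so $z_d=\lambda$. If $d=b=n$, then $z_d=z_n=0$. If $d=b<n$, then $z_d\ge-\lambda$.

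Summarizing: the contribution of the left endpoint to $z_{c-1}$ is at most $-\lambda$, $0$ or $\lambda$ in the three situations above. The contribution of the right endpoint to $-z_d$ is likewise at most $-\lambda$, $0$ or $\lambda$.

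Next I combine these bounds case by case, recalling that $I\subset J$ means $c>a$ and $d<b$.

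\emph{Case 1:} $1<a\le b<n$.
\begin{itemize}
\item If $I\subset J$, then $z_{c-1}-z_d=-\lambda-\lambda=-2\lambda$.
\item If $I=J$, then $z_{c-1}-z_d\le\lambda+\lambda=2\lambda$.
\item If exactly one endpoint coincides, we get at most $\lambda-\lambda=0$. This matches $C_{I,J}=1,-1,0$.
\end{itemize}

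\emph{Case 2:} $a=1<b<n$.
\begin{itemize}
\item If $I\subset J$, we get $-2\lambda$.
\item If $I=J$, then $z_0-z_b\le 0+\lambda=\lambda=-2\lambda(-\tfrac12)$.
\item If $c=1$ and $d<b$, we get $0-\lambda=-\lambda=-2\lambda\cdot\tfrac12$.
\item If $c>1$ and $d=b$, we get at most $-\lambda+\lambda=0$.
\end{itemize}

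\emph{Case 3:} $1<a\le b=n$. This is the mirror image of Case 2.

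\emph{Case 4:} $J=[1:n]$.
\begin{itemize}
\item If $I\subset J$, we get $-2\lambda$.
\item If $I=J$, we get $z_0-z_n=0$.
\item If exactly one endpoint is global, we get $-\lambda$, matching $C_{I,J}=\tfrac12$.
\end{itemize}

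In Case 2 the value $a=b=1$ deserves a remark: then necessarily $I=J$, which is covered. The degenerate cases $|J|=1$ fall under $I=J$ throughout.

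The only genuinely substantive step is the endpoint observation that maximality of $I$ within $J$ forces a strict jump, and hence a saturated dual variable with the correct sign. The key point there is that $c-1$ (resp.\ $d+1$) lies in $J$ precisely when $c>a$ (resp.\ $d<b$). The rest is bookkeeping. The main care needed is to check that the case split of Definition~\ref{defn:C_{IJ}} exactly aligns with the pattern of which endpoints are interior to $J$ versus global boundaries.
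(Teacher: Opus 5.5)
Your proof is correct and follows essentially the same route as the paper's. Maximality of $I$ within $J$ forces $z_{c-1}=-\lambda$ when $c>a$ and $z_d=\lambda$ when $d<b$, the remaining endpoints are handled by $|z_k|\le\lambda$ and $z_0=z_n=0$, and this is followed by the same case-by-case check against Definition~\ref{defn:C_{IJ}}; isolating the two endpoint bounds first is simply a tidier organization of the paper's bookkeeping.
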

\begin{remark}
    The proof of Lemma~\ref{lemma:z_upper_bd} actually can be found inside the proof of Theorem 1.1 in \cite{chatterjee2026minmaxtrendfilteringgeneralizations}. However, we still include the proof in Appendix~\ref{appna} for self-containment. It basically does a careful bookkeeping of all possible cases whether one or both end points of $I$ coincide(s) with that of $J$ and then uses the appropriate characterization of $z_{c-1}$ or $z_d$ via part $(1)$ of Lemma~\ref{lem:qtvd-interval}.
\end{remark}
We now come back to the original proof. Using Lemma~\ref{lemma:z_upper_bd}, it follows from~\eqref{eq:count-upper-thm1-part2} that
\[\hat{\theta}_i\leq y_{I,(\lfloor u_{I,J}\rfloor+1)}\leq \max_{I\in \cI:\,I\subseteq J,\,i\in I}\,y_{I,(\lfloor u_{I,J}\rfloor+1)},\]
where $u_{I,J}=\tau|I|-2\lambda\,C_{I,J}$. Since the above inequality holds for any arbitrary $J$ containing $i$, 
the upper bound is thus established.\\

We now argue why this implies $L_i\leq \hat{\theta}_i$. First, we observe that establishing the lower bound is equivalent to proving $-\hat{\theta}_i\leq -L_i$. Next, we also observe that from the definition of the quantile loss, it follows that $-\hat{\theta}$ is a quantile TVD estimate to the data $-y$ with the same tuning parameter $\lambda\geq 0$, but for quantile level $1-\tau$, i.e.,
\[-\hat{\theta}\in \argmin_{\theta \in \bR^n}\sum_{i=1}^{n}\rho_{1-\tau}(y_i-\theta_i)+\lambda\,TV(\theta).\]
Therefore, the already established upper bound implies
\[-\hat{\theta}_i\leq \min_{J\in \cI:i\in J}\,\max_{I\in \cI:\,I\subseteq J,\,i\in I}\,(-y)_{I,(\lfloor (1-\tau)|I|-2\lambda C_{I,J}\rfloor+1)}.\]
Now, observe the following identity that relates the order statistics of a vector to those of the negative of the vector itself:
\begin{equation}\label{eqn:order_stat_neg_vector}
    (-y)_{I,(|I|-k+1)}=-(y_{I,(k)}),
\end{equation}
where we follow the general convention on order statistics defined in Definition~\ref{defn:Y_I,k}. Therefore,
\begin{align*}
    -\hat{\theta}_i&\leq \min_{J\in \cI:i\in J}\,\max_{I\in \cI:\,I\subseteq J,\,i\in I}\,(-y)_{I,(\lfloor (1-\tau)|I|-2\lambda C_{I,J}\rfloor+1)}\\
    &=\min_{J\in \cI:i\in J}\,\max_{I\in \cI:\,I\subseteq J,\,i\in I}\,-(y_{I,(|I|-\lfloor (1-\tau)|I|-2\lambda C_{I,J}\rfloor)}).
\end{align*}
Now, observe that  for any real number $a$, it holds that \(|I|-\lfloor a \rfloor\geq \lceil |I|-a\rceil.\) This is true because if $k\leq a < k+1$, where $k$ is an integer, then $|I|-k-1<|I|-a\leq |I|-k=|I|-\lfloor a \rfloor$. Therefore, we can bound \(|I|-\lfloor (1-\tau)|I|-2\lambda C_{I,J}\rfloor\) below by \[\lceil |I|-(1-\tau)|I|-2\lambda C_{I,J}\rceil=\lceil \tau |I|+2\lambda C_{I,J}\rceil=\lceil l_{I,J}\rceil.\]
Therefore,
\begin{align*}
    -\hat{\theta}_i&\leq \min_{J\in \cI:i\in J}\,\max_{I\in \cI:\,I\subseteq J,\,i\in I}\,-(y_{I,(\lceil l_{I,J}\rceil)})\\
    &=-\max_{J\in \cI:i\in J}\,\min_{I\in \cI:\,I\subseteq J,\,i\in I}\,y_{I,(\lceil l_{I,J}\rceil)}\\
    &=-L_i.
\end{align*}
Thus, the lower bound is established and Step~\ref{itm:step_2} is proved.
\end{proof}

\begin{proof}[\underline{Proof of Step~\ref{itm:step_3}:}]
In this step, we would like to establish that there exists a $\hat{\theta}\in \cS$ such that $\hat{\theta}_i\geq U_i$. Consider any arbitrary $\hat{\theta}\in \cS$. It suffices to show that there exists an interval $\hat{J}$ (depends on $\hat{\theta}$) containing $i$ such that
    \[\hat\theta_i\geq \max_{\substack{I\in I: I\subseteq \hat{J},\; i\in I}} y_{I,(\lfloor u_{I,\hat{J}}\rfloor+1)},\]
    i.e.,
    \begin{equation}\label{eqn:req_lower_bd_lower_bd}
       \hat\theta_i\geq  y_{I,(\lfloor u_{I,\hat{J}}\rfloor+1)},\text{ for all }I\subseteq \hat{J}. 
    \end{equation}
    To this end, let us choose $\hat{J}=[a:b]$ to be the \emph{largest} interval containing $i$ such that
\[
\hat\theta_u\le \hat\theta_i,\qquad \forall u\in \hat{J}.
\]
We again turn back to the subgradient characterization of $\hat{\theta}$ given by Lemma~\ref{lem:qtvd-interval}. Fix an arbitrary subinterval $I=[c:d]\subseteq \hat{J}$. Summing the second inequality in \eqref{eq:gj-onesided} over $j\in I$ and using part $(iii)$ of Lemma~\ref{lem:qtvd-interval} gives
\begin{equation*}
z_{c-1}-z_d=\sum_{j=c}^d g_j \le \#\{j\in I:\hat\theta_j\ge y_j\}-\tau|I|.
\end{equation*}
By definition of $\hat{J}$ and using the fact that $\#\{j\in I:\hat\theta_i\ge y_j\}$ is a non-negative integer, we thus have
\begin{equation}
    \lceil z_{c-1}-z_d+\tau |I|\rceil  \le \#\{j\in I:\hat\theta_j\ge y_j\}\le \#\{j\in I:\hat\theta_i\ge y_j\}.
\end{equation}
Therefore, it follows from Lemma~\ref{lem:orderstat} that
\begin{equation}\label{eq:count-lower-start}
    \hat{\theta}_i\geq y_{I,(\lceil z_{c-1}-z_d+\tau |I|\rceil)}.
\end{equation}
The following lemma provides a lower bound on $z_{c-1}-z_d$ in terms of $\lambda$ and $C_{I,\hat{J}}$ by utilizing part $(1)$ of Lemma~\ref{lem:qtvd-interval} and the definition of $\hat{J}$.
\begin{lemma}\label{lemma:z_lower_bd}
    Consider any $\hat{\theta}\in \cS$ and fix any location $i \in [n]$. Let $\hat{J}=[a:b]$ be the \emph{largest} interval containing $i$ such that
\[
\hat\theta_u\le \hat\theta_i,\qquad \forall u\in \hat{J}.
\]
Then, for any $I=[c,d]\subseteq \hat{J}$, the following inequality holds: 
\begin{equation}\label{eq:z-lower-key-thm2}
z_{c-1}-z_d \ge -2\lambda\,C_{I,\hat{J}}.
\end{equation}
\end{lemma}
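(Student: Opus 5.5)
The plan is to prove Lemma~\ref{lemma:z_lower_bd} by the same case bookkeeping used for Lemma~\ref{lemma:z_upper_bd}. This time we exploit that $\hat{J}$ is a \emph{maximal} interval on which $\hat\theta$ is at most $\hat\theta_i$, and we use the sign constraints on the dual variables from part $(1)$ of Lemma~\ref{lem:qtvd-interval}. The basic facts are these.
\begin{itemize}
\item At the global boundary, $z_0=z_n=0$.
\item If $a>1$, maximality of $\hat{J}=[a:b]$ forces $\hat\theta_{a-1}>\hat\theta_i\ge\hat\theta_a$, so $\hat\theta_{a-1}>\hat\theta_a$ and therefore $z_{a-1}=\lambda$.
\item Symmetrically, if $b<n$ then $\hat\theta_{b+1}>\hat\theta_b$, which gives $z_b=-\lambda$.
\item At any interior cut point $k$ (that is, $a\le k\le b-1$), we only have the trivial bound $|z_k|\le\lambda$.
\end{itemize}

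With these facts, fix $I=[c:d]\subseteq\hat J$ and bound $z_{c-1}$ from below and $z_d$ from above, case by case. Write $\hat J=[a:b]$.
\begin{itemize}
\item For the left endpoint: if $c>a$, then $c-1$ is an interior cut and $z_{c-1}\ge-\lambda$. If $c=a>1$, then $z_{c-1}=\lambda$. If $c=a=1$, then $z_{c-1}=0$.
\item For the right endpoint: if $d<b$, then $z_d\le\lambda$. If $d=b<n$, then $z_d=-\lambda$. If $d=b=n$, then $z_d=0$.
\end{itemize}
Next we go through the four cases of Definition~\ref{defn:C_{IJ}}, according to which global endpoints $\hat J$ contains, and the subcases according to whether $I\subset\hat J$, $I=\hat J$, or $I$ shares exactly one endpoint with $\hat J$. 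For example, in case~1 ($1<a\le b<n$):
\begin{itemize}
\item $I\subset\hat J$ gives $z_{c-1}-z_d\ge-2\lambda=-2\lambda C$;
\item $I=\hat J$ gives $\lambda-(-\lambda)=2\lambda=-2\lambda C$;
\item sharing one endpoint gives at least $\lambda-\lambda=0$ or $-\lambda+\lambda=0$.
\end{itemize}
In case~2 ($a=1$, $b<n$):
\begin{itemize}
\item $I=\hat J$ gives $0+\lambda=\lambda=-2\lambda(-1/2)$;
\item $c=1$, $d<b$ gives $0-\lambda=-\lambda=-2\lambda(1/2)$;
\item $c>1$, $d=b$ gives $-\lambda+\lambda=0$;
\item interior $I$ gives $-2\lambda$.
\end{itemize}
Case~3 is the mirror image of case~2. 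Case~4 ($\hat J=[1:n]$) gives $0$ for $I=\hat J$, $-\lambda$ when exactly one endpoint is shared, and $-2\lambda$ for interior $I$. Every bound matches $-2\lambda C_{I,\hat J}$ exactly.

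I do not expect a real obstacle; the lemma is a sign-by-sign mirror of Lemma~\ref{lemma:z_upper_bd}. The only point needing care is the boundary justification: maximality of $\hat J$ gives a strict inequality at $a-1$ (respectively $b+1$). This makes the corresponding $z$ equal to exactly $\pm\lambda$ via the strict cases in part $(1)$ of Lemma~\ref{lem:qtvd-interval}, with the correct sign, because here $\hat\theta$ drops \emph{into} $\hat J$ rather than rising into it. I would state this fact first and then present the case table compactly.
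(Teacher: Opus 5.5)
Your proposal is correct and is essentially the paper's own proof. The paper also derives $z_{a-1}=\lambda$ (when $a>1$) and $z_b=-\lambda$ (when $b<n$) from the maximality of $\hat J$ via part (1) of Lemma~\ref{lem:qtvd-interval}, combines these with $z_0=z_n=0$ and $|z_k|\le\lambda$, and then checks the four cases of Definition~\ref{defn:C_{IJ}} one by one, obtaining exactly the bounds you list.
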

    \begin{remark}
    The proof of Lemma~\ref{lemma:z_lower_bd} can be found in \cite{chatterjee2026minmaxtrendfilteringgeneralizations}. However, we still include the proof in Appendix~\ref{appna} for self-containment. 
\end{remark}

Plugging this lower bound in \eqref{eq:count-lower-start}, we obtain
\begin{equation}\label{eq:u_IJhat_upper}
    \hat{\theta}_i\geq y_{I,(\lceil u_{I,\hat{J}}\rceil)}.
\end{equation}
So far in the proof, we have followed an extension of the roadmap laid out in \cite{chatterjee2026minmaxtrendfilteringgeneralizations} that uses interval identities for the two kinds of maximal intervals. However, notice that while the lower bound in \eqref{eq:u_IJhat_upper} holds for any quantile TVD solution, it is not our desired bound~\eqref{eqn:req_lower_bd_lower_bd} and in fact, is a weaker bound. For example, when $0\le u_{I,\hat{J}}<|I|$ and $u_{I,\hat J}\in\mathbb Z$, \(\lceil u_{I,\hat{J}}\rceil<\lfloor u_{I,\hat{J}}\rfloor+1,\)
and thus, the bound in~\eqref{eq:u_IJhat_upper} seems to be off by one order statistic from the required bound in~\eqref{eqn:req_lower_bd_lower_bd}. This is where our proof needs a substantially different ingredient from \cite{chatterjee2026minmaxtrendfilteringgeneralizations}. In particular, we consider an element $\hat{\theta}^{\max} \in \cS$ such that
\[\hat{\theta}^{\max}_i=\max\{\theta_i:\;\theta \in \cS\}.\]
Note that in Step~\ref{itm:step_1}, we have already proved that $\cS_i$ is compact and hence, the maximum above is well-defined. Now, we develop a new perturbation argument and crucially exploit the maximality of $\hat{\theta}^{\max}$ to conclude that \eqref{eq:u_IJhat_upper} indeed implies \eqref{eqn:req_lower_bd_lower_bd} for $\hat{\theta}^{\max}$.
 We consider the following four possible cases.
\begin{itemize}
\item \underline{\textbf{Case A: $u_{I,\hat{J}}<0$.}} In this case, \(y_{I,(\lfloor u_{I,\hat{J}}\rfloor+1)}=-\infty\) and thus, 
\[\hat\theta^{\max}_i\ge y_{I,(\lfloor u_{I,\hat{J}}\rfloor+1)}.\]

\item \underline{\textbf{Case B: $0\le u_{I,\hat{J}}<|I|$ and $u_{I,\hat{J}}\notin\mathbb Z$.}}
In this case,
\[
\lceil u_{I,\hat{J}}\rceil=\lfloor u_{I,\hat{J}}\rfloor+1.
\]
Plugging this into~\eqref{eq:u_IJhat_upper}, we get
\[
\hat\theta_i^{\max}\ge y_{I,(\lfloor u_{I,\hat{J}}\rfloor+1)}.
\]

\item \underline{\textbf{Case C: $0\le u_{I,\hat{J}}<|I|$ and $u_{I,\hat J}\in\mathbb Z$.}}
Note that in this case,
\[\lceil u_{I,\hat{J}}\rceil<\lfloor u_{I,\hat{J}}\rfloor+1,\]
and thus, the bound in~\eqref{eq:u_IJhat_upper} seems to be off by one order statistic from the required bound in~\eqref{eqn:req_lower_bd_lower_bd}. However, we now introduce a novel perturbation argument and crucially utilize the maximality of $\hat{\theta}^{\max}$ at location $i$ to establish that for our specific choice of $\hat{\theta}^{\max}$ and $\hat{J}$, \eqref{eq:u_IJhat_upper} indeed implies the required lower bound.
\begin{proposition}[One order statistic upgrade]\label{lem:perturb-upper}
 Fix any location $i \in [n]$ and define
    \[\hat\theta_i^{\max}:=\max\{\theta_i:\theta\in\mathcal \cS\}.\]
    Moreover, let $\hat{\theta}^{\max}$ be an element in $\cS$ corresponding to $\hat{\theta}^{\max}_i$.
Let $\hat{J}=[a:b]$ be the largest interval containing $i$ such that
\[
\hat\theta^{\max}_u \le \hat\theta^{\max}_i
\qquad \forall u\in \hat J.
\]
Fix any $I=[c:d]\subseteq \hat J$ with $i\in I$ and suppose
$u_{I,\hat J}\in\mathbb Z$ and $0\le u_{I,\hat J}<|I|$.
Then,
\[
\hat\theta_i^{\max}\ge y_{I,(u_{I,\hat J}+1)}.
\]
\end{proposition}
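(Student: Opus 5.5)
We argue by contradiction. Write $u:=u_{I,\hat J}\in\mathbb Z$ with $0\le u<|I|$, and suppose $\hat\theta^{\max}_i< y_{I,(u+1)}$. From \eqref{eq:u_IJhat_upper} we already know $\hat\theta^{\max}_i\ge y_{I,(u)}$ (trivial if $u=0$). The goal is to build another minimizer whose $i$-th coordinate is strictly larger than $\hat\theta^{\max}_i$.

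For small $\epsilon>0$, consider the perturbation $\theta^\epsilon:=\hat\theta^{\max}+\epsilon\,\one_{\tilde I}$, where $\tilde I\ni i$ is a suitable interval inside $\hat J$. The natural first choice is $\tilde I=\{u\in \hat J:\hat\theta^{\max}_u=\hat\theta^{\max}_i\}$ restricted to the maximal contiguous block containing $i$. On this block the fitted values equal the local maximum of $\hat J$. For $\epsilon$ below the gap to every other distinct value of $\hat\theta^{\max}$ and of $y$, nothing changes sign: no coordinate crosses a data point or a neighbouring fitted value. Hence $F(\theta^\epsilon)$ is affine in $\epsilon\in[0,\epsilon_0]$, and it suffices to show its right derivative is $\le 0$. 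Optimality forces the derivative to be $\ge0$, so the slope would be exactly $0$. Then $\theta^\epsilon\in\cS$ with $\theta^\epsilon_i>\hat\theta^{\max}_i$, contradicting maximality.

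The right derivative equals
\[
\sum_{j\in\tilde I}\bigl(\one\{\hat\theta_j\ge y_j\}-\tau\bigr)+\lambda\,(\text{boundary TV terms}).
\]
The boundary terms are $-\lambda$ at each endpoint of $\tilde I$ whose outside neighbour is strictly larger (only possible at endpoints of $\hat J$, by its maximality) and $+\lambda$ where the neighbour is smaller. Matching these boundary contributions with the four cases of Definition~\ref{defn:C_{IJ}} identifies them as $2\lambda C_{\tilde I,\hat J}$, up to sign conventions, exactly as in the bookkeeping of Lemma~\ref{lemma:z_lower_bd}. Under the hypothesis $\hat\theta_i<y_{I,(u+1)}$, at most $u$ points of $I$ satisfy $y_j\le\hat\theta^{\max}_i$. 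The derivative is then at most $u-\tau|I|+2\lambda C_{I,\hat J}=0$, provided $\tilde I=I$.

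The main obstacle is that the block $\tilde I$ need not coincide with $I$, because the statement concerns an arbitrary $I\subseteq\hat J$. My plan is to perturb on $I$ itself rather than on the flat block. Inside $I$, coordinates with $\hat\theta_j<\hat\theta_i$ also move up, and this can create TV increases at interior jumps. To handle this, I would instead lift $\min(\hat\theta^{\max}+\epsilon\one_I,\ \hat\theta^{\max}_i+\epsilon)$, i.e.\ raise by $\epsilon$ only those coordinates in $I$ lying within $\epsilon$ of the top, or else use the lattice/convexity structure. Alternatively, one can invoke the subgradient identity on $I$ and on the flat block jointly: combine the $z$-identity on the block with the count bound on $I$ to show that the block-derivative is also $\le0$. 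I expect this reconciliation of $I$ with the flat block, presumably the content of the later Lemma~\ref{lem:J_hat_in_tilde_I}, to be the delicate step, requiring careful case analysis of the endpoints of $I$ relative to $\hat J$ and $\{1,n\}$.
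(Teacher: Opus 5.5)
\textbf{There is a genuine gap: you never complete the derivative bound for the one perturbation that works.} Your overall plan matches the paper's: push $\hat\theta^{\max}$ upward on an interval containing $i$, note that $F$ is affine along that ray for small $t$, show the right derivative is $\le 0$, and contradict the maximality of $\hat\theta^{\max}_i$. The paper uses exactly $\theta(t)=\hat\theta^{\max}+t\,\one_I$ on the given $I=[c:d]$. Your reason for abandoning it, that it ``can create TV increases at interior jumps'', is mistaken. Adding the same constant to every coordinate of a contiguous block leaves each difference $\theta_{k+1}-\theta_k$ with $k,k+1\in I$ unchanged. Only the edges $(c-1,c)$ (if $c>1$) and $(d,d+1)$ (if $d<n$) move.

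The detours you sketch are unnecessary, and as stated they do not work. Perturbing the flat block, or only the near-top coordinates of $I$, loses the count bound. That bound comes from the hypothesis $\hat\theta^{\max}_i<y_{I,(u+1)}$ and holds only for the full set $I$. Also, Lemma~\ref{lem:J_hat_in_tilde_I} is not this reconciliation step. It rules out the case $u_{I,\hat J}\ge |I|$.

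\textbf{The missing step is the boundary-edge bound for $I$ itself.} The right derivative of $\lambda\,TV(\theta(t))$ at $0$ is $\lambda(\sigma_{c-1}\one\{c\neq 1\}-\sigma_d\one\{d\neq n\})$ with each $\sigma\in\{\pm1\}$. You need this to be at most $2\lambda C_{I,\hat J}$ in every case of Definition~\ref{defn:C_{IJ}}. This follows from the trivial bound $\pm\lambda$ per edge together with the maximality of $\hat J=[a:b]$:
\begin{itemize}
\item If $c=a>1$, then $\hat\theta^{\max}_{a-1}>\hat\theta^{\max}_i\ge\hat\theta^{\max}_a$, so raising $I$ shrinks that jump and the edge contributes exactly $-\lambda$.
\item Symmetrically, if $d=b<n$, then $\hat\theta^{\max}_{b+1}>\hat\theta^{\max}_i\ge\hat\theta^{\max}_b$, and that edge also contributes $-\lambda$.
\item An endpoint at $1$ or $n$ contributes nothing; this produces the $\pm\tfrac12$ values of $C_{I,\hat J}$.
\end{itemize}
Running through the cases gives the bound: $I\subset\hat J$ gives $\le 2\lambda$; one shared non-global endpoint gives $\le 0$; $I=\hat J$ interior gives $-2\lambda$; and the cases where $\hat J$ contains $1$ or $n$ follow the same way.

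Your count bound is already correct. Since $\hat\theta^{\max}_j\le\hat\theta^{\max}_i$ on $I\subseteq\hat J$, we have $\#\{j\in I:\hat\theta^{\max}_j\ge y_j\}\le\#\{j\in I:y_j\le\hat\theta^{\max}_i\}\le u$. Combining the two bounds, the right derivative is at most $u-\tau|I|+2\lambda C_{I,\hat J}=0$. The rest of your argument then goes through verbatim with $\tilde I=I$.
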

\begin{proof}[Proof of Proposition~\ref{lem:perturb-upper}]
Let $K:=u_{I,\hat J}$ and set
\[
q:=y_{I,(K+1)}.
\]
Suppose for contradiction that $\hat\theta_i^{\max}<q$.
Define
\begin{equation}\label{defn:theta_t}
    \theta(t):=\hat\theta^{\max}+t\,\one_I,\qquad t\ge 0,
\end{equation}

and $\phi(t):=F(\theta(t))$. We will argue that $\phi$ is affine in some $[0,\eta_1]$ for some small enough $\eta_1>0$ and $\phi'_{+}(0)\leq 0$. To that end, for $j\in I$ write
\[
u_j(t):=y_j-\theta_j(t)=y_j-\hat\theta^{\max}_j-t.
\]
If $y_j>\hat\theta^{\max}_j$, then $u_j(0)>0$ and the sign remains positive for
$t<y_j-\hat\theta^{\max}_j$.
If $y_j=\hat\theta^{\max}_j$, then $u_j(t)=-t<0$ for all $t>0$.
If $y_j<\hat\theta^{\max}_j$, then $u_j(t)<0$ for all $t\ge 0$.
Hence, letting
\[
\eta:=\min_{j\in I:\,y_j>\hat\theta^{\max}_j}
(y_j-\hat\theta^{\max}_j)>0,
\]
(with $\eta=+\infty$ if the index set is empty), we conclude that $u_j(t)$ does not change sign for $t\in [0,\eta]$ for any $j \in I.$ Thus, from the definition of $\rho_{\tau}$, it follows that each $\rho_\tau(u_j(t))$ is affine on $[0,\eta]$,
and so is their sum. Thus,
\[
\frac{d}{dt}\rho_\tau(u_j(t))\Big|_{0^+}
=
\begin{cases}
-\tau, & \hat\theta^{\max}_j<y_j,\\
1-\tau, & \hat\theta^{\max}_j\ge y_j.
\end{cases}
\]
Summing gives
\[
\frac{d}{dt}\sum_{j=c}^d \rho_\tau(y_j-\theta_j(t))\Big|_{0^+}
=
-\tau|I|+\#\{j\in I:\hat\theta^{\max}_j\ge y_j\}.
\]

Now, we turn our attention to the penalty term. Only the two boundary edges across $\partial \hat{J}$ depend on $t$ in the TV term. In the following lemma, we establish that $TV(\theta(t))$ is affine in a neighborhood around $0$.
\begin{lemma}\label{lemma:TV_is_affine}
    Let $\theta(t)$ be as defined in~\eqref{defn:theta_t}. Then, for small enough $t>0$, it holds that
    \begin{equation}\label{eqn:penalty_term_is_affine}
    TV(\theta(t))-TV(\theta(0))=(\sigma_{c-1}\one\{c\neq 1\}-\sigma_d\one\{d\neq n\})t,
\end{equation}
where for $c>1$ and $d<n$,
\[
\sigma_{c-1}:=\operatorname{sign}_+\bigl(\hat\theta^{\max}_c-\hat\theta^{\max}_{c-1}\bigr),
\qquad
\sigma_d:=\operatorname{sign}_-\bigl(\hat\theta^{\max}_{d+1}-\hat\theta^{\max}_d\bigr),
\]
and $\operatorname{sign}_+,\;\operatorname{sign}_-$ are defined as
\[
\operatorname{sign}_+(x):=
\begin{cases}
+1,& x\ge 0,\\
-1,& x<0,
\end{cases}
\qquad
\operatorname{sign}_-(x):=
\begin{cases}
+1,& x>0,\\
-1,& x\le 0.
\end{cases}
\]
\end{lemma}
Hence, we conclude that
\[\frac{d}{dt}\Big(\lambda\sum_{k}|\theta_{k+1}(t)-\theta_k(t)|\Big)\Big|_{0^+}
=
\lambda(\sigma_{c-1}\one\{c\neq 1\}-\sigma_d\one\{d\neq n\}).\]
Thus, we conclude that there exists $0<\eta_1\leq \eta$ such that $\phi$ is affine in $[0,\eta_1]$ and 
\begin{equation}\label{eqn:phi_deriv}
    \phi'_+(0)=-\tau|I|+\#\{j\in I:\hat\theta^{\max}_j\ge y_j\}+\lambda(\sigma_{c-1}\one\{c\neq 1\}-\sigma_d\one\{d\neq n\}).
\end{equation}
Now, since $I\subseteq J$ and $\hat\theta^{\max}_j\le\hat\theta^{\max}_i$ for all $j\in I$,
\[
\#\{j\in I:\hat\theta^{\max}_j\ge y_j\}
\le
\#\{j\in I:y_j\le \hat\theta_i^{\max}\}.
\]
However, because we assumed $\hat\theta_i^{\max}<q=y_{I,(K+1)}$,
\[
\#\{j\in I:y_j\le \hat\theta_i^{\max}\}\le \#\{j\in I:y_j< y_{I,(K+1)}\}= K.
\]
Thus,
\begin{equation}\label{eq:data-bound-full}
-\tau|I|
+\#\{j\in I:\hat\theta^{\max}_j\ge y_j\}
\leq
-\tau|I|+K
=
-2\lambda C_{I,\hat{J}},
\end{equation}
where the last equality follows from the definition of $u_{I,\hat{J}}=K.$
Now, in the following lemma, we establish an upper bound on the third term of~\eqref{eqn:phi_deriv}.
\begin{lemma}\label{lemma:tv-bound-full-lemma}
    \begin{equation}\label{eq:tv-bound-full-lemma}
\lambda(\sigma_{c-1}\one\{c\neq 1\}-\sigma_d\one\{d\neq n\})\le 2\lambda C_{I,J},
\end{equation}
where the associated quantities are defined in Lemma~\ref{lemma:TV_is_affine}.
\end{lemma}
Plugging in \eqref{eq:data-bound-full} and \eqref{eq:tv-bound-full-lemma} in \eqref{eqn:phi_deriv}, we conclude that $\phi'_+(0)\le 0$. However, since $\hat\theta^{\max}$ minimizes $F$, $\phi'_+(0)\ge 0$. Thus, it must holds that $\phi'_+(0)=0$.
However, we have already argued that $\phi$ is affine on $[0,\eta_1]$. Therefore, it must hold that $\phi(t)=\phi(0)$ for $t\in[0,\eta_1]$. Choose $\varepsilon\in(0,\eta_1]$ and define
$\tilde\theta=\hat\theta^{\max}+\varepsilon\one_I$.
Then $\tilde\theta\in\mathcal \cS$ and
$\tilde\theta_i>\hat\theta_i^{\max}$,
contradiction! Thus, the assumption that $\hat\theta_i^{\max}<q$ must be wrong and thus, the proof of Proposition~\ref{lem:perturb-upper} is complete.
\end{proof}
\item \underline{\textbf{Case D: $ u_{I,\hat{J}}\geq |I|$.}}
In the following lemma, we show that for our choice of $\hat{\theta}^{\max}$ and $\hat{J}$, it holds that $u_{I,\hat{J}}<|I|$ for all $I\subseteq \hat{J}$ containing $i$. Thus, case D never arises.
\begin{lemma}\label{lem:J_hat_in_tilde_I}
    Fix any location $i \in [n]$ and define
    \[\hat\theta_i^{\max}:=\max\{\theta_i:\theta\in\mathcal \cS\}.\]
    Moreover, let $\hat{\theta}^{\max}$ be an element in $\cS$ corresponding to $\hat{\theta}^{\max}_i$ and let $\hat{J}=[a:b]$ be the \emph{largest} interval containing $i$ such that
\[
\hat\theta^{\max}_u\le \hat\theta^{\max}_i,\qquad \forall u\in \hat{J}.
\]
Then, it holds that $u_{I,\hat{J}}<|I|$ for all $I\subseteq \hat{J}$ such that $i\in I$.

\end{lemma}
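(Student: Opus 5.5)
The plan is to reduce the claim to the single configuration $I=\hat J$ and then rule that configuration out by the same perturbation argument used in Proposition~\ref{lem:perturb-upper}.

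\emph{Reduction.} Since $u_{I,\hat J}=\tau|I|-2\lambda C_{I,\hat J}$ and $\tau<1$, the inequality $u_{I,\hat J}\ge |I|$ forces
\[
-2\lambda C_{I,\hat J}\ \ge\ (1-\tau)|I|\ >\ 0 .
\]
In particular $\lambda>0$ and $C_{I,\hat J}<0$. Inspecting Definition~\ref{defn:C_{IJ}}, negative values of $C_{I,\hat J}$ occur only when $I=\hat J$ and $\hat J\neq[1:n]$. In that situation $C_{\hat J,\hat J}=-1$ if neither endpoint of $\hat J$ is a global boundary point, and $C_{\hat J,\hat J}=-1/2$ if exactly one is. Thus it suffices to show that
\[
(1-\tau)|\hat J|\le -2\lambda C_{\hat J,\hat J}
\]
is impossible when $\hat J=[a:b]\neq[1:n]$.

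\emph{Perturbation.} Suppose, for contradiction, that this inequality holds. Set $\theta(t)=\hat\theta^{\max}+t\one_{\hat J}$ and $\phi(t)=F(\theta(t))$.

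For the loss term, argue exactly as in the proof of Proposition~\ref{lem:perturb-upper}. The sum $\sum_{j\in\hat J}\rho_\tau(y_j-\theta_j(t))$ is affine on some $[0,\eta]$ with $\eta>0$. Its right derivative at $0$ is
\[
-\tau|\hat J|+\#\{j\in\hat J:\hat\theta^{\max}_j\ge y_j\}\ \le\ (1-\tau)|\hat J|,
\]
using only the trivial bound that the count is at most $|\hat J|$.

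For the TV term, use the maximality of $\hat J$. If $a>1$, then $\hat\theta^{\max}_{a-1}>\hat\theta^{\max}_i\ge\hat\theta^{\max}_a$. Similarly, if $b<n$, then $\hat\theta^{\max}_{b+1}>\hat\theta^{\max}_i\ge\hat\theta^{\max}_b$. Hence each boundary edge that is not a global boundary has a strict gap, and raising $\hat J$ by $t$ shrinks that gap by exactly $t$ for all small $t$. The interior edges of $\hat J$ are unchanged. So $\lambda\,TV(\theta(t))$ is affine near $0$ with slope
\[
-\lambda\bigl(\one\{a>1\}+\one\{b<n\}\bigr)=2\lambda C_{\hat J,\hat J}.
\]
This is consistent with Lemma~\ref{lemma:TV_is_affine}, where $\sigma_{a-1}=-1$ and $\sigma_b=+1$.

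\emph{Contradiction.} Combining the two slopes, $\phi$ is affine on some $[0,\eta_1]$ with $\eta_1>0$, and
\[
\phi'_+(0)\le(1-\tau)|\hat J|+2\lambda C_{\hat J,\hat J}\le 0 .
\]
Minimality of $\hat\theta^{\max}$ gives $\phi'_+(0)\ge0$, so $\phi'_+(0)=0$ and $\phi$ is constant on $[0,\eta_1]$. Therefore $\hat\theta^{\max}+\varepsilon\one_{\hat J}\in\cS$ for small $\varepsilon>0$. Since $i\in\hat J$, this solution has $i$-th coordinate strictly larger than $\hat\theta^{\max}_i$, contradicting the definition of $\hat\theta^{\max}_i$.

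The only delicate point I expect is the strictness of the boundary gaps, which is what makes the TV term exactly affine rather than merely convex near $t=0$. That strictness comes directly from the choice of $\hat J$ as the \emph{largest} interval on which $\hat\theta^{\max}\le\hat\theta^{\max}_i$. The excluded case $\hat J=[1:n]$ needs no argument, since there $C_{\hat J,\hat J}=0$ and the reduction step already gives $u_{\hat J,\hat J}=\tau n<n$.
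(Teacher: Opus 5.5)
Your proposal is correct and follows essentially the same argument as the paper: the sign of $C_{I,\hat J}$ reduces everything to $I=\hat J\neq[1:n]$, and the perturbation $\hat\theta^{\max}+t\one_{\hat J}$ makes $\phi$ affine near $0$ with $\phi'_+(0)\le |\hat J|-u_{\hat J,\hat J}\le 0$, which contradicts the maximality of $\hat\theta^{\max}_i$. Your derivation of the strict boundary gaps $\hat\theta^{\max}_{a-1}>\hat\theta^{\max}_a$ and $\hat\theta^{\max}_{b+1}>\hat\theta^{\max}_b$ from the maximality of $\hat J$ is exactly what the TV slope computation needs; the paper's text states the first of these with the inequality reversed, but its computation actually uses the direction you give.
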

The proof uses a perturbation argument very similar to Proposition~\ref{lem:perturb-upper} and is deferred to Appendix~\ref{appna}.

\end{itemize}
Combining the four cases, we conclude the proof of~\ref{eqn:req_lower_bd_lower_bd}. Thus, $\hat{\theta}^{\max}_i\geq U_i$. Now, along the same lines of the proof of Step~\ref{itm:step_2}, where we showed why the upper bound $U_i$ on $\hat{\theta}_i$ implies the lower bound of $L_i$, one can similarly conclude that \(\hat{\theta}^{\min}_i\leq L_i,\) where $\hat{\theta}^{\min}$ is an element of $\cS$ such that
\[\hat{\theta}^{\min}_i=\min\{\theta:\;\theta \in \cS\}.\]
Thus, proof of Step~\ref{itm:step_3} and therefore, part $(1)$ of Theorem~\ref{thm:minmax_bound} is now proved.
\end{proof}

\subsection{Proof of Part $(2)$}\label{sec:minmax_2}
By part $(1)$, for each $i\in [n]$, there exists $\hat{\theta}^{(i)}\in \cS$ such that \(\hat{\theta}^{(i)}_i=U_i.\) Now, define $\tilde{\theta}^{(1)}:=\hat{\theta}^{(1)}$ and for $2\leq k \leq n$, define $\tilde{\theta}^{(k)}:=\tilde{\theta}^{(k-1)}\vee \hat{\theta}^{(k)},$ where $\vee$ denotes coordinatewise maximum. Observe that since we know from part $(1)$ that $\hat{\theta}^{(i)}_j\leq U_j$ for any $i,j\in [n]$, thus we can easily conclude that $\tilde{\theta}^{(n)}_i=U_i$ for all $i\in [n]$. Now, the following lemma tells us that each $\tilde{\theta}^{(k)}$ is an element of $\cS$.
\begin{lemma}\label{lemma:minmax_preserves_qtltvd_tvdonly}
     Let $0<\tau_1 \leq \tau_2<1$ and let
\[
\hat{\theta}^{(i)} \in \cS^{\tau_i},\quad i=1,2.
\]
Define
\[
\tilde{\theta}^{(1)} = \hat{\theta}^{(1)} \wedge \hat{\theta}^{(2)},\quad\tilde{\theta}^{(2)} = \hat{\theta}^{(1)} \vee \hat{\theta}^{(2)},
\]
where $\vee$ and $\wedge$ denote coordinatewise maximum and minimum.

Then,
\[
\tilde{\theta}^{(i)} \in \cS^{\tau_i},\quad i=1,2.
\]
\end{lemma}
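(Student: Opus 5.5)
The plan is the standard exchange argument: show that the two objectives, evaluated at the meet and join, sum to at most their values at the original pair. Write $F^{\tau}(\theta)=\sum_j\rho_\tau(y_j-\theta_j)+\lambda TV(\theta)$ and set $a=\hat\theta^{(1)}$, $b=\hat\theta^{(2)}$. The target inequality is
\begin{equation*}
F^{\tau_1}(a\wedge b)+F^{\tau_2}(a\vee b)\le F^{\tau_1}(a)+F^{\tau_2}(b).
\end{equation*}
This suffices. By optimality, $F^{\tau_1}(a\wedge b)\ge F^{\tau_1}(a)$ and $F^{\tau_2}(a\vee b)\ge F^{\tau_2}(b)$. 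Together with the display, both of these must be equalities, so $a\wedge b\in\cS^{\tau_1}$ and $a\vee b\in\cS^{\tau_2}$.

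The display splits into the penalty part and the loss part, handled separately.

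\emph{Penalty part.} We need submodularity of $TV$: $TV(a\wedge b)+TV(a\vee b)\le TV(a)+TV(b)$. Since $TV$ is a sum over edges, it suffices to prove, for reals $a_k,a_{k+1},b_k,b_{k+1}$,
\begin{equation*}
|a_{k+1}\wedge b_{k+1}-a_k\wedge b_k|+|a_{k+1}\vee b_{k+1}-a_k\vee b_k|\le |a_{k+1}-a_k|+|b_{k+1}-b_k|.
\end{equation*}
There are two cases. If the same vector is larger at both $k$ and $k+1$, the left side is exactly the right side. Otherwise, say $a_k\ge b_k$ and $a_{k+1}\le b_{k+1}$; then the left side is $|a_{k+1}-b_k|+|b_{k+1}-a_k|$. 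This is bounded using the elementary fact that $|x-y|$ is submodular on $\mathbb{R}^2$ (equivalently, $g(x-y)$ with $g$ convex). For a direct check, note that $a_{k+1}-b_k$ and $b_{k+1}-a_k$ both lie between $a_{k+1}-a_k$ and $b_{k+1}-b_k$ and have the same sum. Convexity of $|\cdot|$ then gives the bound.

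\emph{Loss part.} Work coordinatewise. Where $a_j\le b_j$, the two sides coincide. Where $a_j>b_j$, we need
\begin{equation*}
\rho_{\tau_1}(y_j-b_j)+\rho_{\tau_2}(y_j-a_j)\le \rho_{\tau_1}(y_j-a_j)+\rho_{\tau_2}(y_j-b_j),
\end{equation*}
that is, $h(a_j)\le h(b_j)$ with $h(t):=\rho_{\tau_2}(y_j-t)-\rho_{\tau_1}(y_j-t)$. Using $\rho_\tau(x)=\tau x+x^-$, we get $h(t)=(\tau_2-\tau_1)(y_j-t)$. This is nonincreasing in $t$ since $\tau_1\le\tau_2$, so $a_j>b_j$ gives $h(a_j)\le h(b_j)$. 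The same computation also yields strict inequality when $\tau_1<\tau_2$, which will be used for part (3).

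Adding the penalty inequality (multiplied by $\lambda\ge0$) to the summed loss inequality gives the display, and the argument above concludes. The only step needing care is the edge-wise submodularity of $TV$ in the crossing case. I will handle it by reducing to convexity of $|\cdot|$ via the ``same sum, values sandwiched'' observation; everything else is a one-line computation.
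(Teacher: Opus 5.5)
Your proof is correct and takes essentially the paper's route: the exchange inequality $F^{\tau_1}(a\wedge b)+F^{\tau_2}(a\vee b)\le F^{\tau_1}(a)+F^{\tau_2}(b)$, combined with the optimality of $a$ and $b$ to force equality in both terms. The differences are cosmetic. The paper handles the loss globally via Lemma~\ref{lem:qtl_linearity}: the $\tau_1$-loss is modular, which leaves the term $(\tau_2-\tau_1)\sum_j(\tilde\theta^{(2)}_j-\hat\theta^{(2)}_j)\ge 0$, exactly the sum of your coordinatewise gaps. The paper also proves TV submodularity through the more general Lemma~\ref{lemma:general_submodular} for $\sum w_{ij}\phi_{ij}(\theta_i-\theta_j)$ with convex $\phi_{ij}$, using the same sandwich-and-equal-sum convexity argument you give.
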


The above result is a consequence of the "submodularity" property of the TV penalty and piecewise linear geometry of the quantile loss function and its proof will be provided in Section~\ref{sec:monotonicity}. Now, getting back to the original proof, by a simple inductive argument, it follows  that $\tilde{\theta}^{(n)}\in \cS$. Thus, we have proved the existence of a solution $\tilde{\theta}^{(n)}\in \cS$ that achieves the upper envelope bound $U_i$ at all locations $i\in[n]$ simultaneously. The existence of a solution that achieves the lower envelope bound $L_i$ at all locations can be proved similarly by taking coordinatewise minimum sequentially.

\subsection{Proof of Part $(3)$}\label{sec:minmax_3}

Let $0<\tau_1<\tau_2<1$ and pick any $\hat{\theta}^{\tau_i}\in \cS^{\tau_i}$, $i=1,2$. For $i=1,2$ let $F^{\tau_i}$ be the corresponding objective functions defined in \eqref{eqn:objective_fn}. Now, define
\[
\tilde{\theta}^{(1)} = \hat{\theta}^{\tau_1} \wedge \hat{\theta}^{\tau_2},\quad\tilde{\theta}^{(2)} = \hat{\theta}^{\tau_1} \vee \hat{\theta}^{\tau_2},
\]
where $\vee$ and $\wedge$ denote coordinatewise maximum and minimum. Then, if $\hat{\theta}^{\tau_1}_i>\hat{\theta}^{\tau_2}_i$ at any location $i\in [n]$, the proof of Theorem~\ref{thm:submodular_non_crossing} reveals that submodularity of TV penalty and piecewise linear geometry of the quantile loss implies
\[F^{\tau_1}(\tilde{\theta}^{(1)})+F^{\tau_2}(\tilde{\theta}^{(2)})<F^{\tau_1}(\hat{\theta}^{\tau_1})+F^{\tau_2}(\hat{\theta}^{\tau_2}).\]
This contradicts the fact $\hat{\theta}^{\tau_i}\in \cS^{\tau_i}$ for $i=1,2$ and hence, we conclude that crossing can not happen. For explicit mathematical details, we refer the reader to Section~\ref{sec:monotonicity}.

\section{Monotonicity of Penalized Quantile Regression with Submodular Penalty}\label{sec:monotonicity}  
As we have already seen in Theorem~\ref{thm:minmax_bound} that the quantile TVD estimator at different quantile levels with a common tuning parameter can never produce crossing solutions. We will now discuss more details on the proof of this phenomenon and generalize this to submodular penalties.
\begin{definition}
    Let $P:\mathcal{X}\mapsto \bR$, where $\mathcal X=\prod_{i=1}^n \mathcal X_i\subseteq \bR^n$ and each $\mathcal X_i\subseteq \bR$. Then, $P$ is called \textit{submodular} if and only if for all $(\mathbf{x},\mathbf{y})\in \mathcal X \times \mathcal X,$
    \[P(\mathbf{x})+P(\mathbf{y})\geq P\left(\mathbf{x}\vee \mathbf{y}\right)+P\left(\mathbf{x} \wedge \mathbf{y}\right),\]
    where $\vee$ and $\wedge$ represents coordinatewise maximum and minimum of the two vectors, respectively.
\end{definition}
 
 Submodularity is a well-studied concept in combinatorial optimization and it has found numerous applications in machine learning, computer vision or signal
processing, see \cite{Bach2011LearningWS, bach2019submodular} for a detailed overview on this matter. A direct implication of the definition is that the set of minimizers of a submodular function is a lattice. Combining the notion of submodularity with the piecewise linear geometry of the quantile loss function, we show that any penalized quantile regression estimator with submodular penalty can never produce crossing solutions. We formally state the result below.

 \begin{theorem}\label{thm:submodular_non_crossing}
     Fix a data vector $y\in \bR^n$. For any $\tau \in (0,1)$ and $\lambda\geq 0$, define
     \[G^{\tau}(\theta)=\sum_{j=1}^n \rho_{\tau}(y_j-\theta_j)+\lambda\,P(\theta),\;\;\theta \in \bR^n.\]
     Consider any $0<\tau_1<\tau_2<1$ and let $\hat{\theta}^{\tau_i}$ be \textit{any} minimizer of $G^{\tau_i}$, for $i=1,2$, respectively. If the penalty $P$ is submodular, then it holds that
     \[\hat{\theta}^{\tau_1}_i\leq \hat{\theta}^{\tau_2}_i,\quad \forall\;i\in [n].\]
 \end{theorem}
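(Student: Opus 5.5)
We argue by contradiction through an exchange argument. Write $\hat\theta^{(1)}=\hat\theta^{\tau_1}$ and $\hat\theta^{(2)}=\hat\theta^{\tau_2}$, and set $\tilde\theta^{(1)}=\hat\theta^{(1)}\wedge\hat\theta^{(2)}$ and $\tilde\theta^{(2)}=\hat\theta^{(1)}\vee\hat\theta^{(2)}$. Let $A:=\{j:\hat\theta^{(1)}_j>\hat\theta^{(2)}_j\}$, and suppose $A\neq\emptyset$. We will show
\[
G^{\tau_1}(\tilde\theta^{(1)})+G^{\tau_2}(\tilde\theta^{(2)})<G^{\tau_1}(\hat\theta^{(1)})+G^{\tau_2}(\hat\theta^{(2)}).
\]
Since $G^{\tau_1}(\tilde\theta^{(1)})\ge G^{\tau_1}(\hat\theta^{(1)})$ and $G^{\tau_2}(\tilde\theta^{(2)})\ge G^{\tau_2}(\hat\theta^{(2)})$ by optimality, this strict inequality is a contradiction, so $A=\emptyset$.

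\textbf{Penalty.} Submodularity of $P$ gives $P(\tilde\theta^{(1)})+P(\tilde\theta^{(2)})\le P(\hat\theta^{(1)})+P(\hat\theta^{(2)})$. Multiplying by $\lambda\ge0$ preserves this. So it suffices to prove a strict inequality for the loss parts alone.

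\textbf{Loss.} The loss parts are separable across coordinates, so we compare coordinate by coordinate. For $j\notin A$ we have $\tilde\theta^{(1)}_j=\hat\theta^{(1)}_j$ and $\tilde\theta^{(2)}_j=\hat\theta^{(2)}_j$, so those terms are unchanged. For $j\in A$ the values swap. Write $a=\hat\theta^{(2)}_j<b=\hat\theta^{(1)}_j$. The coordinate-$j$ change (new minus old) is
\[
\Delta_j=\bigl[\rho_{\tau_1}(y_j-a)-\rho_{\tau_1}(y_j-b)\bigr]-\bigl[\rho_{\tau_2}(y_j-a)-\rho_{\tau_2}(y_j-b)\bigr].
\]
Use the identity $\rho_\tau(x)=\tau x+(-x)_+$. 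The term $(-x)_+$ does not depend on $\tau$, so it cancels between the two brackets. This leaves
\[
\Delta_j=(\tau_1-\tau_2)\bigl[(y_j-a)-(y_j-b)\bigr]=(\tau_1-\tau_2)(b-a)<0.
\]
Hence the total change in the loss parts is $\sum_{j\in A}\Delta_j<0$. Combined with the penalty step, this yields the strict inequality and completes the contradiction.

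\textbf{Main point and the case $\tau_1=\tau_2$.} The key step is recognizing that the $\tau$-dependence of $\rho_\tau$ is exactly linear, which makes the swap gain equal to $(\tau_2-\tau_1)(b-a)$. No case analysis on the position of $y_j$ is needed. The same computation with $\tau_1=\tau_2$ gives $\Delta_j=0$. Then the total objective is non-increasing under $\wedge,\vee$, and since each objective is at least its minimum, both $\tilde\theta^{(i)}$ are minimizers. This yields Lemma~\ref{lemma:minmax_preserves_qtltvd_tvdonly} once one checks that $TV$ is submodular. For that, submodularity of $(u,v)\mapsto|u-v|$ follows by checking that it is a convex function of $u-v$.
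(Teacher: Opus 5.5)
Your proposal is correct and follows essentially the same exchange argument as the paper: pass to $\hat\theta^{\tau_1}\wedge\hat\theta^{\tau_2}$ and $\hat\theta^{\tau_1}\vee\hat\theta^{\tau_2}$, use submodularity for the penalty, and use the exact linearity of $\rho_\tau$ in $\tau$ to get a strict loss decrease of $(\tau_2-\tau_1)\sum_{j}(\hat\theta^{\tau_1}_j-\hat\theta^{\tau_2}_j)_+$. Your coordinatewise identity $\rho_\tau(x)=\tau x+(-x)_+$ is a per-coordinate version of the paper's Lemma~\ref{lem:qtl_linearity} combined with the separability of the loss, and your direct contradiction skips the paper's preliminary appeal to Lemma~\ref{lemma:minmax_preserves_qtltvd}, which the contradiction does not actually need.
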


The key technical ingredient for the proof of this theorem is the following lemma. It shows that for $0<\tau_1\leq \tau_2<1$ and any two minimizers of the corresponding objective functions of a penalized quantile regression estimator with a submodular penalty, the vector obtained from taking coordinatewise minimum (or maximum, respectively) remains minimizer of the objective function corresponding to $\tau_1$ (or \(\tau_2\), respectively).
 \begin{lemma}\label{lemma:minmax_preserves_qtltvd}
  Let $0<\tau_1 \leq \tau_2<1$ and let
\[
\hat{\theta}^{(i)} \in \argmin G^{\tau_i}(\theta),\quad i=1,2, 
\]
where
 \[G^{\tau}(\theta)=\sum_{j=1}^n \rho_{\tau}(y_j-\theta_j)+\lambda\,P(\theta),\;\;\theta \in \bR^n,\]
 and $P$ is submodular.
Define
\[
\tilde{\theta}^{(1)} = \hat{\theta}^{(1)} \wedge \hat{\theta}^{(2)},\quad\tilde{\theta}^{(2)} = \hat{\theta}^{(1)} \vee \hat{\theta}^{(2)},
\]
where $\vee$ and $\wedge$ denote coordinatewise maximum and minimum.

Then,
\[
\tilde{\theta}^{(i)} \in \argmin G^{\tau_i}(\theta),\quad i=1,2.
\]
 \end{lemma}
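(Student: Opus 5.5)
The plan is to prove the pointwise inequality
\[
\rho_{\tau_1}(y_j-a\wedge b)+\rho_{\tau_2}(y_j-a\vee b)\le \rho_{\tau_1}(y_j-a)+\rho_{\tau_2}(y_j-b)
\]
for all real $a,b,y_j$ whenever $\tau_1\le\tau_2$. We then sum it over $j$ and add submodularity of $P$. Write $L_\tau(\theta)=\sum_j\rho_\tau(y_j-\theta_j)$, $a=\hat\theta^{(1)}$ and $b=\hat\theta^{(2)}$. This gives
\[
G^{\tau_1}(\tilde\theta^{(1)})+G^{\tau_2}(\tilde\theta^{(2)})\le G^{\tau_1}(\hat\theta^{(1)})+G^{\tau_2}(\hat\theta^{(2)}).
\]
By optimality, $G^{\tau_1}(\tilde\theta^{(1)})\ge G^{\tau_1}(\hat\theta^{(1)})$ and $G^{\tau_2}(\tilde\theta^{(2)})\ge G^{\tau_2}(\hat\theta^{(2)})$. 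Combined with the display, both must hold with equality. Hence $\tilde\theta^{(i)}\in\argmin G^{\tau_i}$ for $i=1,2$. Here we use $\lambda\ge0$, so that $\lambda P$ is still submodular.

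The coordinatewise inequality is the only real content, and we prove it by cases. If $a_j\le b_j$, both sides coincide and there is nothing to prove. If $a_j>b_j$, we must show
\[
\rho_{\tau_1}(y-b)+\rho_{\tau_2}(y-a)\le \rho_{\tau_1}(y-a)+\rho_{\tau_2}(y-b),
\]
that is,
\[
h(b)\ge h(a),\qquad h(\theta):=\rho_{\tau_2}(y-\theta)-\rho_{\tau_1}(y-\theta).
\]
Note that $\rho_\tau(x)=\tau x+(-x)_+$, so
\[
h(\theta)=(\tau_2-\tau_1)(y-\theta).
\]
This is nonincreasing in $\theta$ because $\tau_2\ge\tau_1$. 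Hence $h(b)\ge h(a)$ for $b<a$, as required. Equivalently, the difference of the two quantile losses is affine with slope $-(\tau_2-\tau_1)$, which is exactly the "piecewise linear geometry" the paper alludes to.

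No step is a serious obstacle. The one point to check carefully is the bookkeeping in which the roles of $\tau_1$ and $\tau_2$ attach to the min and the max respectively. The identity $\rho_\tau(x)=\tau x+\max(-x,0)$ reduces everything to a linear term, which handles this cleanly.

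For later use in Theorem~\ref{thm:submodular_non_crossing}, the same computation also records strictness. If $\tau_1<\tau_2$ and $a_j>b_j$ for some $j$, then
\[
h(b_j)-h(a_j)=(\tau_2-\tau_1)(a_j-b_j)>0,
\]
so the summed inequality is strict. This yields the contradiction that rules out crossing.
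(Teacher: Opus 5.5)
Your proof is correct and is essentially the paper's argument. The paper also rests on the fact that $\rho_{\tau_2}-\rho_{\tau_1}$ is affine in $\theta$ (its Lemma~\ref{lem:qtl_linearity}). It writes the loss gap as a vanishing modular term for $Q_{\tau_1}$ plus $(\tau_2-\tau_1)\sum_j(\tilde\theta^{(2)}_j-\hat\theta^{(2)}_j)\ge 0$, then adds submodularity of $\lambda P$ and uses optimality to force equality. You simply do the same bookkeeping coordinatewise rather than on the summed losses.
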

 \begin{proof}[Proof of Lemma~\ref{lemma:minmax_preserves_qtltvd}]
It follows from Lemma~\ref{lem:qtl_linearity} that the quantile loss is a linear function of the quantile level, or in other words, 
$$Q_{\tau_{2}}\left(\tilde{\theta}^{(2)}\right)=Q_{\tau_{1}}\left(\tilde{\theta}^{(2)}\right)+\left(\tau_{2}-\tau_{1}\right) \sum_{i=1}^{n}\left(y_{i}-\tilde{\theta}^{(2)}_{i}\right),$$
$$Q_{\tau_{2}}\left(\hat{\theta}^{(2)}\right)=Q_{\tau_{1}}\left(\hat{\theta}^{(2)}\right)+\left(\tau_{2}-\tau_{1}\right) \sum_{i=1}^{n}\left(y_{i}-\hat{\theta}_{i}^{(2)}\right),$$
where $Q_{\tau}(\theta)=\sum_{j=1}^n \rho_{\tau}(y_j-\theta_j)$.
Therefore, 
\begin{align}\label{eqn:qtl_loss_decomposition}
&Q_{\tau_{1}}\left(\hat{\theta}^{(1)}\right)+Q_{\tau_{2}}\left(\hat{\theta}^{(2)}\right) 
-  Q_{\tau_{1}}\left(\tilde{\theta}^{(1)}\right)-Q_{\tau_{2}}\left(\tilde{\theta}^{(2)}\right) \nonumber\\
 &= Q_{\tau_{1}}\left(\hat{\theta}^{(1)}\right)+Q_{\tau_{1}}\left(\hat{\theta}^{(2)}\right)+\left(\tau_{2}-\tau_{1}\right) \sum\left(y_{i}-\hat{\theta}_{i}^{(2)}\right) -Q_{\tau_{1}}\left(\tilde{\theta}^{(1)}\right)-Q_{\tau_{1}}\left(\tilde{\theta}^{(2)}\right) 
 -\left(\tau_{2}-\tau_{1}\right) \sum\left(y_{i}-\tilde{\theta}^{(2)}_i\right) \allowdisplaybreaks\nonumber\\
&= \left[\underbrace{Q_{\tau_{1}}\left(\hat{\theta}^{(1)}\right)+Q_{\tau_{1}}\left(\hat{\theta}^{(2)}\right)-Q_{\tau_{1}}\left(\tilde{\theta}^{(1)}\right)-Q_{\tau_{1}}\left(\tilde{\theta}^{(2)}\right)}_{=0}\right]  +\left(\tau_{2}-\tau_{1}\right) \sum_{i=1}^{n}\left(\tilde{\theta}^{(2)}_i-\hat{\theta}_{i}^{(2)}\right)\allowdisplaybreaks\nonumber\\
&=\left(\tau_{2}-\tau_{1}\right) \sum_{i=1}^{n}\left(\tilde{\theta}^{(2)}_i-\hat{\theta}_{i}^{(2)}\right)\geq 0,
\end{align}
where the last inequality follows from the fact $\tau_2>\tau_1$ and the definition of $\tilde{\theta}^{(2)}.$ Now, by submodularity,
\begin{equation}\label{eqn:submodular_lattice}
\lambda\,P(\tilde{\theta}^{(1)})+\lambda\,P(\tilde{\theta}^{(2)})\leq \lambda\,P(\hat{\theta}^{(1)})+\lambda\,P(\hat{\theta}^{(2)}).
\end{equation}
Therefore, we conclude that
\[G^{\tau_1}(\tilde{\theta}^{(1)})+G^{\tau_2}(\tilde{\theta}^{(2)})\leq G^{\tau_1}(\hat{\theta}^{(1)})+G^{\tau_2}(\hat{\theta}^{(2)}).\]
However, note that since $\hat{\theta}^{(i)}$s are minimizers of $G^{\tau_i}$s for $i=1,2$ (respectively), so must be $\tilde{\theta}^{(i)}$ (respectively). Thus, the proof of the lemma is complete. 
 \end{proof}

  We now proceed to prove Theorem~\ref{thm:submodular_non_crossing}.
\begin{proof}[Proof of Theorem~\ref{thm:submodular_non_crossing}]
     For $0<\tau_1<\tau_2<1$, let $\hat{\theta}^{\tau_i}$ be a minimizer of $G^{\tau_i}$, for $i=1,2$, respectively.     
     Let, if possible, there exist $j\in[n]$ such that $\hat{\theta}^{\tau_1}_j> \hat{\theta}^{\tau_2}_j$. Firstly, by Lemma~\ref{lemma:minmax_preserves_qtltvd}, $\tilde{\theta}^{(i)}$ minimizes $G^{\tau_i}$ for $i=1,2$, respectively, where
     \[
\tilde{\theta}^{(1)} = \hat{\theta}^{(1)} \wedge \hat{\theta}^{(2)},\quad\tilde{\theta}^{(2)} = \hat{\theta}^{(1)} \vee \hat{\theta}^{(2)},
\] 
Moreover, it is easy to see that $\tau_2>\tau_1$ and $\hat{\theta}^{\tau_1}_j> \hat{\theta}^{\tau_2}_j$ imply that the inequality in~\eqref{eqn:qtl_loss_decomposition} is strict, i.e.,
\begin{align*}
&Q_{\tau_{1}}\left(\hat{\theta}^{(1)}\right)+Q_{\tau_{2}}\left(\hat{\theta}^{(2)}\right) 
-  Q_{\tau_{1}}\left(\tilde{\theta}^{(1)}\right)-Q_{\tau_{2}}\left(\tilde{\theta}^{(2)}\right) >0.
\end{align*}
 Thus, in view of \eqref{eqn:submodular_lattice}, we conclude that
\[G^{\tau_1}(\tilde{\theta}^{(1)})+G^{\tau_2}(\tilde{\theta}^{(2)})< G^{\tau_1}(\hat{\theta}^{(1)})+G^{\tau_2}(\hat{\theta}^{(2)}).\]
However, since $\hat{\theta}^{(i)}$s are minimizers of $G^{\tau_i}$s for $i=1,2$, respectively, the above inequality can not be strict. Thus, we arrive at contradiction and conclude that there can not exist any $j\in [n]$ with $\hat{\theta}^{(1)}_j>\hat{\theta}^{(2)}_j$!
\end{proof}
 
In the following lemma, we characterize a general class of submodular penalties that encompasses the TV penalty.

 \begin{lemma}\label{lemma:general_submodular}
     Let $E\subseteq [n]\times [n]$. For each let $(i,j)\in E$ such that $i\neq j$, let $\phi_{ij}$ be a convex function on $\bR$ and $w_{ij}\geq 0$ be a corresponding weight. Then, the following function is submodular on $\bR^n$:
     \begin{equation}\label{eqn:example_submodular}
         P(\theta):=\sum_{(i,j)\in E:\;i\neq j}w_{ij}\,\phi_{ij}(\theta_i-\theta_j).
     \end{equation}
 \end{lemma}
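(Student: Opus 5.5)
Submodularity is preserved under nonnegative linear combinations, since the defining inequality is linear in $P$. So it suffices to show that each summand $\theta\mapsto \phi(\theta_i-\theta_j)$ with $i\neq j$ and $\phi$ convex is submodular; multiplying by $w_{ij}\ge 0$ and summing over $E$ then gives the claim. Fix $\mathbf{x},\mathbf{y}\in\bR^n$. The summand depends only on the pair of coordinates $(\theta_i,\theta_j)$, and coordinatewise max/min act on those coordinates separately. Hence we may reduce to a two-dimensional statement: for $f(u,v)=\phi(u-v)$, show
\[
f(x_i,x_j)+f(y_i,y_j)\ \ge\ f(x_i\vee y_i,\,x_j\vee y_j)+f(x_i\wedge y_i,\,x_j\wedge y_j).
\]

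We split into cases by the ordering. If $x_i\ge y_i$ and $x_j\ge y_j$, or if both reverse inequalities hold, then $\{\mathbf{x}\vee\mathbf{y},\mathbf{x}\wedge\mathbf{y}\}$ restricted to $(i,j)$ equals $\{\mathbf{x},\mathbf{y}\}$, so equality holds. Up to swapping $\mathbf{x}$ and $\mathbf{y}$, the remaining case is $x_i\ge y_i$ and $x_j\le y_j$. Then the max vector is $(x_i,y_j)$ and the min vector is $(y_i,x_j)$. Set
\[
a=x_i-y_j,\quad d=y_i-x_j,\quad b=x_i-x_j,\quad c=y_i-y_j.
\]
The required inequality becomes $\phi(b)+\phi(c)\ge \phi(a)+\phi(d)$. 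Note that $b+c=a+d$, and that $a$ and $d$ both lie in $[\min(b,c),\max(b,c)]$. Indeed, $b\ge a$ and $b\ge d$ because $x_j\le y_j$ and $x_i\ge y_i$, while $c\le a$ and $c\le d$ for the same reasons. So $b$ is the largest of the four and $c$ the smallest.

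The key step is then the standard convexity fact. If $c\le a,d\le b$ with $a+d=b+c$, write $a=\alpha b+(1-\alpha)c$. Then $d=(1-\alpha)b+\alpha c$, and applying Jensen to each and adding yields $\phi(a)+\phi(d)\le\phi(b)+\phi(c)$. The degenerate case $b=c$ forces $a=d=b$ and is trivial.

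The only real care point is the case bookkeeping in the reduction. We need to confirm that the max/min vectors' $(i,j)$ coordinates are exactly as stated, and that the off-diagonal pairs $(i,j)$ and $(j,i)$ in $E$ are each handled by the same argument, since the argument never used $i<j$. The TV penalty is recovered with $E=\{(k+1,k)\}$, $\phi=|\cdot|$ and $w=1$, which supplies the submodularity used in Lemma~\ref{lemma:minmax_preserves_qtltvd_tvdonly}.
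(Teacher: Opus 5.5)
Your proposal is correct and follows essentially the same route as the paper: reduce to a single term $\phi(\theta_i-\theta_j)$ via nonnegative combinations, observe equality when the two coordinate orderings agree, and in the crossing case use the identity $a+d=b+c$ with $a,d\in[c,b]$ together with two applications of convexity. The only difference is cosmetic: the paper writes the interpolation weight explicitly as $t=(a_2-a_1)/\bigl((a_2-a_1)+(b_1-b_2)\bigr)$, while you define $\alpha$ implicitly and handle the degenerate case $b=c$ separately.
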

 Although this is a well known fact in applied mathematics (see \cite{bach2019submodular}), we provide a proof of this result in Appendix~\ref{appnb} for the sake of completeness.
\begin{remark}
   Lemma~\ref{lemma:minmax_preserves_qtltvd_tvdonly} now directly follows by applying Lemma~\ref{lemma:minmax_preserves_qtltvd} on the TV penalty, which is submodular by Lemma~\ref{lemma:general_submodular}.
\end{remark}

\begin{remark}
Other examples of submodular penalties, as shown in Lemma~\ref{lemma:minmax_preserves_qtltvd_tvdonly}, include weighted sums of convex functions applied to the differences $\theta_{j}-\theta_i$. The choice of the convex function yields a range of important estimators: taking the absolute value recovers the weighted TVD on a general graph (see \cite{wang2016trend, ye2021non}, etc. for examples of such estimators), the squared loss leads to a discrete analogue of quantile smoothing spline of \cite{koenker1994quantile}, and robust alternatives such as the Huber loss can also be employed. All of these estimators will possess the non crossing property, if the same tuning parameter is used for different quantile levels.  
\end{remark}

\section{Local Rate of Convergence for Quantile TVD}\label{sec:ptwise_error}
In this section, our goal is to derive new local rates of convergence for the quantile TVD estimator under the fixed-design univariate quantile sequence model by utilizing the pointwise characterization of quantile TVD. To this end, let us first recall the definition of the quantile sequence model: the data points $y_1,y_2,\cdots,y_n$ are independent and the conditional $\tau$-th quantile ($\tau\in (0,1)$) of the $i$-th data point given the value of the corresponding design point $x_i$ is given by $f^*(x_i)$ for some unknown function $f^*:\bR \mapsto \bR$. It is equivalent to saying 
\[y_i=f^*(x_i)+\epsilon_i,\;\;1\leq i \leq n,\]
    where $x_1,\cdots, x_n$ are design points in $[0,1]$ and the errors $\epsilon_i$ s are independent such that $\tau$-th quantile of $\epsilon_i$ is $0$ and $\tau \in (0,1)$. The goal is to estimate the true quantile curve $f^*$ under mild structural assumptions. In particular, when the design points are fixed, equispaced points in $[0,1]$ (i.e., $x_i=i/n$), it naturally makes sense to consider the quantile sequence model:
    \[y_i=\theta^*_i+\epsilon_i,\;\;1\leq i \leq n,\]
    where $\theta^*=f^*(i/n)$ and the goal here is to estimate the vector $\theta^*=(\theta^*_1,\theta^*_2,\cdots,\theta^*_n)\in \mathbb{R}^n$. 
    For some recent works related to quantile sequence model, see \cite{madrid2022risk,madrid2024quantile}, etc.\\
    \\
    We first briefly exhibit how the pointwise characterization leads to bounds on the pointwise errors of quantile TVD under the quantile sequence model. To this end, fix some $i\in [n]$. Observe that the following deterministic inequalities hold:
    \[k-\text{th order statistic of }\{(a_i+b_i)\}_{i=1}^{K}\leq \max_{i \in [K]} a_i+  k-\text{th order statistic of }\{b_i\}_{i=1}^{K},\]
     \[k-\text{th order statistic of }\{(a_i+b_i)\}_{i=1}^{K}\geq \min_{i \in [K]} a_i+  k-\text{th order statistic of }\{b_i\}_{i=1}^{K},\]
    where we follow the notion of order statistic defined in Definition~\ref{defn:Y_I,k}. 
    Therefore, since $y_i=\theta^*_i+\epsilon_i$, we have
     \begin{align}\label{eqn:bias_sd}
   & \hat{\theta}_i-\theta^*_i\nonumber\\
 &\leq  \min_{J \in \I_i}\max_{I\subseteq J: i \in I}\{y_{I,(\lfloor u_{I,J}\rfloor+1)}-\theta^*_i\}\nonumber\allowdisplaybreaks\\
 &\leq  \min_{J \in \I_i}\max_{I\subseteq J: i \in I}\{\max_{k \in I}(\theta^*_k-\theta^*_i)+\epsilon_{I,(\lfloor u_{I,J}\rfloor+1)}.\}\nonumber\allowdisplaybreaks\\
 &\leq  \min_{J \in \I_i}\left[\underbrace{\max_{k \in J} (\theta^*_k-\theta^*_i)}_{\text{ Positive bias }}+\underbrace{\max_{I\subseteq J: i \in I}\,\epsilon_{I,(\lfloor u_{I,J}\rfloor+1)}}_{\lambda-\text{dependent stochastic term }}\right].
\end{align}
In a similar fashion, we can conclude that
\begin{align}\label{eqn:bias_sd_2}
  \hat{\theta}_i-\theta^*_i\geq  \max_{J \in \I_i}\left[\underbrace{\min_{k \in J} (\theta^*_k-\theta^*_i)}_{\text{ Negative bias }}+\underbrace{\min_{I\subseteq J: i \in I}\,\epsilon_{I,(\lceil l_{I,J}\rceil)}}_{\lambda-\text{dependent stochastic term }}\right] 
\end{align}

\begin{remark}
    Note that the above decompositions can be interpreted as quantile TVD doing a local (non standard) multiscale bias-variance tradeoff. At this point, it should be mentioned that a similar decomposition was also seen in the mean TVD setting (see \cite{chatterjee2026minmaxtrendfilteringgeneralizations}), but with certain structural differences both in the bias and variance terms. In particular, the stochastic term there took the form of maximum or minimum of averages of the noise variables over different subintervals $I$. This structural difference makes our subsequent probabilistic analysis substantially different from \cite{chatterjee2026minmaxtrendfilteringgeneralizations}.
\end{remark}

In order to provide high probability bounds on the stochastic terms, we need to make some assumption on the distributions of the errors. We make the following assumption on the growth rate of the underlying CDF around the true quantile.
\begin{assumption}\label{ass:growth_condn}
   For some $0<\delta<\infty$ independent of $n$, there exists  constants $c_1>0$ such that for each $1\leq k \leq n$,
    \[c_1|t|\leq |F_k(t)-\tau|,\;\;\text{ for } |t|\leq \delta,\]
    where $F_k$ is the CDF of $\epsilon_k$.
\end{assumption}
    \begin{remark}
        Assumption~\ref{ass:growth_condn} ensures that the $\tau$-th quantile of the error $\epsilon_k$ is uniquely defined. It is a mild assumption in the analysis of quantile regression estimators, see Assumption A of \cite{madrid2022risk} and the subsequent discussion therein. A simple case where this assumption is satisfied is
that the $\epsilon_i$ are independent draws from any density with respect to the Lebesgue measure that is bounded away
from zero on any compact interval (includes heavy-tailed distributions like Cauchy). In contrast with the mean version of TVD considered in \cite{chatterjee2026minmaxtrendfilteringgeneralizations}, where subgaussianity of the errors was assumed, Assumption~\ref{ass:growth_condn} does not put any restriction on the tail decay of the CDF.
    \end{remark}
Now, we introduce the following notions of local bias and standard deviation. 
\begin{definition}\label{defn:local_bias_sd}
    Fix a signal $\theta^* \in \bR^n$. Let $i\in [1:n]$ be any location and $J$ be any discrete subinterval of $[1:n]$ such that $i \in J$. Let us define
    \[Bias_+(i,J,\theta^*):=\max_{k \in J}(\theta^*_k-\theta^*_i);\]
    \[Bias_-(i,J,\theta^*):=\min_{k \in J}(\theta^*_k-\theta^*_i);\]
    \[SD^{\tau}(i,J,\lambda):=\tilde{C}\left(\sqrt{\frac{\log n}{Dist(i,\partial J)}}+\tau \frac{\log n}{\lambda}+\frac{\lambda}{|J|}\right),\]
    where for an interval $J=\left[j_1: j_2\right] \subseteq[n]$, we denote its boundary points by $\partial J=\left\{j_1, j_2\right\}$ and define
$$
\operatorname{Dist}(i, \partial J)=\begin{cases}
    \min \left\{i-j_1+1, j_2-i+1\right\} & \text{ if } C_1 \log n\leq i \leq n-C_1 \log n;\\
     j_2-i+1 & \text{ if } i<C_1 \log n;\\
    i-j_1+1 & \text{ if }i >n-C_1 \log n,
\end{cases} 
$$
and the constants $\tilde{C}$ and $C_1$ are some universal constants.
\end{definition}
While such notions of bias or standard deviation are not common, they arise naturally out of the decomposition in \eqref{eqn:bias_sd}, \eqref{eqn:bias_sd_2} and our proof. We are now ready to state our pointwise risk bound, which holds \textit{simultaneously} for all locations $i\in [n]$ with polynomially high probability.
\begin{proposition}\label{prop:ptwise_error_bound_interior}
 Under the quantile sequence model with some $\tau \in (0,1)$, suppose Assumption \ref{ass:growth_condn} holds. Given any $c>1$, let $$C:=\frac{(c+3)\tau}{8c_1} \text{ and }C_1:=\frac{c+2}{2c_1^2\delta^2}.$$
   Then, there exists a large enough universal constant $\tilde{C}>0$ and a natural number $N_0$ (depends on $c,\, c_1,\,\tau$) such that for any $n\geq N_0$ and $\lambda\geq C\log n$, the following bounds hold for the estimator defined in \eqref{eqn:objective_fn} at \textit{all locations} $i=\lceil C_1 \log n\rceil,\ldots,  \lfloor n-C_1 \log n\rfloor$ \textit{simultaneously} with probability $\geq 1-4n^{-(c-1)}:$
    \[\tilde{L}_i\leq \hat{\theta}_i-\theta_i\leq \tilde{U}_i,\]
    where
    \[\tilde{L}_i:=\max_{\substack{J \in \mathcal{I}_i: J \subseteq [2:n-1] \\ |J| > 4\lambda/(c_1\delta),\; \text{Dist}(i, \partial J) \ge C_1 \log n}} Bias_-(i,J,\theta^*)-SD^{1-\tau}(i,J,\lambda),\]
    \[\tilde{U}_i:=\min_{\substack{J \in \mathcal{I}_i: J \subseteq [2:n-1] \\ |J| > 4\lambda/(c_1\delta),\; \text{Dist}(i, \partial J) \ge C_1 \log n}} Bias_+(i,J,\theta^*)+SD^{\tau}(i,J,\lambda),\]
and $\mathcal{I}_i$ is the set of discrete sub-intervals of $[1:n]$ containing $i$.
\end{proposition}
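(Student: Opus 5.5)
The plan is to start from the deterministic decompositions \eqref{eqn:bias_sd} and \eqref{eqn:bias_sd_2}, which follow from part (1) of Theorem~\ref{thm:minmax_bound}. It then suffices to show that, on one event of probability at least $1-4n^{-(c-1)}$, for every admissible pair $(i,J)$ (with $J\subseteq[2:n-1]$, $|J|>4\lambda/(c_1\delta)$ and $\mathrm{Dist}(i,\partial J)\ge C_1\log n$) we have
\[
\max_{I\subseteq J,\, i\in I}\epsilon_{I,(\lfloor u_{I,J}\rfloor+1)}\le SD^{\tau}(i,J,\lambda).
\]
The symmetric lower statement $\min_{I}\epsilon_{I,(\lceil l_{I,J}\rceil)}\ge -SD^{1-\tau}(i,J,\lambda)$ will then follow from the same argument applied to $-y$ at level $1-\tau$, using \eqref{eqn:order_stat_neg_vector} exactly as in Step~\ref{itm:step_2}. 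Taking min/max over $J$ then gives $\tilde L_i\le\hat\theta_i-\theta^*_i\le\tilde U_i$. Since $J\subseteq[2:n-1]$, only Case 1 of Definition~\ref{defn:C_{IJ}} occurs. There are two kinds of inner interval. If $I=J$, then $C_{I,J}=-1$ and $u=\tau|J|+2\lambda$. If $I$ touches exactly one endpoint of $J$, then $C_{I,J}=0$ and $u=\tau|I|$. If $I\subset J$ is strictly interior, then $C_{I,J}=1$ and $u=\tau|I|-2\lambda$.

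The bound for a fixed $I$ uses the fact that $\epsilon_{I,(m)}>t$ if and only if $\#\{j\in I:\epsilon_j\le t\}<m$. Each indicator $\one\{\epsilon_j\le t\}$ is Bernoulli with mean $F_j(t)\ge\tau+c_1 t$ for $0\le t\le\delta$, by Assumption~\ref{ass:growth_condn}. Hoeffding's inequality then gives
\[
\bP\big(\epsilon_{I,(m)}>t\big)\le\exp\!\Big(-2\big(|I|(\tau+c_1t)-m\big)_+^2/|I|\Big).
\]
I would split the inner intervals into three groups according to the sign of the adjustment.

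\emph{Interior $I$ ($m\approx\tau|I|-2\lambda+1$).} Here the target order statistic lies below the $\tau$-quantile, so we can take $t=0$. The deficit is $2\lambda-1$, giving probability at most $\exp(-2(2\lambda-1)^2/|I|)$. This is small only when $|I|\lesssim\lambda^2/\log n$. For larger $I$ I would use a Bernstein/Chernoff bound instead, with variance of order $\tau|I|$. A union bound over the $O(n^2)$ intervals then requires roughly $\lambda\gtrsim\tau\log n$, and this is where the condition $\lambda\ge C\log n$ with $C\propto\tau/c_1$ enters. Whatever slack remains is absorbed into the $\tau\log n/\lambda$ term of $SD^\tau$.

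\emph{One-sided $I$ ($m=\lfloor\tau|I|\rfloor+1$).} These intervals contain $i$ and reach an endpoint of $J$, so $|I|\ge\mathrm{Dist}(i,\partial J)$. Choosing $t\asymp\sqrt{\log n/|I|}/c_1$ yields probability at most $n^{-(c+2)}$. This requires $t\le\delta$, which holds because $|I|\ge C_1\log n$ with $C_1=(c+2)/(2c_1^2\delta^2)$, and this explains that constant. Since $|I|\ge\mathrm{Dist}(i,\partial J)$, the bound is at most $\sqrt{\log n/\mathrm{Dist}(i,\partial J)}$.

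\emph{$I=J$ ($m\approx\tau|J|+2\lambda+1$).} Now the target is above the $\tau$-quantile. Taking $t\asymp\lambda/(c_1|J|)+\sqrt{\log n/|J|}$ gives $|J|c_1t\ge2\lambda+1+\sqrt{(c+2)|J|\log n/2}$, which is enough for the Hoeffding bound. We need $t\le\delta$, and this is guaranteed by $|J|>4\lambda/(c_1\delta)$ together with the $C_1\log n$ lower bound; this produces the $\lambda/|J|$ term.

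A final union bound over at most $n^2$ intervals $I$ and at most $n^2$ intervals $J$ (or more efficiently, over $I$ alone, since each event depends only on $I$ and its type) gives failure probability at most $2n^{-(c-1)}$ for the upper bound and the same for the lower bound. This totals $4n^{-(c-1)}$, with $N_0$ chosen so that lower-order rounding terms such as the $+1$ are absorbed. I expect the interior-$I$ case to be the main obstacle. Its good behaviour relies on $\lambda$ being large relative to the Bernoulli fluctuations across all sizes $|I|$ at once, and getting the exact constant $C=(c+3)\tau/(8c_1)$ and the $\tau\log n/\lambda$ remainder requires a careful peeling over dyadic sizes of $|I|$, with the threshold $t$ chosen to depend on $|I|$.
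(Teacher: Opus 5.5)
Your skeleton matches the paper's own argument (the proof of Proposition~\ref{prop:ptwise_error_bound}, case (a)). You start from \eqref{eqn:bias_sd} and split the inner intervals by $C_{I,J}\in\{1,0,-1\}$. You rewrite $\{\epsilon_{I,(m)}>t\}$ as a deviation of an indicator count and apply Hoeffding with the drift $F_j(t)-\tau\ge c_1t$ from Assumption~\ref{ass:growth_condn}. You use thresholds of order $\sqrt{\log n/\mathrm{Dist}(i,\partial J)}$ for one-sided $I$ and $\lambda/(c_1|J|)+\sqrt{\log n/|J|}$ for $I=J$, and get the lower bound from $-\epsilon$ at level $1-\tau$. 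You also note that each tail event depends only on the interval and its type, so one union bound over at most $3n^2$ events covers all $(i,J)$ at once; this is tidier bookkeeping than the paper's. The gap is the interior case $C_{I,J}=1$, which is exactly the case you left open.

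As written, that case fails. With $t=0$ the margin is the fixed amount $2\lambda$, while $\#\{j\in I:\epsilon_j\le 0\}$ fluctuates on the scale $\sqrt{\tau(1-\tau)|I|}$. Take $J=[2:n-1]$, an admissible interior $I$ with $|I|\asymp n$, $\lambda\asymp\log n$, and noise with $F_j(0)=\tau$. The CLT then gives $\bP\bigl(\epsilon_{I,(\lfloor\tau|I|-2\lambda\rfloor+1)}>0\bigr)\to 1/2$. So no Bernstein or Chernoff refinement can rescue $t=0$, and your claim that the union bound then only needs $\lambda\gtrsim\tau\log n$ is false.

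A positive threshold is unavoidable. The missing observation is that it turns the margin into $2\lambda+c_1t|I|$, which grows linearly in $|I|$, so by AM--GM the Hoeffding exponent no longer depends on $|I|$:
\[
\frac{2\,(2\lambda+c_1t|I|)^2}{|I|}\;\ge\;16\,\lambda c_1 t\qquad\text{for every interior } I.
\]
Hence a single threshold $t_2\asymp\log n/(c_1\lambda)$ handles all interior $I$ simultaneously. The $n^2$-fold union bound is paid for by the $\log n$ in $t_2$. The condition $\lambda\ge C\log n$ serves only to keep $t_2\le\delta$, so that Assumption~\ref{ass:growth_condn} applies. This is the paper's step, and it makes peeling unnecessary. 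If you optimize an $|I|$-dependent threshold, as your closing sentence suggests, you just recover $\max_{|I|}t(|I|)\asymp\log n/(c_1\lambda)$, attained at $|I|\asymp\lambda^2/\log n$; the AM--GM bound is already tight.

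Your Bernstein instinct does have a legitimate role, but only once $t>0$. Hoeffding with AM--GM yields $\exp(-16\lambda c_1t)$, which for $\tau<1/2$ is weaker than the $\exp(-8\lambda c_1t/\tau)$ displayed in the paper. So the $\tau$ prefactor in the $\tau\log n/\lambda$ term of $SD^{\tau}$ is not delivered by Hoeffding alone. To obtain it, dominate the count by $\mathrm{Bin}(|I|,\tau+c_1t)$ and apply Bernstein, with variance at most $(\tau+c_1t)|I|$, to the same linearly growing margin. This gives an exponent of order $\min\{\lambda c_1t/\tau,\ \lambda\}$.
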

\begin{remark}
    The main task in the proof is to derive high probability deterministic bound on the stochastic terms in \eqref{eqn:bias_sd} and \eqref{eqn:bias_sd_2}. A similar task was carried out in \cite{chatterjee2026minmaxtrendfilteringgeneralizations}, but they applied maximal inequality under subgaussianity, which is not applicable in our setup. We bound the stochastic term by first connecting the order statistics of the errors to appropriate indicator random variables, then applying Hoeffding inequality on these indicator random variables and doing a careful casework on the constants $C_{I,J}$.
\end{remark}
\begin{remark}
    Note that while we write down the above risk bound for $i=\lceil C_1 \log n\rceil,\ldots,  \lfloor n-C_1 \log n\rfloor$ only, similar risk bounds hold for other possible locations, with the difference being that the minimum (or maximum) is taken over a different collection of intervals. We state and prove a general version of Proposition~\ref{prop:ptwise_error_bound_interior} that includes all the above cases in Appendix~\ref{appnc}. However, for the analysis of the estimator at an interior design point $0<x_0<1$, Proposition~\ref{prop:ptwise_error_bound_interior} is sufficient, as it will be shown afterwards.
\end{remark}
    \noindent Now, fix any $0\leq x_0\leq 1$ and let $\hat{f}(x_0):=\hat{\theta}_{\lfloor nx_0\rfloor}$ be the corresponding quantile TVD estimate. In order to derive convergence rate of $\hat{f}(x_0)$, one needs to control the bias term inside the pointwise risk bounds in Proposition~\ref{prop:ptwise_error_bound_interior}. Thus, we need to make some structural assumption on the true signal $f^*$. In particular, we would assume $f^*$ to be locally H\"{o}lder continuous.
\begin{definition}[H\"{o}lder Continuous Functions]
    Given any subinterval $\mathbf{I} \subseteq[0,1]$, $\alpha \in[0,1]$,  we define the space of H\"{o}lder Continuous Functions, $C^{1, \alpha}(\mathbf{I})$, as the class of functions $f:[0,1] \rightarrow \mathbb{R}$ that are continuous on $\mathbf{I}$ and 
$$
|f|_{\mathbf{I} ; 1, \alpha} \stackrel{\text { def. }}{=} \sup _{x \neq y \in \mathbf{I}} \frac{\left|f(x)-f(y)\right|}{|x-y|^\alpha}<\infty .
$$
We call $|f|_{\mathbf{I} ; 1, \alpha}$ the H\"{o}lder coefficient (or norm) of $f$ on $\mathbf{I}$. If the above holds for some $\alpha>1$, then $f$ is constant on $I$. For notational continuity, we denote this case by $C^{1, \infty}(\mathbf{I})$ and set $|f|_{\mathbf{I} ; 1, \infty}=0$.
\end{definition}  
\begin{assumption}[Local H\"{o}lder continuity of signal]\label{ass:holder_cts}
There exists $r_0>0$, $\alpha_0>0$ such that
\begin{list}{}{%
    \settowidth{\labelwidth}{\textbf{Boundary Points:}}
    \setlength{\leftmargin}{\labelwidth}
    \addtolength{\leftmargin}{\labelsep}
}
    \item[\textit{Interior Point:}] if $0<x_0<1$, then $f^*$ is $\alpha_0$-H\"{o}lder continuous in the interval $[x_0\pm r_0]\subseteq [0,1]$ with H\"{o}lder norm $L_0$;
    \item[\textit{Boundary Points:}] if $x_0=0$ (or $=1$), then $f^*$ is $\alpha_0$-H\"{o}lder continuous in the interval $[0, r_0]\text{ (or }[r_0,1],$\\
    $\text{ respectively}) \subseteq [0,1]$ with H\"{o}lder norm $L_0$.
\end{list}
\end{assumption}
    We are now ready to state the main result of this section.

\begin{theorem}[Local Rate of Convergence]\label{thm:local_rate_holder}
 Fix any $0\leq x_0\leq 1$ and let $i_0:=\lfloor n x_0\rfloor$. Under the quantile sequence model with some $0<\tau <1$, suppose Assumption \ref{ass:growth_condn} and \ref{ass:holder_cts} hold.
Then, for any given $c>1$, there exist universal constants $C,C',C'', C'''>0$, such that for large enough $n\geq 1$,
\begin{itemize}
    \item [(A)] when $\alpha_0\leq 1$ and $ C \log n\leq \lambda <C'' n^{\frac{2\alpha_0}{2\alpha_0+1}}\,(\log n)^{\frac{1}{2\alpha_0+1}},$ the following holds with probability not less than $1-4n^{-c} $:
    \[|\hat{\theta}_{i_0}-\theta^*_{i_0}|\leq C'\left(\frac{\log n}{\lambda}+\frac{\lambda}{B_n}\right),\]
    where 
   \[B_n=
     \lfloor L_0^{-\frac{2}{2\alpha_0+1}}n^{\frac{2\alpha_0}{2\alpha_0+1}}(\log n)^{\frac{1}{2\alpha_0+1}}\rfloor.\]
   Moreover, the optimal choice of $\lambda$ is 
\[\lambda^*:=(\log n\;B_n)^{1/2},\]
and the corresponding local rate of convergence is \(O\left((2\alpha_0 L_0)^{\frac{1}{2\alpha_0+1}}n^{\frac{-\alpha_0}{2\alpha_0+1}}(\log n)^{\frac{\alpha_0}{2\alpha_0+1}}\right).\)
\item [(B)] when $\alpha_0>1$, i.e., the function $f^*$ is locally constant, and $C \log n\leq \lambda< C''' n,$ then
 \[|\hat{\theta}_{i_0}-\theta^*_{i_0}|\leq C'\left(\frac{\log n}{\lambda}+\frac{\lambda}{nr_0}\right).\]
 Moreover, the optimal choice of $\lambda$ is 
\[\lambda^*:=(nr_0\log n)^{1/2},\]
and the corresponding local rate of convergence is \(O\left((\log n/(nr_0))^{1/2}\right).\)
\end{itemize}
\end{theorem}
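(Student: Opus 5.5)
We prove Theorem~\ref{thm:local_rate_holder} by specializing the upper and lower bounds of Proposition~\ref{prop:ptwise_error_bound_interior}, for $i=i_0$, to a single symmetric outer interval. Take $0<x_0<1$ first. Then $i_0=\lfloor nx_0\rfloor$ lies in $[\lceil C_1\log n\rceil,\lfloor n-C_1\log n\rfloor]$ for large $n$. On the event of probability at least $1-4n^{-(c-1)}$ (we relabel $c$ to match the stated probability), we bound $\tilde U_{i_0}$ from above and $\tilde L_{i_0}$ from below. In each case we evaluate at one admissible $J$ rather than optimizing over all of them.

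The choice is $J=[i_0-m:i_0+m]$ with half-width $m$ to be chosen. Admissibility needs four things:
\begin{itemize}
\item $m\ge C_1\log n$;
\item $J\subseteq[2:n-1]$;
\item $|J|=2m+1>4\lambda/(c_1\delta)$;
\item $m/n\le r_0$, so that $J/n\subseteq[x_0-r_0,x_0+r_0]$ and Assumption~\ref{ass:holder_cts} applies.
\end{itemize}
Under these, both biases are controlled by H\"older continuity: $\max\{Bias_+,-Bias_-\}\le L_0(m/n)^{\alpha_0}$. Since $Dist(i_0,\partial J)=m+1$ and $|J|\asymp m$, we get $SD^{\tau},SD^{1-\tau}\lesssim \sqrt{\log n/m}+\log n/\lambda+\lambda/m$. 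This yields
\[
|\hat\theta_{i_0}-\theta^*_{i_0}|\lesssim L_0(m/n)^{\alpha_0}+\sqrt{\log n/m}+\frac{\log n}{\lambda}+\frac{\lambda}{m}.
\]

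For part (A), set $m\asymp B_n$. At this scale the bias $L_0(B_n/n)^{\alpha_0}$ balances $\sqrt{\log n/B_n}$, by the definition of $B_n$. Because $\lambda\ge C\log n$, we have $\sqrt{\log n/B_n}\le \sqrt{(\log n/\lambda)(\lambda/B_n)}\le \tfrac12(\log n/\lambda+\lambda/B_n)$ by AM--GM. Hence the first two terms are absorbed into $C'(\log n/\lambda+\lambda/B_n)$. The upper restriction $\lambda<C''B_n$ supplies $|J|>4\lambda/(c_1\delta)$, and $B_n=o(n)$ gives $m\le r_0n$ and $J\subseteq[2:n-1]$ for large $n$. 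The optimal tuning $\lambda^*=(B_n\log n)^{1/2}$ balances the two terms and gives the rate $\sqrt{\log n/B_n}$; substituting $B_n$ turns this into the stated power of $n$.

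For part (B), $f^*$ is constant on the neighborhood, so the bias vanishes and we may take $m\asymp nr_0$. Since $\lambda\ge C\log n$, the term $\sqrt{\log n/(nr_0)}$ is again absorbed, and the bound and optimal $\lambda$ follow in the same way. The condition $\lambda<C'''n$ guarantees $|J|>4\lambda/(c_1\delta)$.

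The boundary cases $x_0\in\{0,1\}$ need the general version in Appendix~\ref{appnc}, with one-sided intervals $J$ anchored at the far side, where $Dist$ is one-sided.

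The main obstacle is bookkeeping rather than any single hard estimate. We must check that the single interval satisfies every admissibility constraint at once, including $J\subseteq[2:n-1]$ and the boundary-case definition of $Dist$, and that the constants $C''$, $C'''$ make these constraints compatible. We must also track the H\"older constant inside $B_n$ so that the explicit factor $(2\alpha_0L_0)^{1/(2\alpha_0+1)}$ comes out correctly.
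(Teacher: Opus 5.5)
Your proposal is correct and follows essentially the same route as the paper. You specialize Proposition~\ref{prop:ptwise_error_bound_interior} at $i_0$ to one symmetric outer interval of length $\asymp B_n$ (respectively $\asymp nr_0$ in case (B)), and bound the bias by H\"older continuity so that it balances $\sqrt{\log n/B_n}$. You then absorb that term into $\log n/\lambda+\lambda/B_n$ by AM--GM, which is exactly the paper's observation that $\log n/\lambda+\lambda/B_n$ is at least its value at $\lambda^*=(B_n\log n)^{1/2}$.

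Two small points. The AM--GM bound holds for every $\lambda>0$, and it is also what absorbs $\sqrt{\log n/(nr_0)}$ in case (B), so your appeals to $\lambda\ge C\log n$ at those steps are unnecessary. You should also check explicitly that $\lambda^*$ lies in the admissible range $[C\log n,\,C''B_n)$ for large $n$.
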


\begin{remark}
Theorem~\ref{thm:local_rate_holder} establishes consistency of quantile TVD for locally H\"{o}lder continuous signals for any positive smoothness level $\alpha_0$ and it explicitly shows how the rate of convergence depends on the local smoothness level $\alpha_0$. \cite{zhang2023element} is the only existing work we are aware of regarding pointwise error bound of quantile TVD, whose results are valid only for piecewise constant signals (Case $(B)$ in Theorem~\ref{thm:local_rate_holder}). Thus, our result appears to be the first of its kind that that hold for general $\alpha_0$. It should be noted that in the case of piecewise constant signals, we recover the local rate of convergence and the corresponding optimal choice of $\lambda$ that were derived in \cite{zhang2023element} at any interior point $x_0\in (0,1)$. Additionally, our results show consistency of the estimator at boundary points $x_0=0,1$, which was not possible in the analysis of \cite{zhang2023element}. 
\end{remark}

\begin{proof}[of Theorem~\ref{thm:local_rate_holder}:]
    We only prove the required risk bound for the case when $0<x_0<1$ as the other cases can be proved similarly using Proposition~\ref{prop:ptwise_error_bound}.  Let $i_0=\lfloor n r_0 \rfloor$ and set $C''=\frac{c_1\delta }{4}L_0^{-\frac{2}{2\alpha_0+1}}$. Because of the assumption $r_0>0$, for large enough $n$, it holds that $i_0\in [\lceil C_1\log n\rceil:\, \lfloor n-C_1\log n\rfloor]$ and also there exists at least one $J$ containing $i_0$ such that $Dist(i,\partial J)\geq C_1 \log n$, and $J$ is contained inside $\left[\lceil n(x_0-r_0)\rceil:\,\lfloor n(x_0+r_0)\rfloor\right].$ 
    Hereon, we divide the proof into two cases.\\
    \\
    \noindent \textit{Case (A):} Because of H\"{o}lder continuity, for such a $J$,
    \[Bias_+(i,J,\theta^*)\leq L_0 \frac{|J|^{\alpha_0}}{n^{\alpha_0}}.\]
    Thus, the term inside the minimum in Proposition~\ref{prop:ptwise_error_bound_interior} can be bounded by
    \[L_0 \frac{|J|^{\alpha_0}}{n^{\alpha_0}}+\left(\frac{\log n}{Dist(i,\partial J)}\right)^{1/2}+\frac{\log n}{\lambda}+\frac{\lambda}{|J|},\]
    upto a constant factor. Now, the sum of the first two terms is minimized when we choose $J$ to be an interval symmetric around $i_0$ such that 
    \[|J|=B_n=\min \left\{
     \lfloor L_0^{-\frac{2}{2\alpha_0+1}}n^{\frac{2\alpha_0}{2\alpha_0+1}}(\log n)^{\frac{1}{2\alpha_0+1}}\rfloor , 2nr_0\right\}.\]
     Note that for large enough $n$, the second quantity inside the above minimum dominates and hence
     \[|J|=B_n=\lfloor L_0^{-\frac{2}{2\alpha_0+1}}n^{\frac{2\alpha_0}{2\alpha_0+1}}(\log n)^{\frac{1}{2\alpha_0+1}}\rfloor.\]
    At this point, we would like to point out that such choice of $J$ satisfies the constraint $|J|>4\lambda/(c_1 \delta)$ as 
    \[\frac{4\lambda}{c_1 \delta}<\frac{4C''}{c_1\delta} n^{\frac{2\alpha_0}{2\alpha_0+1}}(\log n)^{\frac{1}{2\alpha_0+1}}=L_0^{-\frac{2}{2\alpha_0+1}}n^{\frac{2\alpha_0}{2\alpha_0+1}}(\log n)^{\frac{1}{2\alpha_0+1}}.\]
    For this choice of $J$, the sum of the first two terms in the pointwise estimation error bound can be bounded by
    \[L_0^{\frac{1}{2\alpha_0+1}} n^{-\frac{\alpha_0}{2\alpha_0+1}} (\log n)^{\frac{\alpha_0}{2\alpha_0+1}}.\]
    Let us define
    \[g(\lambda):=\frac{\log n}{\lambda}+\frac{\lambda}{B_n}.\]
    $g$ is minimized at $\lambda^*=(\log n B_n)^{1/2}$ (for large enough $n$, $C \log n\leq \lambda^*\leq C'' n^{\frac{2\alpha_0}{2\alpha_0+1}}(\log n)^{\frac{1}{2\alpha_0+1}}$) and $g(\lambda^*)=(2\log n/B_n)^{1/2}$ is at least of the order of \(L_0^{\frac{1}{2\alpha_0+1}} n^{-\frac{\alpha_0}{2\alpha_0+1}} (\log n)^{\frac{\alpha_0}{2\alpha_0+1}}.\)\\
    \\
   \noindent \textit{Case (B):} Because $f^*$ is constant in such a $J$, $Bias_+(i,J,\theta^*)=0$.  Thus, the quantity inside the minimum in Proposition~\ref{prop:ptwise_error_bound} can be bounded by
\begin{equation}\label{eqn:constant_case_upper_bound}
    \left(\frac{\log n}{Dist(i,\partial J)}\right)^{1/2}+\frac{\log n}{\lambda}+\frac{\lambda}{|J|},
\end{equation}
    upto a constant factor.
    For any fixed $\lambda$, the above quantity is minimized when $J$ is taken to be the interval $[\lceil n(x_0-r_0)\rceil:\,\lfloor n(x_0+r_0)\rfloor]$ and for this choice of $J$,~\eqref{eqn:constant_case_upper_bound} can be bounded by
    \[\left(\frac{\log n}{nr_0}\right)^{1/2}+\frac{\log n}{\lambda}+\frac{\lambda}{nr_0},\]
    upto a constant factor. Since $\lambda \geq C \log n$, the first term gets dominated by the third for large $n$. It is not difficult to see that this quantity is minimized by $\lambda^*=(nr_0 \log n)^{1/2}$ and the upper bound corresponding to $\lambda^*$ is \(O\left((\log n /(nr_0))^{1/2}\right).\)\\
    \\
    \noindent  Thus, the upper bound is established in both cases. The lower bound can be established similarly by deriving upper bound on $-\tilde{L}_i$, which is defined in Proposition~\ref{prop:ptwise_error_bound_interior}.
\end{proof}

\section{Summary}\label{sec:conclusion}

In this paper, we established an exact pointwise characterization of univariate quantile total variation denoising via minmax/maxmin representations of its fitted values. Despite the inherent non-uniqueness of the estimator, we showed that the set of admissible fitted values at each location forms a compact interval, whose endpoints admit explicit variational formulae in terms of local order statistics over nested intervals.

These representations yield several new insights. Structurally, they imply that quantile TVD is intrinsically non-crossing across quantile levels when a common tuning parameter is used, a property we further relate to the submodularity of the TV penalty and extend to a broader class of penalized quantile regression estimators. Statistically, the pointwise formulas lead to a local bias--variance decomposition, enabling the derivation of finite-sample pointwise risk bounds and near-optimal local rates under H\"older smoothness and heavy-tailed noise.

Overall, our results provide a unified and transparent framework for understanding the behavior of quantile TVD and highlight the broader utility of exact pointwise representations in the analysis of globally regularized estimators.

\bigskip
    
	\setlength{\bibsep}{0pt plus 0.25ex}
	
	\bibliographystyle{chicago}

    \bibliography{bibilography.bib}

    \begin{appendix}
    \refstepcounter{section}
\section*{Appendix \thesection: Lemmas required to prove Theorem~\ref{thm:minmax_bound}}\label{appna}
\begin{proof}[Proof of Lemma~\ref{lem:orderstat}]
(a) If $m<0$ or $m\geq |I|$, the bound holds trivially. So, let's assume $m \in \{0,1,\cdots,|I|-1\}.$
If $t>y_{I,(m+1)}$ then at least $m+1$ points in $I$ are $<t$, contradiction.
(b) If $m\leq 0$ or $m\geq |I|+1$, the bound holds trivially. So, let's assume $m \in \{1,\cdots,|I|\}.$
If $t<y_{I,(m)}$ then at most $m-1$ points are $\le t$, contradiction.
\end{proof}

\begin{proof}[Proof of Lemma~\ref{lemma:z_upper_bd}]
   We verify \eqref{eq:z-upper-key-thm1} case by case, exactly following Definition~\ref{defn:C_{IJ}}.
Recall that $|z_k|\le\lambda$ for all $k=1,\dots,n-1$, and $z_0=z_n=0$. Moreover, when
$\hat\theta_{k+1}\neq\hat\theta_k$, we have $z_k=\pm\lambda$ with sign determined by
$\operatorname{sign}(\hat\theta_k-\hat\theta_{k+1})$.

\medskip
\noindent\underline{\textbf{Case 1: $1<a\le b<n$ (interior $J$).}}
\[
C_{I,J}=
\begin{cases}
1, & I\subset J,\\
-1, & I=J,\\
0, & \text{otherwise}.
\end{cases}
\]
\begin{itemize}
\item If $I\subset J$, maximality of $I$ forces $z_{c-1}=-\lambda$ and $z_d=\lambda$,
so $z_{c-1}-z_d=-2\lambda$.

\item If $I=[c:b]$ with $c > a$ then maximality of $I$ forces $z_{c-1}=-\lambda$, and thus
$z_{c-1}-z_d=-\lambda-z_d\le 0$ since $|z_d|\le \lambda.$

\item If $I=[a:d]$ with $d < b$, then maximality of $I$ forces $z_d=\lambda$, and thus
$z_{c-1}-z_d=z_{c-1}-\lambda\le 0$ since $|z_{c-1}|\le \lambda.$

\item If $I=J$, $z_{c-1}-z_d = z_{a-1}-z_b \leq 2\lambda$ since $\max\{|z_{a - 1}|,|z_{b}|\} \leq \lambda.$
\end{itemize}

\medskip
\noindent\underline{\textbf{Case 2: $J=[1:b]$, $b<n$.}}
\[
C_{I,J}=
\begin{cases}
1, & I\subset J,\\
-\tfrac12, & I=J,\\
\tfrac12, & I=[1:d],\ d<b,\\
0, & I=[c:b],\ c>1.
\end{cases}
\]
Using $z_0=0$:
\begin{itemize}
\item $I\subset J$: same as Case 1, $z_{c-1}-z_d=-2\lambda$.
\item $I=[1:d]$, $d<b$: maximality of $I$ forces $z_d=\lambda$, hence $z_{c-1}-z_d=-z_d=-\lambda$.
\item $I=[c:b]$, $c>1$: maximality of $I$ forces $z_{c-1}=-\lambda$, hence $z_{c-1}-z_b=-\lambda-z_b \le 0$ since $|z_b|\le \lambda$.
\item $I=J$: $z_{c-1}-z_d=-z_b\le\lambda$.
\end{itemize}

\medskip
\noindent\underline{\textbf{Case 3: $J=[a:n]$, $a>1$.}}
\[
C_{I,J}=
\begin{cases}
1, & I\subset J,\\
-\tfrac12, & I=J,\\
\tfrac12, & I=[c:n],\ c>a,\\
0, & I=[a:d],\ d<n.
\end{cases}
\]
Using $z_n=0$:
\begin{itemize}
\item $I\subset J$: same as Case 1, $z_{c-1}-z_d=-2\lambda$.
\item $I=[c:n]$ with $c>a$: maximality of $I$ forces $z_{c-1}=-\lambda$, hence $z_{c-1}-z_d=z_{c-1}-z_n=z_{c-1}=-\lambda$.
\item $I=[a:d]$ with $d<n$: maximality of $I$ forces $z_d=\lambda$, hence $z_{c-1}-z_d=z_{a-1}-\lambda\le 0$ since $|z_{a-1}|\le\lambda$.
\item $I=J$: $z_{c-1}-z_d=z_{a-1}-z_n=z_{a-1}\le \lambda$.
\end{itemize}

\medskip
\noindent\underline{\textbf{Case 4: $J=[1:n]$.}}
\[
C_{I,J}=
\begin{cases}
1, & I\subset J,\\
0, & I=J,\\
\tfrac12, & \text{otherwise}.
\end{cases}
\]
Using $z_0=z_n=0$:
\begin{itemize}
\item $I\subset J$: same as Case 1, $z_{c-1}-z_d=-2\lambda$.
\item If $I=[c:d]$ with $c > 1, d = n$ then maximality of $I$ forces $z_{c-1}=-\lambda$, thus giving $z_{c-1}-z_d = z_{c-1} = -\lambda.$
\item If $I=[c:d]$ with $c = 1,d < n$, then maximality of $I$ forces $z_d=\lambda$, thus giving $z_{c-1}- z_d = -z_d = -\lambda.$
\item If $I=J$, then $z_{c-1}-z_d  = z_0 - z_n = 0.$
\end{itemize}

This exhausts all the possible cases and thus proves \eqref{eq:z-upper-key-thm1}. 
\end{proof}

\begin{proof} [Proof of Lemma~\ref{lemma:z_lower_bd}]
    We verify \eqref{eq:z-lower-key-thm2} case by case, exactly following Definition~\ref{defn:C_{IJ}}.
In addition to $|z_k|\le\lambda$, we use the maximality of $\hat{J}$ which implies:
if $a>1$ then $\hat\theta^{\max}_{a-1}>\hat\theta^{\max}_a$, hence $z_{a-1}=+\lambda$;
if $b<n$ then $\hat\theta^{\max}_{b}<\hat\theta^{\max}_{b+1}$, hence $z_b=-\lambda$.

\medskip
\noindent\underline{\textbf{Case 1: $1<a\le b<n$ (interior $\hat{J}$).}}
\[
C_{I,J}=
\begin{cases}
1, & I\subset \hat{J},\\
-1, & I=\hat{J},\\
0, & \text{otherwise}.
\end{cases}
\]
\begin{itemize}
\item If $I\subset \hat{J}$, then $z_{c-1}-z_d \geq -2\lambda$ since $\max\{|z_{c - 1}|,|z_{d}|\} \leq \lambda.$

\item If $I=[c:d]$ with $a = c, d < b$, then $z_{c - 1} = z_{a-1}=\lambda$, giving $z_{c-1}- z_d = \lambda - z_d \ge 0$ since $|z_{d}| \leq \lambda.$

\item If $I=[c:d]$ with $c > a, d = b$ then $z_{d} = z_b=-\lambda$, giving $z_{c-1}-z_d = z_{c-1}+\lambda \ge 0$ since $|z_{c - 1}| \leq \lambda.$

\item If $I=\hat{J}$, then $z_{c-1}-z_d = z_{a-1}-z_b = \lambda-(-\lambda)=2\lambda.$
\end{itemize}

\medskip
\noindent\underline{\textbf{Case 2: $\hat{J}=[1:b]$, $b<n$.}}
\[
C_{I,\hat{J}}=
\begin{cases}
1, & I\subset \hat{J},\\
-\tfrac12, & I=\hat{J},\\
\tfrac12, & I=[1:d],\ d<b,\\
0, & I=[c:b],\ c>1.
\end{cases}
\]
Using $z_0=0$ and $z_b=-\lambda$:
\begin{itemize}
\item $I\subset \hat{J}$: same as Case 1, $z_{c-1}-z_d \geq -2\lambda$.
\item $I=[c:d]$, $c = 1, d<b$: $z_{c-1}-z_d=-z_d \geq -\lambda$.
\item $I=[c:d]$, $c>1, d = b$: $z_{c-1}-z_d = z_{c-1}+\lambda \ge 0$.
\item $I=\hat{J}$: $z_{c-1}-z_d=-z_b = \lambda$.
\end{itemize}

\medskip
\noindent\underline{\textbf{Case 3: $\hat{J}=[a:n]$, $a>1$.}}
\[
C_{I,\hat{J}}=
\begin{cases}
1, & I\subset \hat{J},\\
-\tfrac12, & I=\hat{J},\\
\tfrac12, & I=[c:n],\ c>a,\\
0, & I=[a:d],\ d<n.
\end{cases}
\]
Using $z_n=0$ and $z_{a-1}=+\lambda$:
\begin{itemize}
\item $I\subset \hat{J}$: same as Case 1, $z_{c-1}-z_d \geq -2\lambda$.
\item $I=[c:d]$ with $c>a, d=n$: $z_{c-1}-z_d=z_{c-1}\ge -\lambda$.
\item $I=[c:d]$ with $c=a, d<n$: $z_{c-1}-z_d=z_{a-1}-z_d\ge 0$.
\item $I=\hat{J}$: $z_{c-1}-z_d=z_{a-1}-z_n=z_{a-1}=\lambda$.
\end{itemize}

\medskip
\noindent\underline{\textbf{Case 4: $\hat{J}=[1:n]$.}}
\[
C_{I,\hat{J}}=
\begin{cases}
1, & I\subset \hat{J},\\
0, & I=\hat{J},\\
\tfrac12, & \text{otherwise}.
\end{cases}
\]
Using $z_0=z_n=0$:
\begin{itemize}
\item $I\subset \hat{J}$: same as Case 1, $z_{c-1}-z_d \geq -2\lambda$.
\item If $I=[c:d]$ with $c > 1, d = n$ then $z_d=0$ so $z_{c-1}-z_d = z_{c-1} \geq -\lambda.$
\item If $I=[c:d]$ with $c = 1,d < n$, then $z_{c-1}=0$ so $z_{c-1}- z_d = -z_d \geq -\lambda.$
\item If $I=\hat{J}$, then $z_{c-1}-z_d  = z_0 - z_n = 0.$
\end{itemize}

This exhausts all cases and proves \eqref{eq:z-lower-key-thm2}.
\end{proof}
\begin{lemma}\label{lem:minmax_finite}
    Fix any location $i\in [n]$. Let 
    \[L_i=\max_{J\in \I: i \in J}\min_{I\in \I: I\subseteq J,\, i \in I}\, y_{I,(\lceil l_{I,J}\rceil)},\quad U_i=\min_{J\in \I: i \in J}\max_{I\in \I: I\subseteq J,\, i \in I}\, y_{I,(\lfloor u_{I,J}\rfloor+1)}.\]
    Then, the following hold:
    \begin{enumerate}
        \item If $\tau \in (0,1)$, then
        \begin{equation*}\label{eqn:minmax_finite}
-\infty<L_i\leq U_i  <\infty.
 \end{equation*}
 \item If $\tau=0$ (or $1$) and $\lambda>0$, then
  \begin{align*}\label{eqn:minmax_infinite}
 L_i=-\infty\text{ ( or }\max_{i \in [n]}y_i,\text{ respectively});\quad U_i =\min_{i \in [n]}y_i\text{ (or }\infty,\text{ respectively)}.
 \end{align*}
    \end{enumerate}
    
    \end{lemma}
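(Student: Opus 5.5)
The plan is to get the ordering $L_i\le U_i$ for free from what is already established, and then check finiteness by evaluating the inner optimisation at one well-chosen outer interval. For $\tau\in(0,1)$, Step~\ref{itm:step_1} shows that $\cS$ is nonempty. Step~\ref{itm:step_2} does not use finiteness of the envelopes, so it applies and gives $L_i\le\hat\theta_i\le U_i$ for any $\hat\theta\in\cS$. Hence $L_i\le U_i$, and in particular $U_i>-\infty$ and $L_i<\infty$. It remains to show $U_i<\infty$ and $L_i>-\infty$. For both I take the outer interval $J=[1:n]$, which contains every $i$.

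\emph{Upper bound on $U_i$.} Since $U_i$ is a minimum over $J$, it is at most the value at $J=[1:n]$. By case 4 of Definition~\ref{defn:C_{IJ}}, $C_{I,[1:n]}\in\{1,0,\tfrac12\}$, so $C_{I,[1:n]}\ge 0$. Therefore $u_{I,[1:n]}=\tau|I|-2\lambda C_{I,[1:n]}\le\tau|I|<|I|$. This gives $\lfloor u_{I,[1:n]}\rfloor+1\le |I|$, so each $y_{I,(\lfloor u_{I,[1:n]}\rfloor+1)}$ is either a genuine data point or $-\infty$, and in any case it is $<\infty$. Since the maximum over the finitely many $I$ is finite, $U_i<\infty$.

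\emph{Lower bound on $L_i$.} Symmetrically, $L_i$ is at least the inner minimum at $J=[1:n]$. There $l_{I,[1:n]}=\tau|I|+2\lambda C_{I,[1:n]}\ge\tau|I|>0$, so $\lceil l_{I,[1:n]}\rceil\ge 1$. Each $y_{I,(\lceil l\rceil)}$ is therefore either a data point or $+\infty$, never $-\infty$. The minimum over $I$ is then $>-\infty$, which proves part 1.

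\emph{The case $\tau=0$, $\lambda>0$.} Here I compute both envelopes directly.

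For $U_i$, first take any $J\ni i$ and the inner choice $I=J$. In every case of Definition~\ref{defn:C_{IJ}} we have $C_{J,J}\in\{-1,-\tfrac12,0\}$. So $u_{J,J}\ge 0$ and the index $\lfloor u_{J,J}\rfloor+1\ge1$. The order statistic is therefore either $\ge\min_k y_k$ or equal to $+\infty$. Hence every inner maximum is $\ge\min_k y_k$, and so $U_i\ge\min_k y_k$.

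Next take $J=[1:n]$. For $I=J$ we get $u=0$, so the term is $y_{[1:n],(1)}=\min_k y_k$. For $I\ne J$ we have $C\in\{\tfrac12,1\}$, so $u<0$ and the term is $-\infty$. This inner maximum equals $\min_k y_k$, so $U_i=\min_k y_k$.

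For $L_i$, take any $J\ni i$ and again the inner choice $I=J$. Then $l_{J,J}=2\lambda C_{J,J}\le 0$, so $\lceil l_{J,J}\rceil\le 0$ and the term is $-\infty$. Every inner minimum is therefore $-\infty$, giving $L_i=-\infty$.

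The case $\tau=1$ follows from the negation symmetry used in Step~\ref{itm:step_2}. The map $y\mapsto -y$ sends $\tau$ to $1-\tau$, and the identity \eqref{eqn:order_stat_neg_vector} exchanges the two envelope formulas up to sign. Applying the $\tau=0$ result to $-y$ then gives $L_i=\max_k y_k$ and $U_i=\infty$.

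The only delicate point is the rounding in this last transfer. One has to check that $|I|-\lfloor\,\cdot\,\rfloor$ agrees with $\lceil\,\cdot\,\rceil$ in the extreme cases, or else verify the $\tau=1$ formulas directly by the same single-$J$ computation, which is equally short. The rest is bookkeeping with the sign of $C_{I,J}$.
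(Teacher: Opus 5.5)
Your proof is correct. The finiteness half of part 1 and the $\tau=0$ computation match the paper's. For finiteness you evaluate at $J=[1:n]$, where $C_{I,[1:n]}\ge 0$ forces $\lfloor u_{I,[1:n]}\rfloor+1\le |I|$ and $\lceil l_{I,[1:n]}\rceil\ge 1$.

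Where you genuinely differ is the ordering $L_i\le U_i$.
\begin{itemize}
\item The paper proves it from the formulas alone. For any $J_1,J_2\ni i$ it takes the common inner interval $I=J_1\cap J_2$ and uses the signs of $C_{I,J_1}$ and $C_{I,J_2}$ to get $\lceil l_{I,J_1}\rceil\le\lfloor u_{I,J_2}\rfloor+1$.
\item You instead sandwich an actual minimizer between the two envelopes using Steps 1--2.
\end{itemize}

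Your route is legitimate. Step 2 only uses the KKT interval identity and Lemma~\ref{lem:orderstat} with the $\pm\infty$ conventions, and never uses finiteness of $L_i$ or $U_i$. So there is no circularity, even though the paper cites this lemma at the start of Section~\ref{sec:minmax_1}. Your route also avoids all case analysis.

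The paper's route buys independence from the optimization problem. It is a statement purely about the formulas, and it works verbatim for every $\tau\in[0,1]$ and $\lambda\ge 0$. Its cost is bookkeeping. As written, it only treats nested $J_1,J_2$. The non-nested case needs one more observation: $I$ then shares exactly one endpoint with each $J_k$ and contains neither $1$ nor $n$, so $C_{I,J_1}=C_{I,J_2}=0$.

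In part 2, your uniform choice $I=J$, using $C_{J,J}\in\{-1,-\tfrac12,0\}$, is cleaner than the paper's wording. The paper asserts that every $J$ admits an $I$ with $C_{I,J}=0$, which fails for a singleton $J=\{i\}$ when $n\ge 2$.

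Finally, the rounding in your $\tau=1$ transfer is not delicate. Since $|I|$ is an integer, $|I|-\lceil a\rceil=\lfloor |I|-a\rfloor$ (equivalently $|I|-\lfloor a\rfloor=\lceil |I|-a\rceil$) holds exactly. Hence $L^{1-\tau}_i(-y)=-U^{\tau}_i(y)$ and $U^{1-\tau}_i(-y)=-L^{\tau}_i(y)$ hold with equality, and the $\tau=0$ result transfers directly.
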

\begin{proof}  
  $(1).$ We first prove $L_i\leq U_i$ from first principle. It suffices to show that for any $J_1,J_2 \in \I$ containing $i$, it holds that
  \begin{equation}\label{eqn:min_leq_max}
    \min_{I\in \I: I\subseteq J_1,\, i \in I}\, y_{I,(\lceil l_{I,J_1}\rceil)}\leq \max_{I\in \I: I\subseteq J_2,\, i \in I}\, y_{I,(\lfloor u_{I,J_2}\rfloor+1)}.  
  \end{equation}
  Note that~\eqref{eqn:min_leq_max} compares two lists of numbers that are not necessarily identical. In order to establish~\eqref{eqn:min_leq_max}, it thus suffices to show that there is a common element in both the lists, i.e., there exists $I\subseteq J_1,J_2$ that contains $i$ and
  \[y_{I,(\lceil l_{I,J_1}\rceil)}\leq y_{I,(\lfloor u_{I,J_2}\rfloor+1)}.\]
  To this end, consider $I=J_1\cap J_2.$ Since both $J_1,J_2$ contain $i$, so does $I$. Now, if $J_1\subset J_2$ (in a strict sense), then by Definition~\ref{defn:C_{IJ}}, $C_{I,J_1}\leq 0$ and $C_{I,J_2}\geq 0$. Thus,
  \[l_{I,J_1}=\tau|I|+2\lambda \,C_{I,J_1}\leq \tau |I|,\quad u_{I,J_2}=\tau|I|-2\lambda \,C_{I,J_2}\geq \tau|I|\Rightarrow \lceil l_{I,J_1} \rceil \leq \lfloor u_{I,J_2}\rfloor+1,\]
  which in turn implies 
  \[y_{I,(\lceil l_{I,J_1}\rceil)}\leq y_{I,(\lfloor u_{I,J_2}\rfloor+1)}.\]
  The case of $J_2\subset J_1$ is exactly similar by symmetry. Now, when $J_1=J_2$, $C_{I,J_1}=C_{I,J_2}\leq 0.$ Thus, by our previous analysis, we again get
  \[y_{I,(\lceil l_{I,J_1}\rceil)}\leq y_{I,(\lfloor u_{I,J_2}\rfloor+1)},\]
  and thus, we have proved $L_i\leq U_i$.\\
  \\
  \noindent 
  We next show that $U_i<\infty$.  It suffices to show that there exists at least one $J\subseteq [n]$ containing $i$ such that  \[\max_{I\in \I: I\subseteq J,\, i \in I}\, y_{I,(\lfloor u_{I,J}\rfloor+1)}<\infty.\]
  Now, note that for any $I\subseteq [1:n]$, $u_{I,[1:n]}<|I|$ because of the definition of $C_{I,[1:n]}$ and $\tau<1$. Therefore, 
 \[\max_{I\in \I: I\subseteq [1:n],\, i \in I}\, y_{I,(\lfloor u_{I,[1:n]}\rfloor+1)}<\infty.\]
 Thus, we have proved $U_i<\infty.$
\\
\noindent We next prove that $L_i>-\infty$. It suffices to show that there exists at least one $J\subseteq [n]$ containing $i$, such that
 \[\min_{I\in \I: I\subseteq J,\, i \in I}\, y_{I,(\lceil l_{I,J}\rceil)}>-\infty.\] 
 Again, note that for any $I\subseteq [1:n]$, $l_{I,[1:n]}>0$ by the definition of $C_{I,[1:n]}$ and the fact $\tau>0$. Therefore, $\lceil l_{I,[1:n]}\rceil \geq 1$ and hence,
 \[ y_{I,(\lceil l_{I,[1:n]}\rceil)}>-\infty.\]
 Thus, we have proved $L_i>-\infty.$\\
\\
\noindent $(2).$ Let's compute the envelope bounds in the case of $\tau=0$. Note that in this case, by Definition~\ref{defn:C_{IJ}}, for any $J \in \cI$ containing $i$, there always exists a $I\subseteq J$ containing $i$ such that $C_{I,J}=0$. Therefore, for that particular $I$, $l_{I,J}=0$ and hence, by Definition~\ref{defn:Y_I,k}, we have \(\min_{I\in \I: I\subseteq J,\, i \in I}\,y_{I,(\lceil l^{\tau}_{I,J}\rceil)}=-\infty.\) Since this holds for any such choice of $J$, we conclude that
\[\max_{J\in \I: \, i \in I}\,\min_{I\in \I: I\subseteq J,\, i \in I}\,y_{I,(\lceil l^{\tau}_{I,J}\rceil)}=-\infty.\]
Next, note that when $J=[1:n]$, it follows from Definition~\ref{defn:C_{IJ}} that $u_{I,J}<0$ if $I\neq J$ and $u_{I,J}=0$ when $I=J$. Thus, $\lfloor u_{I,J}\rfloor+1\leq 0$ and $=1$ when $I\neq J$ and $I=J$, respectively. Thus, by Definition~\ref{defn:Y_I,k}, we have
\[\max_{I\in \I: I\subseteq J,\, i \in I}\,y_{I,(\lceil l^{\tau}_{I,J}\rceil)}=y_{[1:n],(1)}=\min_{i \in [n]}y_i.\]
Therefore, we conclude that $\min_{J\in \I: i \in J}\max_{I\in \I: I\subseteq J,\, i \in I}\, y_{I,(\lfloor u_{I,J}\rfloor+1)}\leq \min_{i \in [n]}y_i$. Moreover, by Definition~\ref{defn:C_{IJ}}, for any $J=[1:n]$, there exists at least one $I\subseteq J$ containing $i$ such that $C_{I,J}<0$ and hence, $u^{\tau}_{I,J}>0$ for $\tau=0$ and $\lambda>0$.  Thus, for such a $J$,
\[\max_{I\in \I: I\subseteq J,\, i \in I}\, y_{I,(\lfloor u_{I,J}\rfloor+1)}\geq y_{J,(1)}\geq \min_{i\in [n]}y_i.\]
Hence, we conclude that $\min_{J\in \I: i \in J}\max_{I\in \I: I\subseteq J,\, i \in I}\, y_{I,(\lfloor u_{I,J}\rfloor+1)}=\min_{i \in [n]}y_i.$ Thus, the statement of the theorem is proved for $\tau=0$. The case of $\tau=1$ can be handled in a similar fashion.\\
\end{proof}

\begin{proof}[Proof of Lemma~\ref{lemma:TV_is_affine}]

In order to prove~\eqref{eqn:penalty_term_is_affine}, we consider the four possible cases:
\begin{itemize}
    \item \underline{$I\subseteq [2:n-1]$:} In this case, 
    \begin{align*}
        &TV(\theta(t))-TV(\theta(0))\\
        &=|\theta_c(t)-\theta_{c-1}(t)|+|\theta_{d+1}(t)-\theta_d(t)|-|\hat{\theta}^{\max}_c-\hat{\theta}^{\max}_{c-1}|-|\hat{\theta}^{\max}_{d+1}-\hat{\theta}^{\max}_d|\\
        &=\left(|\hat\theta^{\max}_c+t-\hat\theta^{\max}_{c-1}|-|\hat{\theta}^{\max}_c-\hat{\theta}^{\max}_{c-1}|\right)+\left(|\hat\theta^{\max}_{d+1}-\hat\theta^{\max}_d-t|-|\hat{\theta}^{\max}_{d+1}-\hat{\theta}^{\max}_d|\right).\\
    \end{align*}
    Now, for small enough $t>0$, it holds that
    \begin{align*}
        |\hat\theta^{\max}_c-\hat\theta^{\max}_{c-1}+t|-|\hat{\theta}^{\max}_c-\hat{\theta}^{\max}_{c-1}|&=\begin{cases}
            \hat\theta^{\max}_c-\hat\theta^{\max}_{c-1}+t-\hat{\theta}^{\max}_c+\hat{\theta}^{\max}_{c-1} & \text{ if }  \hat\theta^{\max}_c\geq \hat\theta^{\max}_{c-1};\\
            \hat\theta^{\max}_{c-1}-\hat\theta^{\max}_c-t+\hat{\theta}^{\max}_c-\hat{\theta}^{\max}_{c-1} & \text{ if }  \hat\theta^{\max}_c< \hat\theta^{\max}_{c-1};\\
        \end{cases}\\
        &=\begin{cases}
            t & \text{ if }  \hat\theta^{\max}_c\geq \hat\theta^{\max}_{c-1};\\
            -t & \text{ if }  \hat\theta^{\max}_c< \hat\theta^{\max}_{c-1};\\
        \end{cases}\\\
        &=\sigma_{c-1}\,t,
    \end{align*}
    where in the first equality, we are using the fact that if $\hat\theta^{\max}_c-\hat\theta^{\max}_{c-1}\geq 0$ (or $<0$, respectively), then so is $\hat\theta^{\max}_c-\hat\theta^{\max}_{c-1}+t$ for small enough $t>0$.
    In a similar fashion, it is easy to conclude that 
    \[|\hat\theta^{\max}_{d+1}-\hat\theta^{\max}_d-t|-|\hat{\theta}^{\max}_{d+1}-\hat{\theta}^{\max}_d|=\sigma_d\,t.\]
   \item \underline{$I=[1:d]$, where $d<n$:} In this case, 
   \begin{align*}
        &TV(\theta(t))-TV(\theta(0))\\
        &=|\theta_{d+1}(t)-\theta_d(t)|-|\hat{\theta}^{\max}_{d+1}-\hat{\theta}^{\max}_d|\\
        &=|\hat\theta^{\max}_{d+1}-\hat\theta^{\max}_d-t|-|\hat{\theta}^{\max}_{d+1}-\hat{\theta}^{\max}_d|\\
        &=\sigma_d\,t,
    \end{align*}
    where the last equality follows holds for small enough $t>0$ from the same reasoning we used in the previous case.
    \item \underline{$I=[c:n]$, where $1<c$:} Similar to the previous case, one can show again that
    \[TV(\theta(t))-TV(\theta(0))=\sigma_{c-1}\,t,\]
    for small enough $t$.
    \item \underline{$I=[1:n]$:} In this case, \(TV(\theta(t))=TV(\theta(0))\) for any $t\geq 0$.
\end{itemize}
Thus, \eqref{eqn:penalty_term_is_affine} is proved.
\end{proof}
\begin{proof}[Proof of Lemma~\ref{lemma:tv-bound-full-lemma}]
We will use:
(i) whenever defined, $\sigma\in\{-1,+1\}$;
(ii) maximality of $\hat{J}=[a:b]$ implies
\[
z_{a-1}=\lambda\one \{a>1\},
\qquad
z_b=-\lambda\one \{b<n\},
\]
equivalently $\sigma_{a-1}=-1$ and $\sigma_b=+1$ whenever they are defined. 

\medskip
\noindent\underline{\textbf{Case 1: $1<a\le b<n$.}}
In this case,
\[\sigma_{c-1}\one\{c\neq 1\}-\sigma_d\one\{d\neq n\}=\sigma_{c-1}-\sigma_d,\]
and
\[
C_{I,\hat J}=
\begin{cases}
1,& I\subset \hat J,\\
-1,& I=\hat J,\\
0,& \text{otherwise}.
\end{cases}
\]

\begin{itemize}
\item If $I\subset \hat J$, then $\sigma_{c-1}-\sigma_d\le 2$, hence
$\lambda(\sigma_{c-1}-\sigma_d)\le 2\lambda=2\lambda C_{I,\hat J}$.

\item If $I=\hat J$, then $\sigma_{c-1}=\sigma_{a-1}=-1$ and $\sigma_d=\sigma_b=+1$,
so $\sigma_{c-1}-\sigma_d=-2=2C_{I,\hat J}$.

\item If $I=[c:b]$ with $c>a$, then $\sigma_d=+1$ and $\sigma_{c-1}\le 1$,
so $\sigma_{c-1}-\sigma_d\le 0=2C_{I,\hat J}$.

\item If $I=[a:d]$ with $d<b$, then $\sigma_{c-1}=-1$ and $\sigma_d\ge -1$,
so $\sigma_{c-1}-\sigma_d\le 0=2C_{I,\hat J}$.
\end{itemize}

\medskip
\noindent\underline{\textbf{Case 2: $\hat J=[1:b]$, $b<n$.}}
\[
C_{I,\hat J}=
\begin{cases}
1,& I\subset \hat J,\\
-\tfrac12,& I=\hat J,\\
\tfrac12,& I=[1:d],\,d<b,\\
0,& I=[c:b],\,c>1.
\end{cases}
\]

\begin{itemize}
\item $I\subset \hat J$: same as Case 1.

\item $I=\hat J$: $\sigma_d=+1$, left term absent,
so $\sigma_{c-1}\one\{c\neq 1\}-\sigma_d\one\{d\neq n\}==-1=2C_{I,\hat J}$.

\item $I=[1:d]$, $d<b$:
$\sigma_{c-1}\one\{c\neq 1\}-\sigma_d\one\{d\neq n\}=-\sigma_d\le 1=2C_{I,\hat J}$.

\item $I=[c:b]$, $c>1$:
$\sigma_{c-1}\one\{c\neq 1\}-\sigma_d\one\{d\neq n\}=\sigma_{c-1}-1\le 0=2C_{I,\hat J}$.
\end{itemize}

\medskip
\noindent\underline{\textbf{Case 3: $\hat J=[a:n]$, $a>1$.}}

Symmetric to Case 2.

\medskip
\noindent\underline{\textbf{Case 4: $\hat J=[1:n]$.}}

\[
C_{I,\hat J}=
\begin{cases}
1,& I\subset \hat J,\\
0,& I=\hat J,\\
\tfrac12,& \text{otherwise}.
\end{cases}
\]

\begin{itemize}
\item $I\subset \hat J$: $\sigma_{c-1}\one\{c\neq 1\}-\sigma_d\one\{d\neq n\}\leq 2=2C_{I,\hat J}$.
\item $I=\hat J$: both terms absent, equality.
\item $I=[1:d],\;d<n$: $\sigma_{c-1}\one\{c\neq 1\}-\sigma_d\one\{d\neq n\}=-\sigma_d\le 1=2C_{I,\hat J}$.
\item $I=[c:n],\;c>1$: $\sigma_{c-1}-\sigma_d=\sigma_{c-1}\le 1=2C_{I,\hat J}$.
\end{itemize}

Thus, \eqref{eq:tv-bound-full-lemma} holds. 
\end{proof}

\begin{proof}[Proof of Lemma~\ref{lem:J_hat_in_tilde_I}]
    Let, if possible, there exist an $I=[c:d]\subseteq \hat{J}$ such that $u_{I,\hat{J}}=\tau|I|-2\lambda\,C_{I,\hat{J}}\geq |I|$.
    However, by the definition of $C_{I,\hat{J}}$, $$u_{I,\hat{J}}\geq |I| \Rightarrow C_{I,\hat{J}}<0\Rightarrow I=\hat{J} \text{ and } \hat{J}\neq [1:n].$$
    Define
\[
\theta(t):=\hat\theta^{\max}+t\,\one_{\hat{J}},\qquad t\ge 0,
\]
and $\phi(t):=F(\theta(t))$, where $F(\theta)$ is defined in~\eqref{eqn:objective_fn}. For $j\in \hat{J}$, write
\[
u_j(t):=y_j-\theta_j(t)=y_j-\hat\theta^{\max}_j-t.
\]
If $y_j>\hat\theta^{\max}_j$, then $u_j(0)>0$ and the sign remains positive for
$t<y_j-\hat\theta^{\max}_j$.
If $y_j=\hat\theta^{\max}_j$, then $u_j(t)=-t<0$ for all $t>0$.
If $y_j<\hat\theta^{\max}_j$, then $u_j(t)<0$ for all $t\ge 0$.
Hence, letting
\[
\eta:=\min_{j\in \hat{J}:\,y_j>\hat\theta^{\max}_j}
(y_j-\hat\theta^{\max}_j),
\]
(with $\eta=+\infty$ if the index set is empty), we conclude that $u_j(t)$ does not change sign for $t\in [0,\eta]$ for any $j \in \hat{J}.$ Thus, from the definition of $\rho_{\tau}$, it follows that each $\rho_\tau(u_j(t))$ is affine on $[0,\eta]$,
and so is their sum. 
Thus,
\[
\frac{d}{dt}\rho_\tau(u_j(t))\Big|_{0^+}
=
\begin{cases}
-\tau, & \hat\theta^{\max}_j<y_j,\\
1-\tau, & \hat\theta^{\max}_j\ge y_j.
\end{cases}
\]
Summing gives
\begin{equation}\label{eq:data-deriv-full}
\frac{d}{dt}\sum_{j=a}^b \rho_\tau(y_j-\theta_j(t))\Big|_{0^+}
=
-\tau|\hat{J}|+\#\{j\in \hat{J}:\hat\theta^{\max}_j\ge y_j\}.
\end{equation}

Now, we turn our attention to the penalty term. Only the two boundary edges across $\partial \hat{J}$ depend on $t$ in the TV term. We consider the three possible cases:
\begin{itemize}
    \item \underline{$\hat{J}\subseteq [2:n-1]$:} In this case, $C_{\hat{J},\hat{J}}=-1$. Now, for small enough $t$, it holds that
    \begin{align*}
        &TV(\theta(t))-TV(\theta(0))\\
        &=|\theta_a(t)-\theta_{a-1}(t)|+|\theta_{b+1}(t)-\theta_b(t)|-|\hat{\theta}^{\max}_a-\hat{\theta}^{\max}_{a-1}|-|\hat{\theta}^{\max}_{b+1}-\hat{\theta}^{\max}_b|\\
        &=|\hat\theta^{\max}_a-\hat\theta^{\max}_{a-1}+t|+|\hat\theta^{\max}_{b+1}-\hat\theta^{\max}_b-t|-|\hat{\theta}^{\max}_a-\hat{\theta}^{\max}_{a-1}|-|\hat{\theta}^{\max}_{b+1}-\hat{\theta}^{\max}_b|\\
        &=\hat\theta^{\max}_{a-1}-\hat\theta^{\max}_a-t+\hat\theta^{\max}_{b+1}-\hat\theta^{\max}_b-t+\hat{\theta}^{\max}_a-\hat{\theta}^{\max}_{a-1}-\hat{\theta}^{\max}_{b+1}+\hat{\theta}^{\max}_b=-2t=2C_{\hat{J},\hat{J}}t,
    \end{align*}
    where the third equality follows from the fact that $\hat{\theta}^{\max}_{a-1}<\hat{\theta}^{\max}_{a}$ and $\hat{\theta}^{\max}_{b}<\hat{\theta}^{\max}_{b+1}$ and for small enough $t>0$, $\hat\theta^{\max}_a-\hat\theta^{\max}_{a-1}+t$ (or $\hat\theta^{\max}_{b+1}-\hat\theta^{\max}_b-t$) is negative (or positive). 
   \item \underline{$\hat{J}=[1:b]$, where $b<n$:} In this case, $C_{\hat{J},\hat{J}}=-1/2$. Now, small enough $t$,
   \begin{align*}
        &TV(\theta(t))-TV(\theta(0))\\
        &=|\theta_{b+1}(t)-\theta_b(t)|-|\hat{\theta}^{\max}_{b+1}-\hat{\theta}^{\max}_b|\\
        &=|\hat\theta^{\max}_{b+1}-\hat\theta^{\max}_b-t|-|\hat{\theta}^{\max}_{b+1}-\hat{\theta}^{\max}_b|\\
        &=\hat\theta^{\max}_{b+1}-\hat\theta^{\max}_b-t-\hat{\theta}^{\max}_{b+1}+\hat{\theta}^{\max}_b=-t=2C_{\hat{J},\hat{J}}t.
    \end{align*}
    \item \underline{$\hat{J}=[a:n]$, where $1<a$:} Similar to the previous case, one can show again that
    \[TV(\theta(t))-TV(\theta(0))=2C_{\hat{J},\hat{J}}t,\]
    for small enough $t$.
\end{itemize}
Thus, combining all the cases, we conclude that $TV(\theta(t))$ is an affine map of $t$ for small enough $t>0$ and 
\begin{equation}\label{eq:tv-deriv-full}
    \frac{d}{dt}\left(\lambda\,TV(\theta(t))\right)=2\lambda\,C_{\hat{J},\hat{J}}.
\end{equation}
Therefore, combining \eqref{eq:data-deriv-full} and \eqref{eq:tv-deriv-full}, we obtain
\[\phi'_{+}(0)=-\tau|\hat{J}|+\#\{j\in \hat{J}:\hat\theta^{\max}_j\ge y_j\}+2\lambda C_{\hat{J},\hat{J}}\leq -u_{\hat{J},\hat{J}}+|\hat{J}|\leq 0,\]
where the last inequality follows from the assumption that $u_{\hat{J},\hat{J}}\geq |\hat{J}|.$ However, since $\hat\theta^{\max}$ minimizes $F$, $\phi'_+(0)\ge 0$,
hence $\phi'_+(0)=0$.
However, we have already argued that $\phi$ is affine on $[0,\eta_1]$ for some small enough $\eta_1>0$. Therefore, it must hold that $\phi(t)=\phi(0)$ for $t\in[0,\eta_1]$. Choose $\varepsilon\in(0,\eta_1]$ and define
$\tilde\theta=\hat\theta^{\max}+\varepsilon\one_I$.
Then $\tilde\theta\in\mathcal \cS$ and
$\tilde\theta_i>\hat\theta_i^{\max}$,
contradiction! Thus, the assumption of $u_{\hat{J},\hat{J}}\geq |\hat{J}|$ must be wrong and the proof of Lemma~\ref{lem:J_hat_in_tilde_I} is thus complete.
\end{proof}
\refstepcounter{section}
\section*{Appendix \thesection: Auxiliary Results Used in Section~\ref{sec:monotonicity}}\label{appnb}
\begin{lemma}\label{lem:qtl_linearity}
Define
\[Q_{\tau}(\theta):=\sum_{i=1}^{n}\rho_{\tau}(y_i-\theta_i).\]
    For $0\leq\tau_1,\tau_2\leq 1$, the following holds for any fixed $\theta \in \bR^n$:
    \[Q_{\tau_2}(\theta)-Q_{\tau_1}(\theta)=(\tau_2-\tau_1)\sum_{i=1}^n (y_i-\theta_i).\]
\end{lemma}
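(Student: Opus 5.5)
The identity is a pointwise statement about the check loss, so I will establish it for a single summand and then sum over $i$. Concretely, I will show that for every real $x$ and every $\tau\in[0,1]$,
\[
\rho_{\tau}(x)=\tau x+\max\{0,-x\}=\tau x+(x)_-,
\]
where $(x)_-:=\max\{0,-x\}$. To verify this, I use the definition $\rho_\tau(x)=\max\{\tau x,(\tau-1)x\}$ and split into two cases.

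\emph{Case $x\ge 0$.} Here $\tau x\ge(\tau-1)x$, so $\rho_\tau(x)=\tau x$. This equals $\tau x+(x)_-$ because $(x)_-=0$.

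\emph{Case $x<0$.} Here $(\tau-1)x\ge \tau x$, so $\rho_\tau(x)=(\tau-1)x=\tau x-x$. This equals $\tau x+(x)_-$ because $(x)_-=-x$.

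Note that the case comparison only uses the sign of $x$, not the value of $\tau$. Hence the representation holds for all $\tau\in[0,1]$, including the endpoints.

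Since $(x)_-$ does not depend on $\tau$, subtracting the representation at two levels gives
\[
\rho_{\tau_2}(x)-\rho_{\tau_1}(x)=(\tau_2-\tau_1)x .
\]
Applying this with $x=y_i-\theta_i$ and summing over $i\in[n]$ yields
\[
Q_{\tau_2}(\theta)-Q_{\tau_1}(\theta)=(\tau_2-\tau_1)\sum_{i=1}^n(y_i-\theta_i),
\]
which is the claim.

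There is no real obstacle here. The only step requiring any care is the sign case split that establishes the affine-in-$\tau$ decomposition of $\rho_\tau$.
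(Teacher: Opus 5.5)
Your proof is correct and follows essentially the paper's argument. Both split on the sign of $y_i-\theta_i$ to obtain $\rho_{\tau_2}(y_i-\theta_i)-\rho_{\tau_1}(y_i-\theta_i)=(\tau_2-\tau_1)(y_i-\theta_i)$ termwise and then sum; your decomposition $\rho_\tau(x)=\tau x+(x)_-$ is just a tidy repackaging of the same case check.
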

\begin{proof}
    \begin{align*}
        &\rho_{\tau_2}(y_i-\theta_i)-\rho_{\tau_1}(y_i-\theta_i)\\
        &=\begin{cases}
            (\tau_2-1)(y_i-\theta_i)-(\tau_1-1)(y_i-\theta_i) & \text{ if }y_i<\theta_i;\\
            \tau_2(y_i-\theta_i)-\tau_1(y_i-\theta_i) & \text{ if }y_i\geq\theta_i.
        \end{cases}\\
        &=(\tau_2-\tau_1)(y_i-\theta_i).
    \end{align*}
    Thus, the proof is complete by using the definition of $Q_{\tau}.$
\end{proof}
\begin{proof}[of Lemma~\ref{lemma:general_submodular}]
   Since submodularity is preserved under non-negative linear combinations, and we are given $w_{ij} \geq 0$, it suffices to prove that each individual term in the sum is submodular. That is, for any fixed pair $(i,j)$ with $i \neq j$ and any convex function $\phi$, we must show that $g(\theta) = \phi(\theta_i - \theta_j)$ is submodular. 

Let $x, y \in \bR^n$. We need to show:
\begin{equation*}
    \phi(x_i - x_j) + \phi(y_i - y_j) \geq \phi(\min(x_i, y_i) - \min(x_j, y_j)) + \phi(\max(x_i, y_i) - \max(x_j, y_j)).
\end{equation*}
To simplify notation, let $a_1 = x_i$, $a_2 = y_i$, $b_1 = x_j$, and $b_2 = y_j$. The inequality becomes:
\begin{equation}\label{eqn:submodularity_convex}
    \phi(a_1 - b_1) + \phi(a_2 - b_2) \geq \phi(\min(a_1, a_2) - \min(b_1, b_2)) + \phi(\max(a_1, a_2) - \max(b_1, b_2)).
\end{equation}
We proceed by analyzing the possible orderings of the pairs $(a_1, a_2)$ and $(b_1, b_2)$.
\\
\noindent \textbf{Case 1: }
Assume $(a_1 - a_2)(b_1 - b_2) \geq 0$. Without loss of generality, assume $a_1 \leq a_2$ and $b_1 \leq b_2$.
Then, $\min(a_1, a_2) = a_1$, $\min(b_1, b_2) = b_1$, $\max(a_1, a_2) = a_2$, and $\max(b_1, b_2) = b_2$.
Evaluating the right-hand side of \eqref{eqn:submodularity_convex}, we get:
\begin{equation*}
    \phi(a_1 - b_1) + \phi(a_2 - b_2).
\end{equation*}
This is exactly equal to the left-hand side. Thus, the inequality \eqref{eqn:submodularity_convex} holds with equality. 
\\
\noindent 
\textbf{Case 2:}
Assume $(a_1 - a_2)(b_1 - b_2) < 0$. Without loss of generality, assume $a_1 < a_2$ and $b_1 > b_2$. 
Then, $\min(a_1, a_2) = a_1$, $\max(a_1, a_2) = a_2$, $\min(b_1, b_2) = b_2$, and $\max(b_1, b_2) = b_1$.
The showing \eqref{eqn:submodularity_convex} becomes equivalent to showing:
\begin{equation} \label{eq:target_ineq}
    \phi(a_1 - b_1) + \phi(a_2 - b_2) \geq \phi(a_1 - b_2) + \phi(a_2 - b_1).
\end{equation}
To prove this using the convexity of $\phi$, let us define 
\begin{align*}
    &V = a_1 - b_1, \quad
    U = a_2 - b_2, \\
    &X = a_2 - b_1, \quad
    Y = a_1 - b_2.
\end{align*}
Notice that $X + Y = (a_2 - b_1) + (a_1 - b_2) = a_1 + a_2 - b_1 - b_2$, and $U + V = (a_2 - b_2) + (a_1 - b_1) = a_1 + a_2 - b_1 - b_2$. Thus, $X + Y = U + V$.

Since $a_1 < a_2$ and $b_2 < b_1$, we can strictly order these points:
\begin{align*}
    V &= a_1 - b_1 < a_2 - b_1 = X, \\
    V &= a_1 - b_1 < a_1 - b_2 = Y, \\
    X &= a_2 - b_1 < a_2 - b_2 = U, \\
    Y &= a_1 - b_2 < a_2 - b_2 = U.
\end{align*}
Therefore, both $X$ and $Y$ lie strictly in the interval $(V, U)$. We can express $X$ and $Y$ as convex combinations of $U$ and $V$. Let $t \in (0, 1)$ be defined as:
\begin{equation*}
    t = \frac{a_2 - a_1}{(a_2 - a_1) + (b_1 - b_2)}.
\end{equation*}
Note that $t > 0$ because $a_2 > a_1$ and $b_1 > b_2$. It is straightforward to verify that:
\begin{equation*}
    t U + (1-t) V = \frac{a_2 - a_1}{(a_2 - a_1) + (b_1 - b_2)}(a_2 - b_2) + \frac{b_1 - b_2}{(a_2 - a_1) + (b_1 - b_2)}(a_1 - b_1) = a_2 - b_1 = X.
\end{equation*}
By symmetry, since $X + Y = U + V$, we have:
\begin{equation*}
    (1-t) U + t V = Y.
\end{equation*}
Because $\phi$ is a convex function, we apply the definition of convexity to $X$ and $Y$:
\begin{align*}
    \phi(X) &= \phi(t U + (1-t) V) \leq t \phi(U) + (1-t) \phi(V), \\
    \phi(Y) &= \phi((1-t) U + t V) \leq (1-t) \phi(U) + t \phi(V).
\end{align*}
Adding these two inequalities together yields:
\begin{align*}
    \phi(X) + \phi(Y) &\leq [t + (1-t)] \phi(U) + [(1-t) + t] \phi(V) \\
    \phi(X) + \phi(Y) &\leq \phi(U) + \phi(V).
\end{align*}
Substituting the original variables back into the inequality, we get:
\begin{equation*}
    \phi(a_2 - b_1) + \phi(a_1 - b_2) \leq \phi(a_2 - b_2) + \phi(a_1 - b_1),
\end{equation*}
which is exactly the required inequality \eqref{eq:target_ineq}. Thus, the proof is complete.
\end{proof}
\refstepcounter{section}
\section*{Appendix \thesection: Pointwise Risk Bound for Quantile TVD}\label{appnc}
    \begin{proposition}\label{prop:ptwise_error_bound}
 Under the quantile sequence model with some $\tau \in (0,1)$, suppose Assumption \ref{ass:growth_condn} holds. Given any $c>1$, let $$C:=\frac{(c+3)\tau}{8c_1} \text{ and }C_1:=\frac{c+2}{2c_1^2\delta^2}.$$
   Then, there exists a large enough universal constant $\tilde{C}>0$ and a natural number $N_0$ (depends on $c,\, c_1,\,\tau$) such that for any $n\geq N_0$ and $C\log n\leq \lambda \leq n$, the following bounds hold for the estimator defined in \eqref{eqn:objective_fn} at \textit{all location} $i\in [n]$ \textit{simultaneously} with probability $\geq 1-4n^{-(c-1)}:$
    \[\tilde{L}_i\leq \hat{\theta}_i-\theta_i\leq \tilde{U}_i,\]
    where
    \begin{align*}
\tilde{L}_i &:= 
\begin{cases}
\displaystyle \max_{\substack{J \in \mathcal{I}_i: J \subseteq [2:n-1] \\ |J| > 4\lambda/(c_1\delta),\, \text{Dist}(i, \partial J) \ge C_1 \log n}} f\left(i,\,\tau,\, J,\, \theta^*,\,\lambda\right) & \text{if } i=\lceil C_1 \log n\rceil,\ldots, \lfloor n - C_1 \log n\rfloor; \\[2.5em]
\displaystyle \max_{\substack{J \in \mathcal{I}_i: J = [1:j_2] \\ |J| > 4\lambda/(c_1\delta),\, j_2 - i + 1 \ge C_1 \log n}} f\left(i,\,\tau,\, J,\, \theta^*,\,\lambda\right) & \text{if } i=1,\ldots,\lceil C_1 \log n\rceil; \\[2.5em]
\displaystyle \max_{\substack{J \in \mathcal{I}_i: J = [j_1:n] \\ |J| > 4\lambda/(c_1\delta),\, i - j_1 + 1 \ge C_1 \log n}} f\left(i,\,\tau,\, J,\, \theta^*,\,\lambda\right) & \text{if } i=\lfloor n - C_1 \log n \rfloor +1,\ldots,n;
\end{cases}
\end{align*}
\begin{align*}
\tilde{U}_i &:= 
\begin{cases}
\displaystyle \min_{\substack{J \in \mathcal{I}_i: J \subseteq [2:n-1] \\ |J| > 4\lambda/(c_1\delta),\, \text{Dist}(i, \partial J) \ge C_1 \log n}} g\left(i,\,\tau,\, J,\, \theta^*,\,\lambda\right) & \text{if } i=\lceil C_1 \log n\rceil,\ldots, \lfloor n - C_1 \log n\rfloor; \\[2.5em]
\displaystyle \min_{\substack{J \in \mathcal{I}_i: J = [1:j_2] \\ |J| > 4\lambda/(c_1\delta),\, j_2 - i + 1 \ge C_1 \log n}} g\left(i,\,\tau,\, J,\, \theta^*,\,\lambda\right) & \text{if } i=1,\ldots,\lceil C_1 \log n\rceil; \\[2.5em]
\displaystyle \min_{\substack{J \in \mathcal{I}_i: J = [j_1:n] \\ |J| > 4\lambda/(c_1\delta),\, i - j_1 + 1 \ge C_1 \log n}} g\left(i,\,\tau,\, J,\, \theta^*,\,\lambda\right) & \text{if } i=\lfloor n - C_1 \log n \rfloor +1,\ldots,n,
\end{cases}
\end{align*}
where $\mathcal{I}_i$ is the set of discrete sub-intervals of $[1:n]$ containing $i$, and $f,g$ are defined as
\[f\left(i,\,\tau,\, J,\, \theta^*,\,\lambda\right):=Bias_-(i,J,\theta^*)-SD^{1-\tau}(i,J,\lambda),\]
\[g\left(i,\,\tau,\, J,\, \theta^*,\,\lambda\right):=Bias_+(i,J,\theta^*)+SD^{\tau}(i,J,\lambda).\]
\end{proposition}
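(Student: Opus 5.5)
The plan is to start from the deterministic decompositions \eqref{eqn:bias_sd} and \eqref{eqn:bias_sd_2}, which hold for every minimizer by Theorem~\ref{thm:minmax_bound}. For each $i$ these reduce the claim to a uniform high-probability bound on the stochastic terms over all admissible outer intervals $J$ in the relevant class (interior $J\subseteq[2:n-1]$, or $J=[1:j_2]$, or $J=[j_1:n]$, according to the position of $i$). Concretely, I will show that with probability at least $1-2n^{-(c-1)}$, simultaneously over all $i$ and all admissible $J$ (those with $|J|>4\lambda/(c_1\delta)$ and the distance condition),
\[
\max_{I\subseteq J:\, i\in I}\epsilon_{I,(\lfloor u_{I,J}\rfloor+1)}\le SD^{\tau}(i,J,\lambda).
\]
The lower bound then follows by the reflection used in Step~\ref{itm:step_2}: replace $y,\tau$ by $-y,1-\tau$ and use \eqref{eqn:order_stat_neg_vector}. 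This accounts for the $SD^{1-\tau}$ appearing in $\tilde L_i$ and for the other $2n^{-(c-1)}$ of failure probability. Since there are at most $n^3$ triples $(i,I,J)$, it suffices to obtain a per-$(I,J)$ tail of order $n^{-(c+2)}$ and take a union bound.

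Fix $I\subseteq J$ with $i\in I$, and put $k=\lfloor u_{I,J}\rfloor+1$. For a threshold $t\in(0,\delta]$, Lemma~\ref{lem:orderstat} gives $\{\epsilon_{I,(k)}>t\}\subseteq\{\sum_{j\in I}\one\{\epsilon_j\le t\}\le k-1\}$. By Assumption~\ref{ass:growth_condn}, $\mathbb{E}\sum_{j\in I}\one\{\epsilon_j\le t\}\ge |I|(\tau+c_1t)$. Since $k-1\le \tau|I|-2\lambda C_{I,J}$, Hoeffding yields
\[
\bP(\epsilon_{I,(k)}>t)\le \exp\Bigl(-2\bigl(c_1 t|I|+2\lambda C_{I,J}\bigr)_+^2/|I|\Bigr).
\]
The analysis then splits by the value of $C_{I,J}$ in Definition~\ref{defn:C_{IJ}}.

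(a) If $C_{I,J}=1$ ($I$ strictly interior), the shift $2\lambda$ already helps. Then $\epsilon_{I,(k)}\le 0$ unless $c_1 t|I|$ is needed at all; the event has probability at most $\exp(-8\lambda^2/|I|)$ when $t=0$. For small $|I|\lesssim\lambda^2/\log n$ this is negligible. For larger $|I|$ I take $t\asymp\sqrt{\log n/|I|}$, but since $I\ni i$ is interior its length is unconstrained. The way out is to use the $2\lambda$ shift: choose $t=\tau\log n/\lambda$ and note $c_1t|I|+4\lambda\gtrsim\sqrt{|I|\log n}$ by AM--GM whenever $\lambda\ge C\log n$. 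This is exactly where the $\tau\log n/\lambda$ term comes from.

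(b) If $C_{I,J}\in\{0,1/2\}$, then $I$ shares an endpoint of $J$ and hence contains a stretch from $i$ to $\partial J$. So $|I|\ge \mathrm{Dist}(i,\partial J)\ge C_1\log n$, and $t=\sqrt{(c+2)\log n/(2c_1^2|I|)}\le\delta$ works by the choice of $C_1$, giving the $\sqrt{\log n/\mathrm{Dist}(i,\partial J)}$ term.

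(c) If $C_{I,J}<0$, i.e.\ $I=J$, the shift $-2\lambda|C_{I,J}|$ works against us. Here I take $t=\sqrt{\log n/|J|}+4\lambda/(c_1|J|)$, which is at most $\delta$ (up to constants) thanks to $|J|>4\lambda/(c_1\delta)$. This produces the $\lambda/|J|$ term.

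Taking $\tilde C$ large enough to dominate all three cases and unioning over $(i,I,J)$ gives the claimed probability. The boundary cases for $i$ near $1$ or $n$ differ only in that $\mathrm{Dist}$ is measured to the single interior endpoint of $J$.

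I expect the main obstacle to be case (a). There the stochastic term must be controlled uniformly over arbitrarily long interior $I$ using only the $2\lambda$ slack and the local linear CDF growth, which is valid only for $t\le\delta$. The delicate point is checking that $\lambda\ge C\log n$, with $C=(c+3)\tau/(8c_1)$, makes $t=\tau\log n/\lambda\le\delta$ and simultaneously gives exponent at least $(c+2)\log n$ for every $|I|$. I would first try minimizing $(c_1t|I|+2\lambda)^2/|I|\ge 8c_1t\lambda$ over $|I|$, which gives $8c_1\tau\log n$ and is consistent with the stated constant $C$.
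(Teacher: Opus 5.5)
Your route is the paper's: the min--max bounds \eqref{eqn:bias_sd}--\eqref{eqn:bias_sd_2}, reduction of $\{\epsilon_{I,(k)}>t\}$ to a binomial lower deviation, Hoeffding plus linear CDF growth, a split by the sign of $C_{I,J}$ that produces the three terms of $SD^{\tau}$, and reflection to $(-y,1-\tau)$ for $\tilde L_i$.

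The problem is the step you flag as delicate: it does not close with the stated constants. For $C_{I,J}=1$ and $t\in(0,\delta]$, Hoeffding and $(x+y)^2\ge 4xy$ give $\bP(\epsilon_{I,(k)}>t)\le \exp(-2(c_1t|I|+2\lambda)^2/|I|)\le \exp(-16c_1\lambda t)$, uniformly in $|I|$. With $t=K\tau\log n/\lambda$ the exponent is $16c_1K\tau\log n$. Reaching $(c+2)\log n$ therefore forces $t=\frac{c+2}{16c_1}\frac{\log n}{\lambda}$, so the factor $\tau$ cancels and $\tilde C$ would have to be at least $\frac{c+2}{16c_1\tau}$.

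Worse, $\lambda\ge C\log n$ with $C=\frac{(c+3)\tau}{8c_1}$ only gives $\tau\log n/\lambda\le \frac{8c_1}{c+3}$, which says nothing about $\delta$. For $t>\delta$, monotonicity yields only $F_k(t)-\tau\ge c_1\delta$, hence an exponent $16c_1\delta\lambda$. So the argument actually requires $\lambda\ge \frac{c+2}{16c_1\delta}\log n$ and a middle term of order $\frac{c+2}{c_1}\frac{\log n}{\lambda}$. Your remark that the minimization is ``consistent with the stated constant $C$'' is not correct.

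The paper's proof has the same hole at the same place. Its bound $\exp(-8\lambda c_1 t/\tau)$ does not follow from $(a+b)^2\ge 2ab$, which gives only $\exp(-8\lambda c_1 t)$. And $\lambda\ge C\log n$ makes $t_2=\frac{(c+3)\tau}{8c_1}\frac{\log n}{\lambda}\le 1$, not $\le\delta$. So the fix is to the constants in the statement rather than a missing idea of yours, but you should not present this step as verified.

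Two smaller points. First, in case (b) you group $C_{I,J}=\tfrac12$ with $C_{I,J}=0$ and assert $|I|\ge \mathrm{Dist}(i,\partial J)$. This is false at boundary locations. With $J=[1:j_2]$ one has $\mathrm{Dist}(i,\partial J)=j_2-i+1$, but the $C_{I,J}=\tfrac12$ intervals are $I=[1:d]$ with $i\le d<j_2$, which can have length $i$ (even $1$). Then your $t=\sqrt{(c+2)\log n/(2c_1^2|I|)}$ exceeds $\delta$ and the Hoeffding step is invalid. These intervals carry the favourable shift $+\lambda$ and must be treated as in case (a), which is what the paper does. So the boundary case is not ``only'' a matter of measuring $\mathrm{Dist}$ to a single endpoint.

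Second, triples $(i,I,J)$ with $i\in I\subseteq J$ number of order $n^5$, not $n^3$. Your union bound survives anyway. Each threshold you choose depends on a single interval ($I$ in cases (a)--(b), $J$ in case (c)) and on one of finitely many values of $C_{I,J}$, so only $O(n^2)$ distinct events arise. A per-event tail of $n^{-(c+2)}$ then gives failure probability $O(n^{-c})$, which is tidier than the paper's own probability bookkeeping.
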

\begin{proof}[of Proposition~\ref{prop:ptwise_error_bound}]
Note that~\eqref{eqn:bias_sd} implies that for any $J \in \I_i$,
\[\hat{\theta}_i-\theta^*\leq \underbrace{\max_{k \in J} (\theta^*_k-\theta^*_i)}_{Bias_{+}(i,J,\theta^*)}+\max_{I\subseteq J: i \in I}\,\epsilon_{I,(\lfloor u_{I,J}\rfloor+1)}.\]
We will consider some particular choices for $J$ depending on the location $i \in [n]$.\\
\\
\noindent \underline{$(a)\;i\in [\lceil C_1 \log n\rceil:\lfloor n-C_1\log n\rfloor]$, where $C_1>0$ is suitable large enough constant:}\\
\\
\noindent In this case, we only consider those $J\subseteq [2:n-1]$, such that $|J|>4\lambda/(c_1\delta)$ and $Dist(i,\partial J)\geq C_1 \log n$.
Note that
\begin{align*}
    \max_{I\subseteq J: i \in I}\,\epsilon_{I,(\lfloor u_{I,J}\rfloor+1)}&\leq \max_{I\subseteq J: i \in I,\,I\cap \partial J\neq \emptyset}\,\epsilon_{I,(\lfloor \tau |I|\rfloor+1)} 
    +\max_{I\subseteq J: i \in I,\;I\cap \partial J= \emptyset}\,\epsilon_{I,(\lfloor \tau |I|-2\lambda\rfloor+1)}
    +\epsilon_{I,(\lfloor \tau |J|+2\lambda\rfloor+1)}\\
    &= \max_{I\subseteq J: i \in I,\;I\cap \partial J\neq \emptyset}\,\epsilon_{I,(\lfloor \tau |I|\rfloor+1)} 
    +\max_{I\subseteq J: i \in I,\;I\cap \partial J= \emptyset}\,\epsilon_{I,(\lfloor \tau |I|-2\lambda\rfloor+1)}
    +\epsilon_{J,(\lfloor \tau |J|+2\lambda\rfloor+1)}.
\end{align*}
Now, for $t>0$, and $I\subseteq J$,
\begin{align}
    &\bP\left(\epsilon_{I,(\lfloor u_{I,J}\rfloor+1)}>t\right)\nonumber\\
    &=\bP\left(\sum_{k \in I} 1(\epsilon_k>t)\geq |I|-\lfloor u_{I,J}\rfloor\right)\nonumber\\
    &\leq \bP\left(\sum_{k \in I} 1(\epsilon_k>t)\geq |I|- u_{I,J}\right)\nonumber\\
    &=\bP\left(\sum_{k \in I} 1(\epsilon_k>t)\geq (1-\tau)|I|+2\lambda C_{I,J}\right)\nonumber\allowdisplaybreaks\\
    &=\bP\left(\sum_{k \in I} 1(\epsilon_k>t)-\sum_{k \in I} \Bar{F}_k(t)\geq (1-\tau)|I|+2\lambda C_{I,J}-\sum_{k \in I} \Bar{F}_k(t)\right)\nonumber\\
    &=\bP\left(\frac{1}{|I|}\sum_{k \in I} (1(\epsilon_k>t)-\Bar{F}_k(t))\geq \frac{2\lambda}{|I|} C_{I,J}+\frac{1}{|I|}\sum_{k \in I} (F_k(t)-\tau)\right)\label{eqn:tail_prob_bound}.
\end{align}
When $C_{I,J}=1$, i.e. $I \subset J$, and $0<t\leq \delta$, by Hoeffding inequality,~\eqref{eqn:tail_prob_bound} becomes 
\[\leq \exp{\left(-2|I|(2\lambda/|I|+\sum_{k \in I} (F_k(t)-\tau)/|I|)^2\right)}\leq\exp{\left(-2|I|[2\lambda/|I| +c_1t]^2\right)}\leq \exp{\left(-8\lambda c_1t/\tau\right)},\]
where the second inequality follows from Assumption~\ref{ass:growth_condn} and the third one follows from the inequality $(a+b)^2\geq 2ab$.\\
\noindent When $C_{I,J}=0$, i.e. $I$ shares one endpoint with $J$, if $0<t\leq \delta$, using Assumption~\ref{ass:growth_condn} and Hoeffding inequality, we can bound ~\eqref{eqn:tail_prob_bound} by
\begin{equation}\label{eqn:hpbd_boundary}
    \exp{\left(-2c_1^2|I|t^2\right)}\leq \exp{\left(-2c_1^2\,Dist(i,\partial J)\,t^2\right)}.
\end{equation}
Also, note that there are exactly $|J|$ many such $I$.
 Therefore, using union bound, for $0<t_1,t_2,t_3\leq \delta$, we have
\begin{align}\label{eqn:hpbd_risk}
    &\bP\left((\lfloor u_{I,J}\rfloor+1)-\text{th order statistic of }\epsilon_I>t_1+t_2+t_3\right)\nonumber\\
    &\leq |J| \exp{\left(-2c_1^2\,Dist(i,\partial J)\,t_1^2\right)}+\sum_{I:i\in I\subseteq J,\,I\cap \partial J= \emptyset}\exp{\left(-8\lambda c_1t/\tau\right)}\nonumber\allowdisplaybreaks\\
    &+\exp{\left(-2|J|\left[\frac{1}{|J|}\sum_{k \in J} (F_k(t_3)-\tau)-2\lambda\right]^2\right)}\nonumber\\
    &\leq |J| \exp{\left(-2c_1^2\,Dist(i,\partial J)\,t_1^2\right)}+|J|^2\exp{\left(-8\lambda c_1t/\tau\right)}\nonumber\\
    &+\bP\left(\frac{1}{|J|}\sum_{k \in J} (1(\epsilon_k>t_3)-\Bar{F}_k(t_3))\geq -\frac{2\lambda}{|J|} +\frac{1}{|J|}\sum_{k \in J} (F_k(t_3)-\tau)\right).
\end{align}
\noindent 
Now, we set $t_1=C_2 \left(\log n/Dist(i,\partial J)\right)^{1/2}$, where $C_2=\left((c+2)/(2c_1^2)\right)^{1/2}$ depends only on $c_1$ and $c$. Note that under the assumption $Dist(i,\partial\,J)\geq C_1 \log n$, where $C_1= \left((c+2)/(2c_1^2\delta^2)\right)$, $0<t_1\leq \delta$. Moreover, the first term in~\eqref{eqn:hpbd_risk} becomes \(\leq n^{-(c+1)}.\)\\
\noindent Next, we set $t_2=C_3\tau\log n/\lambda$, where $C_3=\left((c+3)/(8c_1)\right)$. Thus, the second term in \eqref{eqn:hpbd_risk} becomes \(\leq n^{-(c+1)}\). Note that choosing $\lambda \geq C \log n$ where $C=\left((c+3)\tau/(8c_1)\right)$, the above choice of $t_2$ can be made $\leq \delta.$\\
\noindent Finally, when $C_{I,J}=-1$, i.e. $I=J$, we set
\[t_3:=\frac{2\lambda}{c_1|J|}+\left(\frac{c+1}{2c_1^2}\frac{\log n}{ |J|}\right)^{1/2}.\]
Now, we assumed in this case  that \(|J|>4\lambda/c_1\delta\). Then, we have \(2\lambda/(c_1|J|)\leq \delta/2\). Moreover, the second term is already $\leq \delta/2$ because $C_1= \frac{c+2}{2c_1^2\delta^2}\geq \frac{c+1}{4c_1^2\delta^2}$. Therefore, $0<t_3\leq \delta$. Now, under Assumption~\ref{ass:growth_condn}, we have
    \[\frac{1}{|J|}\sum_{k \in J} (F_k(t_3)-\tau)-\frac{2\lambda}{|J|}\geq \frac{2\lambda}{|J|}+\left((c+1)\frac{\log n}{2|J|}\right)^{1/2}-\frac{2\lambda}{|J|}=\left((c+1)\frac{\log n}{2|J|}\right)^{1/2}>0.\]
    Thus, by Hoeffding inequality, we can bound~\eqref{eqn:tail_prob_bound} in this case by
\[\exp{\left(-2(c+1)|J|\frac{\log n}{2|J|}\right)}=n^{-(c+1)}.\]
 Now, since $Dist(i,\partial J)\leq |J|$,  we conclude that for such a $J$ containing $i$, the following bound holds with probability $\geq 1-3n^{-c}:$
 \[ \max_{I\subseteq J: i \in I}\,\epsilon_{I,(\lfloor u_{I,J}\rfloor+1)}\leq SD^{\tau}(i,J,\lambda),\]
 for a large enough universal constant $\tilde{C}>0.$ Finally, we point out that with the above choices of the constants, at least one $J$ always exists satisfying the required conditions if we take $n$ to be large enough. Thus, the upper bound is not vacuous.
 \\
 \\
\noindent \underline{$(b)\;i\in [1:\lceil C_1 \log n\rceil]$, where $C_1>0$ is suitable large enough constant:}\\
\\
\noindent In this case, consider $J$ of the form $[1:j_2]$, where $j_2\geq i$ and $j_2-i+1\geq C_1\, \log n.$ We would again like to show that with probability $\geq 1-4n^{-c}$, 
\[ \max_{I\subseteq J: i \in I}\,\epsilon_{I,(\lfloor u_{I,J}\rfloor+1)}\leq SD^{\tau}(i,J,\lambda),\]
for a large enough universal constant $\tilde{C}>0.$ By Definition~\ref{defn:C_{IJ}}, \(C_{I,J}\) takes four possible values here. Note that the cases when $C_{I,J}\neq 0$ can be handled in a similar fashion as we did in case $(a)$. So, we only focus on the subcase where $C_{I,J}=0$. This happens when $I=[s:j_2]$, $1<s\leq i .$ Using \eqref{eqn:hpbd_risk}, similar to \eqref{eqn:hpbd_boundary}, we would again get
\[\bP\left(\epsilon_{I,(\lfloor u_{I,J}\rfloor+1)}>t\right)\leq  \exp{\left(-2c_1^2|I|t^2\right)}\leq  \exp{\left(-2c_1^2(j_2-i+1)t^2\right)},\]
if $0<t\leq \delta.$  There are exactly $i$ many such $I$. Thus, we need to find $0<t\leq \delta$ such that
\[n\exp{\left(-2c_1^2(j_2-i+1) t^2\right)}\leq n^{-(c+1)}.\]
Solving for $t$, we get $t= \left((c+2)\log n/(2c_1^2(j_2-i+1))\right)^{1/2}.$
Now, since $j_2-i+1=Dist(i,\partial J)\geq C_1 \log n$ with $C_1=(c+2)/(2c_1^2\delta^2)$, the above $t$ can be made smaller than $\delta$. Thus, the upper bound in Case $(b)$ is established.\\
 \\
\noindent \underline{$(c)\;i\in [\lfloor n-C_1 \log n\rfloor+1:n]$, where $C_1>0$ is suitable large enough constant:}\\
\\
\noindent The proof in this case is very similar to Case $(b)$.\\
\\
\noindent Now, going back to \eqref{eqn:bias_sd}, we have
\begin{align*}
    &\hat{\theta}_i-\theta^*_i\\
    &\leq  \min_{J \in \I_i}\left[Bias_+(i,J,\theta^*)+\max_{I\subseteq J: i \in I}\,\epsilon_{I,(\lfloor u_{I,J}\rfloor+1)}\right]\allowdisplaybreaks\\
    &\leq \one \{i\in [\lceil C_1\,\log n\rceil:\,\lfloor n-C_1\,\log n\rfloor]\}\, \min_{\substack{J \in \I_i:J\subseteq [2:n-1]\\ |J|>4\lambda/(c_1\delta),\,Dist(i,\partial\,J)\geq C_1 \log n\,}}\left[Bias_+(i,J,\theta^*)+\max_{I\subseteq J: i \in I}\,\epsilon_{I,(\lfloor u_{I,J}\rfloor+1)}\right]\allowdisplaybreaks\\
    &+\one \{i\in [1:\lceil C_1\,\log n\rceil]\}\, \min_{\substack{J \in \I_i:J= [1:j_2]\\ |J|>4\lambda/(c_1\delta),\,j_2-i+1\geq C_1 \log n\,}}\left[Bias_+(i,J,\theta^*)+\max_{I\subseteq J: i \in I}\,\epsilon_{I,(\lfloor u_{I,J}\rfloor+1)}\right]\\
    &+\one \{i\in [\lfloor n-C_1\,\log n\rfloor:n]\}\, \min_{\substack{J \in \I_i:|J|>4\lambda/(c_1\delta),\,J= [j_1:n]\\ |J|>4\lambda/(c_1\delta),\,i-j_1+1\geq C_1 \log n\,}}\left[Bias_+(i,J,\theta^*)+\max_{I\subseteq J: i \in I}\,\epsilon_{I,(\lfloor u_{I,J}\rfloor+1)}\right]\\
     &\leq \one \{i\in [\lceil C_1\,\log n\rceil:\,\lfloor n-C_1\,\log n\rfloor]\}\, \min_{\substack{J \in \I_i: J\subseteq [2:n-1]\\ |J|>4\lambda/(c_1\delta),\,Dist(i,\partial\,J)\geq C_1 \log n\,}}\left[Bias_+(i,J,\theta^*)+SD^{\tau}(i,J,\lambda)\right]\\
    &+\one \{i\in [1:\lceil C_1\,\log n\rceil]\}\, \min_{\substack{J \in \I_i:J= [1:j_2]\\ |J|>4\lambda/(c_1\delta),\;j_2-i+1\geq C_1 \log n\,}}\left[Bias_+(i,J,\theta^*)+SD^{\tau}(i,J,\lambda)\right]\\
    &+\one \{i\in [\lfloor n-C_1\,\log n\rfloor:n]\}\, \min_{\substack{J \in \I_i:J= [j_1:n]\\ |J|>4\lambda/(c_1\delta),,\;i-j_1+1\geq C_1 \log n\,}}\left[Bias_+(i,J,\theta^*)+SD^{\tau}(i,J,\lambda)\right],\\
\end{align*}
where the last bound holds with probability $\geq 1-4 n^{-c}$ by using Case $(a)-(c)$ and union bound over different choices of $J$. Thus, the upper bound in the proposition holds simultaneously for $i\in [n]$ with probability $\geq 1-4n^{-(c-1)}$ by applying union bound. \\
\\
For the lower bound on $\hat{\theta}_{i}-\theta^*_i$, we resort to the inequality in \eqref{eqn:bias_sd_2}. Note that it suffices to establish high probability lower bound on the stochastic term involved in \eqref{eqn:bias_sd_2}. Now, for any $t>0$,
\begin{align*}
    \min_{I\subseteq J: i \in I}\,\epsilon_{I,(\lceil l_{I,J}\rceil)}\leq -t&\Rightarrow -\min_{I\subseteq J: i \in I}\,\epsilon_{I,(\lceil l_{I,J}\rceil)}\geq t\\
    &\Rightarrow \max_{I\subseteq J: i \in I}\,-\epsilon_{I,(\lceil l_{I,J}\rceil)}\geq t\\
    &\Rightarrow \max_{I\subseteq J: i \in I}\,(-\epsilon)_{I,(|I|-\lceil l_{I,J}\rceil+1)}\geq t,
\end{align*}
where the last line follows from \eqref{eqn:order_stat_neg_vector}. Now, we argue that $|I|-\lceil l^{\tau}_{I,J}\rceil=\lfloor |I|-l^{\tau}_{I,J}\rfloor=\lfloor u^{1-\tau}_{I,J}\rfloor.$ The second equality is obvious. To see why the first equality holds, let $k$ be an integer such that $k<l_{I,J}\leq k+1$. Thus, $\lceil l_{I,J}\rceil=k+1$. Moreover, $|I|-k-1\leq |I|-l_{I,J}<|I|-k$ and since $|I|$ is an integer, we conclude that $\lfloor |I|- l_{I,J}\rfloor=|I|-k-1$. Thus,
\[\min_{I\subseteq J: i \in I}\,\epsilon_{I,(\lceil l_{I,J}\rceil)}\leq -t\Rightarrow \max_{I\subseteq J: i \in I}\,(-\epsilon)_{I,(\lfloor \lfloor u^{1-\tau}_{I,J}\rfloor)}\geq t.\]
Thus, this is equivalent to proving high probability upper bound on the pointwise estimation error, with the quantile level being $1-\tau$ and the errors being $-\epsilon_k$ s. Since CDF of $-\epsilon_k$ is $G_k(t)=1-F_k(-t),$ Assumption~\ref{ass:growth_condn} holds for $G_k$ s with same constants $c_1$ and $\delta$. Thus, one can now complete the proof by using the same argument we used in deriving the upper bound.
 \end{proof}
\end{appendix}
\end{document}